\documentclass[11pt,reqno,tbtags,a4paper]{amsart}
\usepackage{amssymb}
\usepackage{xpunctuate}
\usepackage{url}
\usepackage[square,numbers]{natbib}
\bibpunct[, ]{[}{]}{;}{n}{,}{,}

\title
{Quasi-random graphs, subgraph counts and graph limits, again}  

\date{18 June, 2026}

\dedicatory{Dedicated to the memory of Vera T. S\'os}

\author{Svante Janson}
\thanks{Supported by the Knut and Alice Wallenberg Foundation
and
the Swedish Research Council
}
\address{Department of Mathematics, Uppsala University, PO Box 480,
SE-751~06 Uppsala, Sweden}
\email{svante.janson@math.uu.se}
\newcommand\urladdrx[1]{{\urladdr{\def~{{\tiny$\sim$}}#1}}}
\urladdrx{http://www2.math.uu.se/~svantejs}

\subjclass[2020]{} 

\numberwithin{equation}{section}

\renewcommand\le{\leqslant}
\renewcommand\ge{\geqslant}

\allowdisplaybreaks

\theoremstyle{plain}
\newtheorem{theorem}{Theorem}[section]
\newtheorem{lemma}[theorem]{Lemma}
\newtheorem{prop}[theorem]{Proposition}
\newtheorem{corollary}[theorem]{Corollary}
\newtheorem{conj}[theorem]{Conjecture}

\theoremstyle{definition}

\newcommand\xqed[1]{%
    \leavevmode\unskip\penalty9999 \hbox{}\nobreak\hfill
    \quad\hbox{#1}}

\newtheorem{exampleqqq}[theorem]{Example}
\newenvironment{example}{\begin{exampleqqq}}
  {\xqed{$\triangle$}\end{exampleqqq}}

\newtheorem{remarkqqq}[theorem]{Remark}
\newenvironment{remark}{\begin{remarkqqq}}
  {\xqed{$\triangle$}\end{remarkqqq}}

\newtheorem{definition}[theorem]{Definition}
\newtheorem{assumptions}[theorem]{Assumptions}
\newtheorem{problem}[theorem]{Problem}

\theoremstyle{remark}

\newenvironment{acks}{\section*{Acknowledgement}}{}

\newcounter{dummy}
\makeatletter
\newcommand\myitem[1][]{\item[#1]\refstepcounter{dummy}\def\@currentlabel{#1}}
\makeatother

\newenvironment{romenumerate}[1][-10pt]{
\addtolength{\leftmargini}{#1}\begin{enumerate}
 \renewcommand{\labelenumi}{\textup{(\roman{enumi})}}%
 \renewcommand{\theenumi}{\labelenumi}%
 }{\end{enumerate}}

\newenvironment{PXenumerate}[1]{
\begin{enumerate}
 \renewcommand{\labelenumi}{\textup{(#1\arabic{enumi})}}%
 \renewcommand{\theenumi}{\labelenumi}%
 }{\end{enumerate}}

\newcounter{oldenumi}
{\setcounter{oldenumi}{\value{enumi}}
\begin{romenumerate} \setcounter{enumi}{\value{oldenumi}}}
{\end{romenumerate}}

\newcounter{thmenumerate}

\newcounter{xenumerate}   

\newcommand\pfitemx[1]{\par#1:}
\newcommand\pfitemref[1]{\pfitemx{\ref{#1}}}

\newcommand\pfcase[2]{\smallskip\noindent\emph{Case #1: #2} \noindent}

\newcommand{\refT}[1]{Theorem~\ref{#1}}

\newcommand{\refC}[1]{Corollary~\ref{#1}}
\newcommand{\refCs}[1]{Corollaries~\ref{#1}}
\newcommand{\refL}[1]{Lemma~\ref{#1}}

\newcommand{\refR}[1]{Remark~\ref{#1}}

\newcommand{\refS}[1]{Section~\ref{#1}}
\newcommand{\refSs}[1]{Sections~\ref{#1}}
\newcommand{\refSS}[1]{Section~\ref{#1}}
\newcommand{\refP}[1]{Proposition~\ref{#1}}

\newcommand{\refD}[1]{Definition~\ref{#1}}
\newcommand{\refDs}[1]{Definitions~\ref{#1}}
\newcommand{\refE}[1]{Example~\ref{#1}}
\newcommand{\refEs}[1]{Examples~\ref{#1}}

\newcommand{\refApp}[1]{Appendix~\ref{#1}}
\newcommand{\refAss}[1]{Assumptions~\ref{#1}}
\newcommand{\refConj}[1]{Conjecture~\ref{#1}}

\begingroup
  \divide\count255 by 60
  \multiply\count255 by -60
  \advance\count255 by \time
  \ifnum \count255 < 10 \xdef\klockan{\the\count1.0\the\count255}
  \else\xdef\klockan{\the\count1.\the\count255}\fi
\endgroup

\DeclareMathOperator*{\sumx}{\sum\nolimits^{*}}

\newcommand{\sumim}{\sum_{i=1}^m}

\newcommand{\sumir}{\sum_{i=1}^r}
\newcommand{\sumiri}{\sum_{i=1}^{r-1}}
\newcommand{\sumjr}{\sum_{j=1}^r}

\newcommand\set[1]{\ensuremath{\{#1\}}}
\newcommand\bigset[1]{\ensuremath{\bigl\{#1\bigr\}}}
\newcommand\Bigset[1]{\ensuremath{\Bigl\{#1\Bigr\}}}

\newcommand\xpar[1]{(#1)}
\newcommand\bigpar[1]{\bigl(#1\bigr)}
\newcommand\Bigpar[1]{\Bigl(#1\Bigr)}
\newcommand\biggpar[1]{\biggl(#1\biggr)}
\newcommand\lrpar[1]{\left(#1\right)}

\newcommand\cpar[1]{\{#1\}}

\def\rompar(#1){\textup(#1\textup)}    
\newcommand\xfrac[2]{#1/#2}

\newcommand\parfrac[2]{\lrpar{\frac{#1}{#2}}}

\newcommand\innprod[1]{\langle#1\rangle}

\def\xexp(#1){e^{#1}}

\newcommand\floor[1]{\lfloor#1\rfloor}

\newcommand\ntoo{\ensuremath{{n\to\infty}}}
\newcommand\Ntoo{\ensuremath{{N\to\infty}}}

\newcommand\bmin{\land}

\newcommand\norm[1]{\lVert#1\rVert}

\newcommand\punkt{\xperiod}    

\newcommand{\aex}{a.e\punkt}

\newcommand\ii{\mathrm{i}}

\newcommand{\tend}{\longrightarrow}

\newcommand\LLto{\overset{L^2}{\tend}}

\newcommand\bbR{\mathbb R}
\newcommand\bbC{\mathbb C}
\newcommand\bbN{\mathbb N}

\newcommand\bbZ{\mathbb Z}

\newcounter{CC}
\newcounter{cc}

\newcommand\Tr{\operatorname{Tr}}

\newcommand\ga{\alpha}
\newcommand\gb{\beta}
\newcommand\gd{\delta}

\newcommand\gf{\varphi}

\newcommand\gs{\sigma}

\newcommand\eps{\varepsilon}
\renewcommand\phi{\xxx}  

\newcommand\cD{\mathcal D}
\newcommand\cE{\mathcal E}

\newcommand\cK{\mathcal K}

\newcommand\cM{\mathcal M}

\newcommand\cP{\mathcal P}

\newcommand\cX{{\mathcal X}}

\newcommand\tG{\widetilde G}

\newcommand\tP{\widetilde P}

\newcommand\bU{\overline U}

\newcommand\indic[1]{\boldsymbol1\cpar{#1}}

\newcommand\etta{\boldsymbol1}

\newcommand\qw{^{-1}}

\newcommand\qq{^{1/2}}

\newcommand\intoi{\int_0^1}

\newcommand\oi{\ensuremath{[0,1]}}

\newcommand\oio{\ensuremath{[0,1)}}

\newcommand\dd{\,\mathrm{d}}

\newcommand\lhs{left-hand side}
\newcommand\rhs{right-hand side}

\newcommand\gnq{\ensuremath{(G_n)}}

\newcommand\fS{\mathfrak{S}}
\newcommand\tfS{\widetilde{\mathfrak{S}}}
\newcommand\GG{\mathfrak{G}}
\newcommand\GL{\mathsf{GL}}
\newcommand\tgs{\widetilde\gs}

\newcommand\chimle[1]{\chi_{m,\le#1}}
\newcommand\chix{\widehat\chi}
\newcommand\ZZq{\binom}
\newcommand\ZZ[2]{\ZZq{[#1]}{#2}}
\newcommand\tZZq{\tbinom}
\newcommand\tZZ[2]{\tZZq{[#1]}{#2}}
\newcommand{\stirling}[2]{\genfrac{ \{ }{ \} }{0pt}{}{#1}{#2}}

\newcommand\NN{^{(N)}}
\newcommand\Nx[1]{[N]^{#1}}
\newcommand\Nm{\Nx{m}}
\newcommand\Nk{\Nx{k}}
\newcommand\nm{[n]^{m}}
\newcommand\sym{\textsf{s}}

\newcommand\hf{\widehat f}
\newcommand\bi{\mathbf{i}}
\newcommand\bx{x}
\newcommand\by{y}
\newcommand\rhox[1]{[\rho(#1)]}
\newcommand\rhof{\rhox{f}}

\newcommand\comp{^{\mathsf{c}}}
\newcommand\normm[1]{\norm{#1}_2}
\newcommand\njgs{n_j(\gs)}
\newcommand\qchi{{\chi^\ast}}
\newcommand\qrho{{\rho^\ast}}

\newcommand\Rchixm{R_{\chix_m}}

\newcommand\DNm{D_{N,m}}
\newcommand\Dnm{D_{n,m}}
\newcommand\DcNx[1]{D\comp_{N,#1}}
\newcommand\DcNm{D\comp_{N,m}}
\newcommand\DcNk{D\comp_{N,k}}
\newcommand\cDNm{\cD_{N,m}}
\newcommand\cDcNm{\cD\comp_{N,m}}

\newcommand\ettDcNm{\etta_{\DcNm}}
\newcommand\ettDcNmi{\etta_{\DcNx{m-1}}}
\newcommand\ettDcNk{\etta_{\DcNk}}
\newcommand\ettcDNm{\etta_{\cDNm}}

\newcommand\ettcDcNm{\etta_{\cDcNm}}
\newcommand\pqr{$p$-\qr}

\newcommand\qr{quasi-random}

\newcommand\ff{{|F|}}
\newcommand\ffm{m}

\newcommand\prodik{\prod_{i=1}^k}
\newcommand\prodim{\prod_{i=1}^m}

\newcommand\prodir{\prod_{i=1}^r}

\newcommand\tN{\widetilde N}
\newcommand\uumxgs{U_{\gs(1)}\times\dots\times U_{\gs(m)}}
\newcommand\qrp{\qr{} property}

\newcommand\pqrp{\pqr{} property}

\newcommand\cPm{\cP_{\mmr}}
\newcommand\tcP{\widetilde\cP}
\newcommand\tcPm{\widetilde\cP_{\mmr}}

\newcommand\hh{\widehat h}

\newcommand\aam{A_1,\dots,A_m}

\newcommand\aamr{{A_1^{m_1}\times\dots\times A_r^{m_r}}}
\newcommand\AAr{A_1,\dots,A_r}

\newcommand\gaam{\ga_1,\dots,\ga_m}
\newcommand\gaar{\ga_1,\dots,\ga_r}

\newcommand\uum{U_1,\dots,U_m}
\newcommand\uumx{U_1\times\dots\times U_m}
\newcommand\uur{U_1,\dots,U_r}
\newcommand\mmr{{m_1,\dots,m_r}}
\newcommand\emmr{e^{(m_1,\dots,m_r)}}
\newcommand\uumr{U_1^{m_1}\times\dots\times U_r^{m_r}}

\newcommand\xxf{x_1,\dots,x_\ff}
\newcommand\xxm{x_1,\dots,x_m}

\newcommand\zzm{z_1,\dots,z_m}
\newcommand\zzr{z_1,\dots,z_r}

\newcommand\tPsi{\widetilde\Psi}

\newcommand\psifx[1]{\Psi_{F,#1}}
\newcommand\psifw{\psifx W}

\newcommand\tpsifx[1]{\widetilde\Psi_{F,#1}}
\newcommand\tpsifw{\tpsifx W}

\newcommand\mpx{measure-preserving}
\newcommand\mpp{\mpx}

\newcommand\sC{\mathsf{C}}
\newcommand\sK{\mathsf{K}}
\newcommand\tensor{\otimes}
\newcommand\refTK[1]{\refT{TK}\ref{TKii}\ref{#1}}
\newcommand\restr[1]{|{#1}}
\newcommand\PhiN{\Phi\NN}
\newcommand\fx{f_\ast}
\newcommand\Tnni{T_{N,n,\bi}\,}

\newcommand\Lm{L^{2}(\oi^m)}
\newcommand\Lsm{L^{2,\mathsf s}(\oi^m)}
\newcommand\Lmx[1]{L^{2}_{#1}(\oi^m)}
\newcommand\Lmk{\Lmx{k}}
\newcommand\Lmlex[1]{\Lmx{\le #1}}
\newcommand\Lmlek{\Lmlex{k}}
\newcommand\Lsmx[1]{L^{2,\mathsf{s}}_{#1}(\oi^m)}
\newcommand\Lsmk{\Lsmx{k}}
\newcommand\Lsmlex[1]{\Lsmx{\le #1}}
\newcommand\Lsmlek{\Lsmlex{k}}

\newcommand\Ls{L^{2,\sym}}
\newcommand\Lx[1]{L^{2}_{#1}}

\newcommand\Llex[1]{\Lx{\le #1}}

\newcommand\Lsx[1]{L^{2,\sym}_{#1}}
\newcommand\Lsk{\Lsx{k}}
\newcommand\Lslex[1]{\Lsx{\le #1}}
\newcommand\Lslek{\Lslex{k}}

\newcommand\lls{\ell^{2,\mathsf s}}
\newcommand\llslex[1]{\ell^{2,\mathsf s}_{\le#1}}
\newcommand\llslek{\llslex{k}}

\newcommand\tell{\widetilde{\ell}}

\newcommand\llll{\ell^2}

\newcommand\CS{Cauchy--Schwarz}
\newcommand\CSineq{\CS{} inequality}

\newcommand{\Lovasz}{Lov\'asz}

\begin{document}

\begin{abstract} 
We study properties of graphs (or rather graph sequences)
saying that some restricted count of subgraphs is approximatively
what is expected in a random graph.
It has been shown by several authors that many such properties 
characterize quasi-random graphs, but there are also some exceptions.
We continue here the line of investigation in Janson and S\'os (2013),
and introduce some new versions of these properties, in order to better
understand why many of these properties are quasi-random, and to understand
the structure of the exceptions that are not.
A new feature in the proofs is a simple decomposition of the subspace of
symmetric functions in $L^2([0,1]^m)$ into subspaces that are irreducible
for the action of measure-preserving transformations of $[0,1]$;
this simplifies some arguments and gives structure to others.
\end{abstract}

\maketitle

\section{Introduction}\label{S:intro}

My only paper with Vera T. S\'os \cite{SJ294} was about
quasi-random graphs and graph limits, and more precisely 
about some properties of subgraph counts that were shown to characterize 
quasi-random graphs. We left also a number of open problems and conjectures.
Some of these were answered by \citet{Hatami2Li}.
Here we consider the same type of properties again, and obtain further results
extending and complementing results in \cite{SJ294} and \cite{Hatami2Li}.
There are still cases left open and new open problems and conjectures;
we hope that the method and point of view in the present paper can contribute
to the understanding of this type of properties.

We assume that the reader has some familiarity with \qr{} graphs, but we
repeat some basic facts.
Recall that \emph{\qr} is an asymptotic property, so it really is a property
of sequences $(G_n)_n$ of graphs (with $|G_n|\to\infty$ as \ntoo).
\citet{Thomason87a,Thomason87b} and \citet{ChungGW:quasi} showed
that a number of different ``random-like'' properties of 
a graph sequence $(G_n)$
are equivalent, and we say that \gnq{} is \emph{\qr}, or more precisely
\emph{\pqr}, if 
it satisfies these properties. (Here $p\in\oi$ is a
parameter.)
We further say that a property of
sequences $(G_n)_n$ of graphs (with $|G_n|\to\infty$) is a 
\emph{\qrp} (or  a \emph{\pqrp}) if it 
characterizes \qr{} (or \pqr) sequences of graphs.
Many \qr{} properties of very different types
have  been added by various authors. 

When the theory of graph limits was introduced by
\citet{LSz} and \citet{BCLSV1,BCLSV2}, 
it became clear that 
a sequence $(G_n)_n$ of graphs is \pqr{} if and only if $G_n\to p$
in the sense of graph limits, where $p$ denotes the graphon that is constant
$p$ \cite{LSz}.
Both the present paper and \cite{SJ294} are based  on this relation.

In the present paper we consider \qr{} properties 
that are stated in terms of subgraph counts
(typically with some restrictions). 
Properties of this type correspond in great generality to corresponding
properties of graphons, such that the question whether a given property 
of graph sequences is \qr{} 
can be translated to the question whether the corresponding property of 
graphons characterizes constant graphons.
This has been a fundamental idea in \cite{SJ234},
\cite{SJ294}, and \cite{Hatami2Li}. 
In the present paper we study some further properties of subgraph counts
and the corresponding properties of graphons. 

It was noted in \cite[Remark 9.6]{SJ294} that
one ingredient in several proofs there was the study of some
subspaces of $L^2(\oi^m)$ 
that are invariant under all measure-preserving bijections of $\oi$ onto itself,
and in particular some such subspaces consisting of symmetric functions only.
It was conjectured that all such subspaces 
are direct sums of some set of certain explicit spaces.
The main purpose of the present paper is to prove this conjecture from
\cite{SJ294} (\refT{T1} below), and to combine it with tools from previous works
to obtain new results on \qr{} properties.

The proof of \refT{T1} uses group representation theory, and in particular some
representations of the finite symmetric groups on finite-dimensional subspaces
of $L^2(\oi)^m$, which intuitively
may be regarded as approximations of the
infinite-dimensional representation of the infinite group $\GG$
of \mpp{} bijections.
(See \refApp{Arep} for a summary of relevant representation theory.)

\refT{T1} helps us to analyze our main problem,
i.e., whether certain subgraph count properties are \qr, by allowing us to
consider these irreducible spaces of functions only.
In particular, this makes it possible to split the problem whether the
properties studied here are \qr{} into two parts that can be studied
separately, see \refR{Rsplit}.
Nevertheless, as seen in \cite{SJ294} and \cite{Hatami2Li},
while this in many cases leads to a simple proof that a certain property is
\qr, there are also cases where 
this only leads to a reformulation as a non-trivial algebraic question;
this question was solved in some cases in \cite{SJ294}, 
but remains open in other cases.
We do not solve this algebraic problem here; on the contrary, our analysis
below adds in some cases  new versions of it that are open.
(See \refS{Sbad}.)
Another purpose of the present paper is to draw attention to these problems,
and to stimulate research on them and on what they say about the 
general structure of the type of \qr{} properties studied here.

Some further comments and open problems are given in \refS{Sfurther}.

\section{Notation and some background}\label{Smain}

\subsection{General notation}\label{SSnot}
We let $p\in(0,1)$ be a fixed parameter, usually omitted from the notation.
We denote the number of elements of a finite set $F$ by $|F|$.

All graphs in this paper are finite, undirected and simple.
The vertex and edge sets of a graph $G$ are denoted by
$V(G)$ and $E(G)$. We write $|G|:=|V(G)|$ for the
number of vertices of $G$, and $e(G):=|E(G)|$ for the
number of edges.

We let 
$\bbN:=\set{1,2,\dots}$ and,
as usual, $[n]:=\set{1,\dots,n}$.
For any finite set $S$,
we let $\ZZq Sm$ denote the set of all $\binom {|S|}m$ subsets of $S$ of
size $m$. 

For a vector $x=(x_1,\dots,x_m)$
and a subset $I=\set{i_1,\dots,i_k}\subseteq[m]$,
we write
\begin{align}\label{da3}
  x_I:=\xpar{x_{i_1},\dots,x_{i_k}}.
\end{align}
 
We let
$x\bmin y:=\min(x,y)$ for $x,y\in\bbR$.

$\indic{\cE}$ denotes the indicator function of an event $\cE$, and if $A$
is a set, we also write $\etta_A(x):=\indic{x\in A}$.

We will consider 
(complex-valued)
functions on $\oi^k$ and subsets of $\oi^k$ for $k\ge0$;
all such functions and subsets are tacitly assumed to be (Lebesgue) measurable.
($\oi^0$ is a one-element set.)
As usual, functions that are equal a.e.\ are identified.
(We some times write ``a.e.'' for emphasis, but this is often omitted.)
Integrals are with respect to Lebesgue measure (in one or several
dimensions), and  
we usually omit ``$\dd x $''. 
The Lebegue measure of a set $A$ is denoted by $|A|$.
(There should be no risk of confusion with the notation for finite sets.)

For $f$ and $g$ in $L^2(\oi^m)$ (or in a subspace thereof), we denote their
inner product by $\innprod{f,g}:=\int f\overline{g}$, and the $L^2$-norm by
$\norm{f}_2:=\innprod{f,f}\qq$.
We let $f_n\LLto f$ denote convergence in $L^2$.

$\fS_N$ is the symmetric group of all $N!$ permutations of \set{1,\dots,N}. 

$\GG$ is the (infinite) group of all measure-preserving bijections
$\gf:\oi\to\oi$.

\subsection{Notation for functional analysis and representation theory}
\label{SSnotF}

All our Hilbert spaces will be complex.
Let $H$ be a Hilbert space with inner product $\innprod{f,g}$.
Then $f\perp g$ means that $f$ and $g$ are orthogonal, i.e., $\innprod{f,g}=0$.

If $L$ is a (closed) subspace of a Hilbert space $H$, then
\begin{align}\label{xi1}
L^\perp:=\set{f\in H:f\perp g \;\forall g\in L}.  
\end{align}
If furthermore $M\subseteq L$ is another subspace, then
\begin{align}\label{xi2}
L\ominus M := L\cap M^\perp = \set{f\in L:f\perp g\;\forall g\in\cM}.  
\end{align}
Two subspaces $L,M\subseteq H$ are orthogonal, denoted by $L\perp M$,
if $f\perp g$ for all $f\in L$ and $g\in\cM$.
In this case, we denote their sum by $L\oplus M:=\set{f+g:f\in L,\, g\in\cM}$.

A unitary operator $T$ on $H$ is a bijective linear map $T:H\to H$  such
that $\norm{Tf}=\norm{f}$ for every $f\in H$, which is equivalent to
$\innprod{Tf,Tg}=\innprod{f,g}$ for all $f,g\in H$.

 A unitary representation of a group $G$ on $H$ is a group
homomorphism $g\mapsto\rho_g$ where each $\rho_g$ is a unitary operator on
$H$,
see \refApp{Arep}.

Let $\rho$ be a unitary representation of $G$ on $H$.
A closed subspace $W\subseteq H$ is \emph{invariant} if $\rho_g(W)\subseteq W$
for every $g\in G$.
Then the restriction of $\rho_g$ to $W$ defines a representation
$G$ on $W$, which we denote by $\rho\restr{W}$. (Sometimes we simply use
$\rho$ also for the retriction.)

Given a representation $\rho$ of $G$ on $H$, and an element $f\in H$,
we define
\begin{align}\label{hull}
  \rhof := \text{the closed linear hull of }\set{\rho_g(f):g\in G}
\subseteq H.
\end{align}
This is an invariant subspace of $H$; in fact, $\rhof$
is the smallest closed invariant subspace that contains $f$.

A representation $\rho$ on $H$  is \emph{irreducible} if the only closed
invariant 
subspaces are the trivial $\set{0}$ and $H$.

\subsection{Subgraph counts}\label{SSsub}

Throughout the paper, $F$ is a fixed labelled graph, and $m:=|F|$.
We assume (without loss of generality) that
$V(F)=[m]=\set{1,\dots,m}$. 
For another labelled graph $G$, we define the following subgraph counts.

\begin{definition}\label{DN}
Let $\uum$ be subsets of $V(G)$, where (as said above) $m=|F|$.
  \begin{romenumerate}
\item\label{Nuu}
$N(F,G;\uumx)$ is the number of 
labelled copies of $F$ in $G$ with the $i$th vertex
in $U_i$; equivalently, $N(F,G;\uumx)$ is the number of
injective graph homomorphisms 
$\gf:F\to G$ such that $\gf(i)\in U_i$ for every 
$i\in V(F)=[m]$. 

\item \label{tNuu}

$\tN(F,G;\uumx)$ is the symmetrized version obtained
by taking the average over all labellings of $F$; equivalently,
\begin{equation}\label{tN}
  \tN(F,G;\uumx):=\frac{1}{\ffm!}\sum_{\gs\in\fS_m}N(F,G;\uumxgs),
\end{equation}
summing over all permutations $\gs$ of $\set{1,\dots,m}$.

  \end{romenumerate}
\end{definition}
Note that for the 
version in \ref{Nuu},
the labelling of $F$ and the ordering of $\uum$ are important,
but for the
symmetrized version in \ref{tNuu} these do not matter,
and therefore \eqref{tN} is defined also for
unlabelled $F$, although we for conveníence will regard $F$ as labelled
there too.
Note also that if $F$ is the complete graph $\sK_m$, we have
$\tN(\sK_m,\dots)=N(\sK_m,\dots)$.

In the present paper, we will mainly study  the
subgraph count
\begin{align}
  \tN(F,G;\uumr)
\end{align}
where we are given $r$ disjoint subsets $U_1,\dots,U_r$ and 
the subset $U_i$ is repeated $m_i$ times; here $1\le r\le m=|F|$ and
$m_1,\dots,m_r\in\bbN$ with $\sumjr m_j=m$ are given.
In this case, 
$\tN(F,G;U_1^{m_1},\dots,U_r^{m_r})$
equals, 
up to the unimportant symmetry factor $\prod_i m_i!/m!$,
the number of unlabelled
copies of $F$ in $G$ that have exactly $m_i$ vertices in $U_i$.
(For each such copy of $F$, there are $\prod_i m_i!$ labellings of
$F$ for which it is counted, and the total number of labellings of $F$ is
$m!$.) 

We make the following standing assumptions (which we may repeat for clarity).
\begin{assumptions}\label{AAA}
$F$ is a (labelled) graph with $e(F)>0$;\\\phantom{\qquad}
$m=\ff$;\\\phantom{\qquad}
$1\le r\le m$;\\\phantom{\qquad}
$(\mmr)$ is a vector of positive integers with $\sumjr m_j=m$;
\\\phantom{\qquad}
$(\gaar)$ is a vector of positive real
numbers with $\sumim\ga_i\le1$;\\ \phantom{\qquad}
  $p\in(0,1)$.
\end{assumptions}
Note that, to avoid some trivial exceptions later, we have excluded the cases
$e(F)=0$, $p=0$, and $p=1$. 
The quantities in \refAss{AAA} are regarded as fixed,
and may be omitted from the notation. 
(In particular, we usually omit $p$, since all results are the same for
every $p\in(0,1)$.)

\begin{definition}\label{DP}
Assume \refAss{AAA}.
We define the following properties
of a graph sequence $\gnq$.
  \begin{romenumerate}[-10pt]
  \item \label{DP1}
 $\cPm(F;\gaar)$ is the property
that
\begin{equation}\label{dp1}
  N\bigpar{F,G_n;\uumr}=p^{e(F)}\prodir|U_i|^{m_i}+o\bigpar{|G_n|^\ffm}.
\end{equation}
holds for all disjoint
subsets $\uur$ of $V(G_n)$ with 
\begin{align}\label{dp0}
|U_i|=\floor{\ga_i|G_n|}, 
\qquad i=1,\dots,r.  
\end{align}

\item 
$\tcPm(F;\gaar)$ is the property that 
\begin{equation}\label{dp2}
  \tN\bigpar{F,G_n;\uumr}=p^{e(F)}\prodir|U_i|^{m_i}+o\bigpar{|G_n|^\ffm}.
\end{equation}
holds for all  $\uur$ as in \ref{DP1}.
  \end{romenumerate}
\end{definition}

Here $U_1,\dots,U_r$ depend on $n$, although we omit this from the notation,
and the error terms $o(|G_n|^m)$ in \eqref{dp1} and \eqref{dp2} are supposed
to hold uniformly for all permitted choices of $U_i=U_{i,n}$.

\begin{remark}\label{RP=P?}
  The relation between the properties of the types $\cP$ and $\tcP$ is not
  clear, except the simple observation that they coincide 
in the cases $r=1$ or $F=K_m$; furthermore
$\cPm(F;\gaar)$ implies $\tcPm(F;\gaar)$
if $m_1=\dots=m_r$ and $\ga_1=\dots=\ga_r$.
See the comments in \cite[Remarks 2.9 and 2.14]{SJ294}.

In the present paper, we mainly study $\tcP$.
\end{remark}

The case $r=1$ was studied  by 
\citet{SS:nni}; note that in this case, 
$\tN(F,G;U^m)=N(F,G;U^m)$ by \eqref{tN}, and hence 
$\tcP_m(F;\ga) = \cP_m(F;\ga)$.
More precisely, \cite{SS:nni} did not restrict the size of the subset,
and showed that \eqref{dp1} (or \eqref{dp2}) for all subsets $U\subseteq V(G)$
is a \qrp.
\citet{Shapira} and \citet{Yuster} 
assumed further that $|U|=\floor{\ga|G_n|}$ for some fixed $\ga$ with
$0<\ga<1$; they showed (\cite{Shapira} for $\ga=1/(\ffm+1)$ and \cite{Yuster}
in general) that, in our notation in \refD{DP},
$\cP_m(F;\ga)$ is a \qrp.
(The case $F=\sK_2$ and $\ga=1/2$ is already in \citet{ChungGW:quasi}.)  

Another case studied by several authors is $r=m$, when $m_1=\dots=m_r=1$ and
we assume that $U_1,\dots,U_m$ are disjoint.
We note first a simple but important example and counterexample.

\begin{example}\label{ERK2}
Taking $F=\sK_2$, $r=m=2$, and $\ga_1+\ga_2=1$, 
the property $\tcP_{1,1}(\sK_2,\ga,1-\ga)=\cP_{1,1}(\sK_2,\ga,1-\ga)$ 
is obviously equivalent to the property of cuts 
that the number of edges in $G_n$ between a subset $U\subseteq V(G_n)$ 
and $\bU:=V(G)\setminus U$
is $\bigpar{p+o(1)}|U|\,|\bU|$ whenever $|U|=\floor{\ga |G_n|}$.
%
It was noted by \citet{ChungGW:quasi} that this is \emph{not} 
a \qrp{} in the symmetric case $\ga=\xfrac12$.
On the other hand,
\citet{ChungG} showed that this is a \qrp{}
for every $\ga\neq 1/2$,
see also \cite[Section~9]{SJ234}.

For the case $\ga_1+\ga_2<1$,
\citet{SS:nni} showed that $\cP_{1,1}(\sK_2,1/3,1/3)$ is a \qrp.
More generally, $\cP_{1,1}(\sK_2,\ga_1,\ga_2)$ for any $\ga_1,\ga_2>0$ with
$\ga_1+\ga_2<1$ is a \qrp{} as a special case of \cite[Theorem 2.11]{SJ294}.
\end{example}

Further results for the case $r=m$
have been given
by \citet{Shapira}, \citet{ShapiraYuster,ShapiraYuster:hyper},
\citet{Yuster}; in particular
it follows from \cite[Lemma 2.2]{Shapira} and \cite{Yuster}
that $\tcP_{1,\dots,1}(F;\ga,\dots,\ga)$ is a \qrp{} for any $\ga<1/m$,
and 
it is shown in 
\cite{ShapiraYuster:hyper} 
that $\cP_{1,\dots,1}(K_m;\ga_1,\dots,\ga_m)$
is a \qrp{} for every  
$m\ge2$ and $(\ga_1,\dots,\ga_m)\neq(1/m,\dots,1/m)$ with $\sumim \ga_i=1$.
More generally, \citet[Theorem 2.11]{SJ294} showed that
$\tcP_{1,\dots,1}(F;\ga_1,\dots,\ga_m)$ is a \qrp{}
for any $F$ with $e(F)>0$
and 
$(\ga_1,\dots,\ga_m)\neq(1/m,\dots,1/m)$ with $\sumim \ga_i\le1$.
\citet{Hatami2Li} gave a different proof of this result.


The case $(\ga_1,\dots,\ga_m)=(1/m,\dots,1/m)$ remains somewhat mysterious.
(The results below might  give more understanding of \emph{why} this case is
exceptional.) 
The methods in \cite{SJ294} and \cite{Hatami2Li} reduce this case to an
algebraic problem for graphs, which seems surprisingly difficult, see 
\refL{Lalg}.
As we have noted in \refE{ERK2}, 
$\cP_{1,\dots,1}(F;1/m,\dots,1/m)$ is \emph{not} a \qrp{} for $F=\sK_2$;
furthermore, it is shown by \citet{ShapiraYuster:hyper} 
that a related hypergraph cut property fails for $(\gaam)=(1/m,\dots,1/m)$.
On the other hand, 
\citet{HuangLee} show that $\cP_{1,\dots,1}(F;1/m,\dots,1/m)$ is a \qrp{} 
for $F=\sK_m$ for any $m\ge3$. 
It was conjectured in \cite{SJ294} that 
$\tcP_{1,\dots,1}(F;1/m,\dots,1/m)$ is a \qrp{} whenever $e(F)>1$,
and this was proved for the special cases when $F$ is regular, a star, or
disconnected \cite[Theorem 2.12]{SJ294}.

The case $1<r<m$ was considered briefly in \cite[Section 9]{SJ294},
where in particular the case $\tcP_{2,1}(F;\ga_1,\ga_2)$ was studied as an
example.
It was shown that this is a \qrp{} when $\ga_1+\ga_2<1$, but the case
$\ga_1+\ga_2$ was left open. 
More generally, the case $r=2\le m$ was considered in \cite{Hatami2Li}, 
where a conjecture from \cite{SJ294} was shown, implying in particular 
(as said in \cite{SJ294}) that 
$\tcP_{2,1}(F;\ga_1,\ga_2)$ is a \qrp{} also when $\ga_1+\ga_2=1$, provided
$\ga_1\notin\set{\frac13,\frac23}$. 

We return to these results, and their relation with the present approach, in
\refSs{Sgood} and \ref{Sbad}.

\section{Transfer to graphons}\label{Stransfer}

We assume that the reader is familiar with the theory of graphons,
see e.g.\ \cite{BCLSV1,BCLSV2,Lovasz}.
We use the standard version of graphons, defined as symmetric functions
$W:\oi^2\to\oi$. 

If $F$ is a labelled graph and $W$ a graphon, we define
\begin{equation}\label{psifw}
  \psifw(\xxf):=\prod_{ij\in E(F)} W(x_i,x_j).
\end{equation}
and its symmetrization
\begin{equation}
  \label{symm}
\tpsifw(\xxm):=\frac1{\ff!}\sum_{\gs\in\fS_{\ff}}
\psifw\bigpar{x_{\gs(1)},\dots,x_{\gs(\ff)}}
.\end{equation}

We can now define a graphon analogue of \refD{DP}.
(We deliberately use the same notation, justified by \refL{Lequiv} below.)

\begin{definition}\label{DW}
Assume \refAss{AAA}.
We define the following properties
of a graphon $W$.
  \begin{romenumerate}[-10pt]
  \item \label{DW1}
$\cPm(F;\gaar)$ is the property
that
\begin{equation}\label{dw1}
\int_{\aamr}   \psifw(\xxf)
=p^{e(F)}\prodir|A_i|^{m_i}
\end{equation}
holds 
for all disjoint subsets $\AAr$ of $\oi$ with 
\begin{align}\label{dw0}
|A_i|=\ga_i, 
\qquad i=1,\dots,r.  
\end{align}

\item 
$\tcPm(F;\gaar)$ is the property that 
\begin{equation}\label{dw2}
  \int_\aamr \tpsifw(\xxm)=p^{e(F)}\prodir|A_i|^{m_i}
\end{equation}
for all  $\aam$ as in \ref{DW1}.
  \end{romenumerate}
\end{definition}

\begin{definition}
  \label{D3}
A property of graphons $W$ is \emph{\qr} if every graphon $W$ that satisfies
it is \aex{} equal to a constant.
Furthermore, the property is \emph{\pqr} if it is satisfied only by graphons
$W$ that are \aex{} equal to $p$.
\end{definition}

We can now use standard arguments to translate our problem from graph
sequences to graphons. 

\begin{lemma}\label{Lequiv}
Assume \refAss{AAA}.
Then
the  property 
$\cPm(F;\gaar)$ of graph sequences
is \pqr{} if and only if
the property 
$\cPm(F;\gaar)$ of graphons is.

Similarly, 
the  property 
$\tcPm(F;\gaar)$ of graph sequences
is \pqr{} if and only if
the property 
$\tcPm(F;\gaar)$ of graphons is.
\end{lemma}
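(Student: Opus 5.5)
We prove both directions by the standard compactness argument from graph limit theory, as in \cite{SJ234,SJ294}; the $\cP$ and $\tcP$ cases are identical, so we treat $\tcP$. The key tool is a lemma saying that the graph property is ``asymptotically'' the graphon property of the limit. Precisely, suppose $G_n\to W$ in the cut metric. We show that $(G_n)$ satisfies $\tcPm(F;\gaar)$ if and only if $W$ satisfies $\tcPm(F;\gaar)$ as a graphon. Granting this, the lemma follows.

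First suppose the graphon property is \pqr. Let $(G_n)$ satisfy the graph property. By compactness of the space of graphons, every subsequence has a further subsequence converging to some $W$. That $W$ has the graphon property, so $W=p$ a.e. Hence $G_n\to p$, i.e.\ $(G_n)$ is \pqr.

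Conversely, suppose the graph property is \pqr, and let $W$ satisfy the graphon property. Take the $W$-random graphs $G_n=G(n,W)$, which converge to $W$ a.s. By the key lemma, $(G_n)$ has the graph property a.s., so $G_n\to p$. Since also $G_n\to W$, we get $W=p$ a.e.

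For the key lemma, represent $G_n$ by its step graphon $W_{G_n}$ on the intervals $I_v$ of length $1/n$. A choice of disjoint $U_i\subseteq V(G_n)$ with $|U_i|=\floor{\ga_i n}$ gives unions of intervals $A_i'$, and $\tN(F,G_n;\uumr)/n^m$ equals $\int_{A_1'^{m_1}\times\dots\times A_r'^{m_r}}\tpsifx{W_{G_n}}$ up to $O(1/n)$. The $O(1/n)$ error accounts for non-injective maps. Adjusting the sets to have measures exactly $\ga_i$ costs another $O(1/n)$.

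The main step is uniformity: we need
\[
\sup_{A_1,\dots,A_r}\Bigl|\int_{\aamr}\tpsifx{W_{G_n}}-\int_{\aamr}\tpsifx{W}\Bigr|\to0,
\]
taken over disjoint sets, after applying suitable measure-preserving rearrangements. The counting lemma handles this: replacing one factor $W(x_i,x_j)$ at a time and integrating over a product of sets gives an error bounded by $e(F)\,\delta_\square$, since integrating over indicator-weighted coordinates is controlled by the cut norm. Here we use the version of cut-distance convergence with $\norm{W_{G_n}^{\gf_n}-W}_\square\to0$ for suitable relabellings $\gf_n\in\GG$ (or measure-preserving maps).

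The delicate point is the direction from graphon to graph. Sets $A_i$ for $W$ must be approximated by vertex sets of $G_n$, and conversely the suprema must match in the limit. Here we use that the sets $A_i$ may be arbitrary measurable sets: after the relabelling $\gf_n$, an arbitrary $A_i$ is approximated in measure by unions of intervals of length $1/n$. This requires the relabelling to be essentially a permutation of the intervals, which holds because for graphs the cut distance can be realized via fractional overlays. If that fails, we will use the equivalent sampling formulation: the graph counts for random $U_i$ inside prescribed sets concentrate, as in \cite[Section~3]{SJ294}. I expect this approximation step to be the main technical obstacle.
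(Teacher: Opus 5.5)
Your framework is the standard one the paper invokes when it cites \cite[Lemma 3.4]{SJ294} together with extensions of \cite[Lemma 3.1]{SJ294} and \cite[Lemma 7.2]{SJ234}: compactness for one direction, $W$-random graphs for the other, and both reduced to a transfer statement for a sequence $G_n\to W$. The half ``$W$ has the property $\Rightarrow$ $(G_n)$ has it'' works as you wrote it. Take measure-preserving bijections $\gf_n$ with $\norm{W_{G_n}^{\gf_n}-W}_\square\to0$. The sets $\gf_n\qw(A_i')$ coming from vertex sets are still disjoint with the right measures, and the telescoping counting lemma over product sets bounds the error by $e(F)\norm{W_{G_n}^{\gf_n}-W}_\square$, uniformly in the sets. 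The gap is in the other half, ``$(G_n)$ has the property $\Rightarrow$ $W$ has it''. This is exactly what your first direction needs, and you leave it open with a justification that does not work.

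Fix disjoint $A_i$ with $|A_i|=\ga_i$. You must then evaluate $\int\tpsifx{W_{G_n}}$ over $B_i:=\gf_n(A_i)$, and these are arbitrary measurable sets that depend on $n$. A measure-preserving bijection may split every interval $I_v$; for example, $B_1$ may be exactly the union of the left halves of all $I_v$. Then no union of intervals is close to $B_1$ in measure. Fractional overlays do not help, since they are precisely the couplings that split vertices.

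What $\int_{B_1^{m_1}\times\dots\times B_r^{m_r}}\tpsifx{W_{G_n}}$ actually equals is $n^{-m}$ times a count with fractional vertex weights $b_i(v):=n|B_i\cap I_v|$, where $\sum_ib_i(v)\le1$. The missing idea is that the graph property controls such fractional counts, because it holds uniformly over \emph{all} admissible vertex sets. Randomized rounding makes this precise:
\begin{enumerate}
\item Put each $v$ independently into $U_i$ with probability $b_i(v)$, and into none with probability $1-\sum_ib_i(v)$.
\item Since distinct vertices are independent, the expected restricted count is $n^m\int_{B_1^{m_1}\times\dots\times B_r^{m_r}}\tpsifx{W_{G_n}}+O(n^{m-1})$.
\item The sizes satisfy $|U_i|=\ga_in+O(\sqrt n)$ in mean, and correcting them to $\floor{\ga_in}$ costs $O(n^{m-1/2})$ in expectation.
\item Averaging the graph property over this randomness gives $\int_{B_1^{m_1}\times\dots\times B_r^{m_r}}\tpsifx{W_{G_n}}=p^{e(F)}\prod_i\ga_i^{m_i}+o(1)$, uniformly in the $B_i$.
\item The counting lemma then yields $\int_{A_1^{m_1}\times\dots\times A_r^{m_r}}\tpsifw=p^{e(F)}\prod_i\ga_i^{m_i}$.
\end{enumerate}
Your fallback ``sampling formulation'' points in this direction. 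As written, however, the step that carries the content of the cited transfer lemmas is not proved.
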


\begin{proof}
  This follows by standard arguments as in the special case $r=m$ 
in \cite[Lemma 3.4]{SJ294}, using a straightforward extension of
\cite[Lemma 3.1]{SJ294} and \cite[Lemma 7.2]{SJ234}.
\end{proof}

\section{Representations of the group of \mpp{} bijections}

In the present paper, we attack the problem of whether the properties in
\refDs{DP} and \ref{DW} are \qr{} by finding certain irreducible subspaces of
$L^2(\oi^m)$ 
for the natural representation of 
the group of \mpp{} bijections of $\oi$ onto itself.
The idea behind this is that graphons are defined up to \mpp{} bijections, 
so properties of them are invariant under such bijections; hence, we are
interested in invariant subspaces, so it seems natural to study such
subspaces in general.
In the present section we state the 
result on representations
that we will use;
the proof is given
in \refS{Spf}.

We are really interested in the functions $\psifw$ 
and $\tpsifw$ on $\oi^m$ defined in \eqref{psifw}, 
which are bounded and real-valued (in fact, with values in $\oi$). 
However, to have access to the general theory of group representations in a
(complex) Hilbert space, we will more generally consider functions in
$L^2(\oi^m)$, and we allow the functions to be complex-valued.

Our main result in this section (\refT{T1}) is for symmetric functions only.
Therefore  in the sequel we will mainly consider only the symmetric function
$\tpsifw$ and the corresponding property $\tcPm(F;\gaar)$.

\subsection{Some spaces of functions}\label{SS1}

We study functions in 
$L^2(\oi^m)$, the Hilbert space of complex-valued
square integrable functions on $\oi^m$, for some integer $m\ge0$.
(For $m=0$, this is by definition the one-dimensional space $\bbC$ of
constants.) 
We use the following notation for this space and certain subspaces,
where $m\ge0$ and $-1\le k\le m$.
We may sometimes omit the argument $\oi^m$.
Recall the notation $x_I$ from \eqref{da3}.
\begin{itemize}

\item 
$\Lsm:=$ the subspace of symmetric functions in $\Lm$, i.e. functions $f\in\Lm$
such that $f(x_1,\dots,x_m)=f(x_{\gs(1)},\dots,x_{\gs(m)})$ for every
$\gs\in\fS_m$. 

\item 
$\Lmlek:=$ the closed subspace of $\Lm$ spanned by functions $f(x)$
that depend only on a
subset $x_I$ of the coordinates for some $I\subseteq[m]$ with $|I|\le k$.
(Recall the notation \eqref{da3}.)
For $k=-1$ this means that $\Lmlex{-1}=\set0$.
Note also that $\Lmlex{m}=\Lm$.

\item 
$\Lsmlek:=\Lsm\cap \Lmlek$, the subspace of  symmetric functions in $\Lmlek$.
\item 
$\Lsmk:=\Lsmlek\ominus\Lsmlex{k-1}$
(for $0\le k\le m$ only).
\end{itemize}
%
Note that we have
\begin{align}
\set0&=\Llex{-1}\subseteq \Llex{0}\subseteq\dots\subseteq \Llex{m-1}
\subseteq \Llex{m}=\Lm,\label{d2a}\\
\set0&=\Lslex{-1}\subseteq \Lslex{0}\subseteq\dots\subseteq \Lslex{m-1}
\subseteq \Lslex{m}=\Lsm\label{d2s}
.\end{align}
Hence, $\Lsmk$, $k=0,\dots,m$, are pairwise orthogonal subspaces of
$\Lsm$, and
\begin{align}\label{d3}
  \Lsmlex{\ell}:=\bigoplus_{k=0}^\ell \Lsmk.
\qquad 0\le \ell\le m.
\end{align}
Note also that
\begin{align}
\label{d2}
\Lmx{0}
=\Lmlex0
=\Lsmlex0=\Lsmx0=\bbC,  
\end{align}
the constant functions on $\oi^m$.

We can characterize $\Lmlek$ and $\Lmk$ as follows using 
Fourier transforms.
(This is nothing special for the Fourier transform; the lemma holds for 
expansions using any orthogonal basis $(\chi_i)_i$ of $L^2(\oi)$ with
$\chi_0=1$, but we have chosen to be concrete.)
We denote the Fourier coefficients of $f$ by
$\hf(n_1,\dots,n_m)$, where $(n_1,\dots,n_m)\in\bbZ^m$.

\begin{lemma}\label{LF}
\ \begin{romenumerate}
  \item \label{LFle}
$\Lmlek$ equals the space of functions $f\in \Lm$ such that 
\begin{align}\label{lfle}
\hf(n_1,\dots,n_m)=0 
\quad\text{unless }|\set{i\in[m]:n_i\neq0}|\le k.
\end{align}

  \item \label{LFles}
$\Lsmlek$ equals the space of functions $f\in \Lsm$ such that 
\eqref{lfle} holds.

  \item \label{LF=}
$\Lsmk$ equals the space of functions $f\in \Lsm$ such that 
\begin{align}\label{lf=}
\hf(n_1,\dots,n_m)=0 
\quad\text{unless }|\set{i\in[m]:n_i\neq0}|=k.
\end{align}
  \end{romenumerate}
\end{lemma}
\begin{proof}
\pfitemref{LFle}
  If $f(x)=f_I(x_I)$ for some $I\subseteq[m]$ with $|I|\le k$ and some
  $f_I\in L^2(\oi^k)$, then $\hf(n_1,\dots,n_m)=0$ whenever $n_i\neq0$ for
  some $i\notin I$; hence  \eqref{lfle} holds.
By the definition, such functions $f_I(x_I)$ span $\Lsmlek$, and thus
\eqref{lfle} holds for all $f\in\Lsmlek$.

Conversely, if \eqref{lfle} holds, then the Fourier expansion shows that
$f$ is a limit of linear combinations of functions 
$\prod_{j\in J}e^{2\pi n_j\ii x_j}$ for sets $J\subseteq[m]$ with $|J|\le k$,
and these products belong to $\Lmlek$ by the definition.

\pfitemref{LFles}
By \ref{LFle} and the definition of $\Lsmlek$.

\pfitemref{LF=}
A consequence of \ref{LFles} and the definition of $\Lsmk$, 
using Parseval's relation (i.e., the orthogonality of the Fourier series).
\end{proof}

For $0\le k\le m$, 
define a linear map 
$\Phi_{m,k}:L^2(\oi^k)\to \Lm$ by,
for $x_1,\dots,x_m\in\oi$,
\begin{align}\label{Phi}
(\Phi_{m,k}f)(x_1,\dots,x_m):=\sum_{I\in \ZZ mk} f(x_I),
\end{align}
where we 
sum over the $\binom mk$ subsets of $[m]$ of size $k$.
It is easy to see that 
\begin{align}
  \label{jun2}
\text{If $f\in\Ls(\oi^k)$, then $\Phi_{m,k}f\in\Lsm$}.
\end{align}

\begin{lemma}\label{LPhi}
  Let $0\le k\le m$.
Then $\Phi_{m,k}$ is an isomorphism 
of $\Ls(\oi^k)$ onto $\Lsmlek$,
and of the subspace
$\Lsk(\oi^k)$ onto $\Lsmk$.
\end{lemma}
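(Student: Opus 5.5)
The plan is to work on the Fourier side, using \refL{LF}, and to treat first the ``top'' pieces $\Lsk(\oi^k)\to\Lsmk$. The general statement then follows from the orthogonal decomposition \eqref{d3}, applied both in $\oi^k$ and in $\oi^m$.

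First, some trivial facts. For any $f\in L^2(\oi^k)$, each term $f(x_I)$ in \eqref{Phi} has $L^2(\oi^m)$-norm $\norm f_2$. Hence $\norm{\Phi_{m,k}f}_2\le\binom mk\norm f_2$, so $\Phi_{m,k}$ is bounded. Each term depends on only $k$ coordinates, so $\Phi_{m,k}f\in\Lmlek$. Together with \eqref{jun2}, this gives $\Phi_{m,k}(\Ls(\oi^k))\subseteq\Lsmlek$.

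Next I compute Fourier coefficients. Let $n\in\bbZ^m$ and write $S(n):=\set{i:n_i\neq0}$. Then
\begin{align*}
\widehat{\Phi_{m,k}f}(n)=\sum_{I\in\ZZ mk:\;S(n)\subseteq I}\hf(n_I).
\end{align*}
Suppose $f\in\Lsk(\oi^k)$. By \refL{LF}\ref{LF=} with $m=k$, $\hf(n')=0$ unless all coordinates of $n'$ are nonzero. So only $I=S(n)$ can contribute, and only when $|S(n)|=k$. Thus $\widehat{\Phi_{m,k}f}(n)=\hf(n_{S(n)})$ if $|S(n)|=k$, and $0$ otherwise.

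This formula has two consequences.
\begin{itemize}
\item[(a)] $\Phi_{m,k}f\in\Lsmk$ by \refL{LF}\ref{LF=}. By Parseval, $\norm{\Phi_{m,k}f}_2^2=\binom mk\norm f_2^2$, since each $n'\in(\bbZ\setminus\set0)^k$ arises from exactly one $n$ for each choice of $S(n)\in\ZZ mk$.
\item[(b)] Surjectivity. Given $g\in\Lsmk$, define $f$ by $\hf(n'):=\hat g(n',0,\dots,0)$ for $n'\in(\bbZ\setminus\set0)^k$, and $\hf(n'):=0$ otherwise. Then $f$ is in $L^2$ and symmetric, and lies in $\Lsk(\oi^k)$. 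Symmetry of $g$ gives $\hat g(n)=\hat g(n_{S(n)},0,\dots,0)$, which yields $\Phi_{m,k}f=g$.
\end{itemize}
So $\Phi_{m,k}$ is $\binom mk\qq$ times a unitary map of $\Lsk(\oi^k)$ onto $\Lsmk$. The step needing the most care is this bookkeeping of the ordering of coordinates in $n_I$, and the use of symmetry of $g$ to make $f$ well defined. This is where I expect any subtlety to lie.

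For the full space, decompose $\Ls(\oi^k)=\bigoplus_{j=0}^k\Lsx{j}(\oi^k)$ by \eqref{d3}, and take $f\in\Lsx j(\oi^k)$. By the part already proved, with $(k,j)$ in place of $(m,k)$, we can write $f=\Phi_{k,j}h$ with $h\in\Lsx j(\oi^j)$. A direct count gives the identity
\begin{align*}
\Phi_{m,k}\Phi_{k,j}=\tbinom{m-j}{k-j}\Phi_{m,j},
\end{align*}
because each $J\in\ZZ mj$ lies in exactly $\binom{m-j}{k-j}$ sets $I\in\ZZ mk$. Hence $\Phi_{m,k}$ maps $\Lsx j(\oi^k)$ bijectively onto $\Lsmx j$, and it is a nonzero multiple of a unitary map there.

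The subspaces $\Lsmx j$, $0\le j\le k$, are pairwise orthogonal, and their sum is $\Lsmlek$ by \eqref{d3}. Consequently $\Phi_{m,k}$ is a bounded bijection of $\Ls(\oi^k)$ onto $\Lsmlek$. Its inverse is bounded, since it acts blockwise on finitely many orthogonal pieces with uniformly bounded constants. Alternatively, boundedness of the inverse follows from the open mapping theorem.
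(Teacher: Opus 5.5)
Your proof is correct and follows the same route as the paper. Both compute the Fourier coefficients of $\Phi_{m,k}f$ for $f\in\Lsk(\oi^k)$, which shows that $\Phi_{m,k}$ is $\binom mk\qq$ times a unitary map onto $\Lsmk$, and both then pass to $\Ls(\oi^k)\to\Lsmlek$ through the decomposition \eqref{d3}; your identity $\Phi_{m,k}\Phi_{k,j}=\binom{m-j}{k-j}\Phi_{m,j}$ (the continuous analogue of \eqref{a6}) usefully makes explicit the step the paper leaves implicit, namely that $\Phi_{m,k}$ maps each $\Lsx j(\oi^k)$ with $j<k$ onto $\Lsmx j$.
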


\begin{proof}
Let $f\in\Ls_k(\oi^k)$  and consider the Fourier coefficient
\begin{align}\label{jun1}
  \widehat{\Phi_{m,k}f}(n_1,\dots,n_m)
=\sum_{I\in\ZZ mk}\int_{\oi^m} e^{-2\pi\ii\sum_j n_jx_j}f(x_I)
\dd x_1\dotsm\dd x_m.  
\end{align}
The integral in \eqref{jun1} vanishes if $n_j\neq0$ for some $j\notin I$
(by integrating first over $x_j$).
Hence we may assume that $J:=\set{j:n_j\neq0}\subseteq I$.
On the other hand, if $J\subsetneq I$, then 
$e^{2\pi\ii\sum_j n_jx_j}$ depends only on at most $k-1$ variables $x_j$,
and thus
$e^{2\pi\ii\sum_j n_jx_j}\in \Lmlex{k-1}$
is orthogonal to $f$ and again the integral is 0.
Hence, the Fourier coefficient in \eqref{jun1} vanishes unless $J=I$ for some
$I\in\ZZ mk$, i.e., if $|J|=k$, which shows that $\Phi_{m,k}f\in\Ls(\oi^k)$
by \refL{LF}\ref{LF=} and \eqref{jun2}.
Furthermore, in this case, for any $n_1,\dots,n_k\neq0$, we thus have
\begin{align}\label{jun3}
  \widehat{\Phi_{m,k}f}(n_1,\dots,n_k,0,\dots,0) & 
=\int_{\oi^m} e^{-2\pi\ii\sum_j n_jx_j}f(x_1,\dots,x_k)\dd x_1\dotsm\dd x_m
\notag\\&
=\hf(n_1,\dots,n_k).
\end{align}
The Fourier coefficient 
$\widehat{\Phi_{m,k}F}(n_1,\dots,n_m)$ is a symmetric function of
$n_1,\dots,n_m$ for any $F\in\Lsm$, and thus it follows
from \eqref{jun2}, \eqref{jun3}, and \refL{LF}\ref{LF=} that 
$\Phi_{m,k}$ is an isomorphism $\Lsk(\oi^k)\to\Lsmk$, and in fact an
isometry up to a constant factor $\binom mk\qq$ (the exact value is not
important for us).

Finally, this implies that
$\Phi_{m,k}$ also is an isomorphism $\Lslek(\oi^k)\to\Lsmlek$
by the decomposition \eqref{d3} applied to both sides (with $\ell=k$).
\end{proof}

For two functions $h_1$ and $h_2$ on $\oi$, we define
\begin{align}
  \label{hh}
h_1\tensor h_2(x,y):=h_1(x)h_2(y),
\qquad (x,y)\in\oi^2,
\end{align}
and similarly for more than two factors.
We write also $h^{\tensor k}:=h\tensor\dotsm\tensor h$ with $k\ge1$ factors.

Let $L^2_1(\oi):=\Ls_1(\oi)$, and note that,
by the definition or by \refL{LF}, 
\begin{align}\label{jun5}
L^2_1(\oi)=\Bigset{f\in L^2(\oi):\intoi f=0}.  
\end{align}

\begin{lemma}\label{L3}
Let $1\le k\le m$. Then
$\Lsmk$ is the closed linear span of
the set $\set{\Phi_{m,k}(h^{\tensor k}):h\in L^2_1(\oi)}$.
\end{lemma}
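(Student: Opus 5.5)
By \refL{LPhi}, $\Phi_{m,k}$ maps $\Lsk(\oi^k)$ isomorphically onto $\Lsmk$. It is bounded (an isometry up to a constant factor), so it maps closed linear spans to closed linear spans. The statement therefore reduces to the case $m=k$: I will show that $\Lsk(\oi^k)$ is the closed linear span of $\set{h^{\tensor k}:h\in L^2_1(\oi)}$, and then apply $\Phi_{m,k}$ to both sides.

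\emph{Inclusion.} For $h\in L^2_1(\oi)$, the function $h^{\tensor k}$ is clearly symmetric. Its Fourier coefficients are $\hh(n_1)\dotsm\hh(n_k)$, and these vanish whenever some $n_i=0$, since $\hh(0)=\intoi h=0$ by \eqref{jun5}. Hence \refL{LF}\ref{LF=} with $m=k$ gives $h^{\tensor k}\in\Lsk(\oi^k)$. The latter space is closed, so the closed span is contained in it.

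\emph{Density.} Let $f\in\Lsk(\oi^k)$. By \refL{LF}\ref{LF=}, its Fourier expansion only involves exponentials $e_{n_1}\tensor\dots\tensor e_{n_k}$ with every $n_i\neq0$, where $e_n(x)=e^{2\pi\ii nx}$. Each such $e_{n_i}$ lies in $L^2_1(\oi)$. So $f$ is an $L^2$-limit of finite linear combinations $g_j$ of products $h_1\tensor\dots\tensor h_k$ with $h_i\in L^2_1(\oi)$.

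Let $S$ denote the symmetrization operator $Sg(x)=\frac1{k!}\sum_{\gs\in\fS_k}g(x_{\gs(1)},\dots,x_{\gs(k)})$. It is an orthogonal projection, in particular continuous, and $Sf=f$. Hence $Sg_j\to f$. It thus suffices to show that each $S(h_1\tensor\dots\tensor h_k)$ lies in the linear span of the functions $h^{\tensor k}$ with $h\in L^2_1(\oi)$.

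\emph{Polarization.} This is the one step needing a small computation. I will use the identity
\begin{align*}
\sum_{\eps\in\set{\pm1}^k}\eps_1\dotsm\eps_k\Bigpar{\sum_{i=1}^k\eps_ih_i}^{\tensor k}
=2^k\,k!\,S(h_1\tensor\dots\tensor h_k).
\end{align*}
To verify it, expand the tensor power into terms $h_{i_1}\tensor\dots\tensor h_{i_k}$, each with coefficient $\eps_{i_1}\dotsm\eps_{i_k}$. After multiplying by $\eps_1\dotsm\eps_k$ and summing over $\eps$, a term survives only if every index $1,\dots,k$ appears an odd number of times among $i_1,\dots,i_k$. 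Since there are exactly $k$ positions, this forces $(i_1,\dots,i_k)$ to be a permutation of $[k]$, and each such term contributes $2^k$.

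Each $\sum_i\eps_ih_i$ has integral zero, so it belongs to $L^2_1(\oi)$. This completes the density argument, and hence the proof.
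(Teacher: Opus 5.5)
Your proof is correct and follows essentially the same route as the paper: reduce to $m=k$ via \refL{LPhi}, check the inclusion through the Fourier coefficients $\prod_i\widehat h(n_i)$, and obtain density by applying polarization to Fourier exponentials with all $n_i\neq0$. The differences are minor. You use the $\pm1$-sign polarization identity and verify it yourself, where the paper cites the inclusion--exclusion identity $\sum_{J\subseteq[k]}(-1)^{k-|J|}\bigl(\sum_{j\in J}h_j\bigr)^{\otimes k}$. Your explicit use of the symmetrization projection $S$ also spells out the step from the Fourier expansion of a symmetric $f$ to symmetrized products, which the paper states only briefly.
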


\begin{proof}
Let $L\subseteq \Ls(\oi^k)$.
be the closed linear  span of $\set{(h^{\tensor k}:h\in L^2_1(\oi)}$. 
  By \refL{LPhi}, it suffices to show that $L=\Lsk(\oi^k)$.

First, we have, for $h\in L^2_1(\oi)$,
  \begin{align}
    \widehat{h^{\tensor k}}(n_1,\dots,n_k)
= \prodik\hh(n_i),
  \end{align}
which vanishes if some $n_i=0$ since $\hh(0)=\intoi h=0$.
Hence, $h^{\tensor k}\in \Lsk(\oi^k)$ by \refL{LF}, and thus
$L\subseteq \Lsk(\oi^k)$.
For the converse, we note that if $h_1,\dots,h_k\in L^2_1(\oi)$,
then the symmetric function 
$\sum_{\gs\in\fS_k} h_{\gs(1)}\tensor\dots\tensor h_{\gs(k)}\in L$,
since by standard polarization it can be written as a linear combination of
some tensor powers, e.g.\ as 
(see e.g.\ \cite[(22)]{MazurO} and \cite[Appendix]{Thomas})
\begin{align}
  \sum_{\gs\in\fS_k} h_{\gs(1)}\tensor\dots\tensor h_{\gs(k)}
=\sum_{J\subseteq[k]}(-1)^{k-|J|}\Bigpar{\sum_{j\in J}h_j}^{\tensor k}
.\end{align}
Taking $h_i(x)=e^{2\pi\ii n_ix}$ with $n_i=0$ and using \refL{L1}\ref{LF=} 
shows that the Fourier expansion of any $f\in \Lsk(\oi^k)$ can be written as
a norm convergent sum of functions in $L$; hence
$\Lsk(\oi^k)\subseteq L$.
\end{proof}

\subsection{The decomposition into irreducible representations}
\label{SS2}

Let $\GG$ be the group of all measure-preserving bijections $\gf:\oi\to\oi$.
There is a natural unitary representation $\rho$ of $\GG$ on $\Lm$ given by
\begin{align}\label{rho}
  \rho_\gf(f)(x_1,\dots,x_m) = f(\gf\qw(x_1),\dots,\gf\qw(x_m)).
\end{align}
(This representation depends on the parameter $m$, but $m$ will be clear
from the context and we usually omit it from the notation.)

It is obvious that the subspaces of $\Lm$ defined in \refSS{SS1} 
all are invariant
subspaces of $\Lm$ for the representation $\rho$, and we can thus consider
the representation of $\GG$  on any of them given by the restriction of
$\rho$.
The basis of our results in the present paper is the following 
description of the (closed) invariant subspaces of $\Lsm$.

\begin{theorem}\label{T1}
  The representation \eqref{rho} of $\GG$ on $\Lsmk$ is irreducible for 
every $0\le k\le m$, and these $m+1$ representations are non-equivalent.
Furthermore, every closed subspace $M$  
of $\Lsm$ 
that  is invariant under the representation $\rho$
is of the form
\begin{align}\label{t1}
M=  \bigoplus_{k\in K} \Lsmk
\end{align}
for some set $K\subseteq\set{0,\dots,m}$.
In particular, if there exists a non-constant function
$f\in M$, then there
exists $k\in\set{1,\dots,m}$ such that $\Lsmk\subseteq M$.
\end{theorem}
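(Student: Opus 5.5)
The plan is to prove \refT{T1} by determining the commutant of $\rho$ on $\Lsm$. Let $P_k$ denote the orthogonal projection of $\Lsm$ onto $\Lsmk$. The central claim is that every bounded operator $T$ on $\Lsm$ commuting with all $\rho_\gf$, $\gf\in\GG$, is of the form
\begin{align*}
T=\sum_{k=0}^m c_kP_k,\qquad c_k\in\bbC.
\end{align*}
Everything in \refT{T1} follows from this claim by standard arguments.
\begin{itemize}
\item If $M$ is a closed invariant subspace, its orthogonal projection commutes with $\rho$. It is therefore $\sum c_kP_k$, and being idempotent it has $c_k\in\set{0,1}$. This gives \eqref{t1} with $K=\set{k:c_k=1}$.
\item Taking $M\subseteq\Lsmk$ shows irreducibility.
\item An intertwiner $S:\Lsmx{j}\to\Lsmk$ with $j\neq k$ extends by $0$ on $\Lsmk^\perp$ to an operator on $\Lsm$ commuting with $\rho$. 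That operator maps $\Lsmx j$ into $\Lsmk$ while being diagonal in the $P_k$, so it vanishes; this gives non-equivalence.
\item The last sentence of \refT{T1} follows because a non-constant $f\in M$ forces $K\neq\set0$, using \eqref{d2}.
\end{itemize}

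To compute the commutant I would approximate $\GG$ by finite symmetric groups. Take $N=2^n$ and split $\oi$ into the $N$ dyadic intervals $J_1,\dots,J_N$. Translating these intervals among themselves embeds $\fS_N$ into $\GG$. Let $W_N\subseteq\Lsm$ be the space of symmetric functions that are constant on each cell $J_{i_1}\times\dots\times J_{i_m}$ and vanish unless $i_1,\dots,i_m$ are distinct. Then $W_N\cong\bbC^{\binom{[N]}{m}}$ as $\fS_N$-modules. Its orthogonal projection $Q_N$ commutes with $\rho_\gf$ for every $\gf\in\fS_N$. As $n\to\infty$ we have $Q_N\to I$ strongly, because the diagonal cells have total measure $O(1/N)$ and conditional expectations converge in $L^2$.

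For $N\ge2m$ the permutation module on $m$-subsets decomposes multiplicity-freely as $\bigoplus_{j=0}^m S^{(N-j,j)}$. Hence the commutant of $\fS_N$ on $W_N$ is spanned by the projections $P_{N,j}$ onto the isotypic pieces $W_{N,j}$, and for $T$ in the commutant of $\rho$,
\begin{align*}
Q_NTQ_N=\sum_{j=0}^m c_{N,j}P_{N,j}.
\end{align*}
I would identify the pieces explicitly. $W_{N,j}$ consists of the functions $\sum_{I\in\ZZ{m}{j}}g(x_I)$ restricted to distinct cells, where $g$ is a symmetric step function on $j$-subsets that is orthogonal to all functions of fewer variables. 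Up to the diagonal correction, this is exactly $Q_N\Phi_{m,j}$ applied to dyadic step functions in $\Lsx{j}(\oi^j)$.

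The main obstacle is passing to the limit: I need $P_{N,j}\to P_j$ strongly. The identification above shows that $W_{N,j}$ is close to $\Lsmx j\cap(\text{dyadic step functions})$. The error comes only from the off-diagonal restriction and from the "orthogonal to lower order" condition being imposed on distinct-index sums instead of full integrals, and both discrepancies are $O(1/N)$ in operator norm on these finite pieces. Combining this with density of dyadic step functions, \refL{LPhi}, and \refL{L3}, I expect $P_{N,j}Q_N\to P_j$ strongly. Next I need the $c_{N,j}$ to converge. Pick a fixed unit vector $v_j\in\Lsmx j$ that is a dyadic step function vanishing near the diagonal, for instance $\Phi_{m,j}(h^{\tensor j})$ suitably truncated. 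Then $c_{N,j}\approx\innprod{Tv_j,v_j}$ for large $N$, so the $c_{N,j}$ converge to some $c_j$. Finally $Q_NTQ_N\to T$ strongly, and therefore $T=\sum_j c_jP_j$. I would present the representation-theoretic input (Young's rule for $\bbC^{\binom{[N]}{m}}$) as a cited fact from \refApp{Arep}.
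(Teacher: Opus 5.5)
Your proof is correct in outline and follows a genuinely different route from the paper.

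\emph{The paper's route.} The paper works with cyclic subspaces.
Averaging over the subgroups $\fS_n^N$ shows that the conditional expectation $P_Nf$ lies in $\rhof$ (\refL{L1}, \refL{L2}).
\refT{TR2}, proved by Bell-number character computations, shows that $\lls_m(\DcNm)$ occurs only once in all of $\ell^2(\Nm)$.
Hence the character projection $R_N$ of \refC{CR1} gives $R_NP_Nf\in\rhof$; this is needed because $P_Nf$ also lives on the diagonal cells.
\refL{LQL} shows that $R_NP_Nf\neq0$ for large $N$ when $f\notin\Lsmlex{m-1}$.
Then \eqref{t1} follows by induction on $m$ via $\Phi_{m,m-1}$ and the $M$ versus $M^\perp$ argument, and irreducibility and inequivalence are deduced afterwards.

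\emph{Your route.} You compress an arbitrary operator $T$ commuting with $\rho$ by $Q_N$, which commutes with $\tfS_N$ automatically because $W_N$ is invariant.
This bypasses \refL{L1}, \refL{L2} and \refT{TR2} entirely.
You need only the multiplicity-freeness of $\ell^2\bigpar{\ZZ{N}{m}}$, which is exactly \refT{TR1}, so you need not import Young's rule.
You also avoid the induction on $m$ and get every conclusion at once from the commutant.
The price is that you must control every isotypic piece $W_{N,j}$ in the limit, whereas the paper only ever handles the top piece.

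\emph{Strong convergence $P_{N,j}\to P_j$.} This step needs tightening; note that $P_{N,j}Q_N=P_{N,j}$.
There is in fact no ``lower order'' discrepancy. For step functions vanishing on the diagonal cells, the discrete condition defining $\lls_j(\DcNx j)$ coincides with $\int g\,\mathrm{d}x_j=0$, so $\lls_j(\DcNx j)\subseteq\Lsx j(\oi^j)$.
By \eqref{a7}, \refT{TR1} and Schur's lemma, $W_{N,j}=\PhiN_{m,j}\bigpar{\lls_j(\DcNx j)}$.
By \refL{LQK}, each $u=\PhiN_{m,j}g$ in $W_{N,j}$ equals $\Phi_{m,j}g\cdot\ettcDcNm$, where $\Phi_{m,j}g\in\Lsmx j$.
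The truncation error satisfies $\norm{\Phi_{m,j}g\cdot\ettcDNm}_2\le CN^{-1/2}\norm{g}_2$.
For large $N$ you also have $\norm{g}_2\le C\norm{u}_2$, because $\Phi_{m,j}$ is a multiple of an isometry on $\Lsx j(\oi^j)$ (alternatively, use \refL{LR0}).
Hence $\norm{(I-P_j)P_{N,j}}\to0$. Together with $\sum_jP_{N,j}=Q_N\to I$, this gives $P_{N,j}\to P_j$ strongly.

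\emph{The test vector.} Your proposed $v_j$ does not exist for $j<m$: no non-zero element of $\Lsmx j$ vanishes near the diagonal.
Already for $m=2$ and $j=1$, the condition $g(x)+g(y)=0$ for $x$ near $y$ forces $g=0$.
Truncating $\Phi_{m,j}(h^{\tensor j})$ takes it out of $\Lsmx j$.
You do not need such a vector. For any unit $v\in\Lsmx j$ you have $\innprod{TQ_Nv,Q_Nv}=\sum_ic_{N,i}\norm{P_{N,i}v}_2^2$ with $|c_{N,i}|\le\norm{T}$. Strong convergence then already gives $c_{N,j}\to\innprod{Tv,v}$.

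\emph{A small slip.} The intertwiner $S$ should be extended by $0$ on the orthogonal complement of $\Lsmx j$, not of $\Lsmk$; that is, replace $S$ by $SP_j$.
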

This theorem might be known in other contexts, 
but we have been unable to find a reference
so we give a complete (and lengthy)  proof  in 
\refS{Spf}.

\section{Applications to subgraph count properties}\label{Sgood}

Assume \refAss{AAA}.

Let $\cM=\cM(\mmr;\gaar)$
be the subset of $\Lsm$ consisting of all functions $f\in\Lsm$ such that
\begin{equation}\label{m2}
  \int_\aamr f(\xxm)=0
\end{equation}
for all disjoint subsets $\AAr$ of $\oi$ with $|A_i|=\ga_i$, $i=1,\dots,r$.
Then $\cM$ is evidently a closed subspace of $\Lsm$ that is invariant under
the action of $\GG$, and
the property $\tP_\mmr(F;\gaar)$
is precisely, by comparing  \eqref{dw2} and \eqref{m2},
\begin{align}\label{mac1}
  \tpsifw-p^{e(F)} \in\cM.
\end{align}
Hence we can try to analyse the property
$\tcPm(F;\gaar)$ in two steps; we first find $\cM$, and then try to show that
if \eqref{mac1} holds, then necessarily $W=p$ a.e.

We use \refT{T1} and the orthogonal decomposition \eqref{t1} of $\cM$; let
 $\cK\subseteq\set{0,\dots,m}$ be the set $K$ in \eqref{t1} for this subspace
 $\cM$.  
We say that an integer $k$ is \emph{bad} if $k\in \cK$.
Thus
\begin{align}\label{cm}
\cM=  \bigoplus_{k\in \cK} \Lsmk,
\end{align}
taking the direct sum over all bad $k$.

\begin{remark}\label{Rsplit}
Note that $\cM$, $\cK$, and ``bad''  depend on $\mmr$ and $\gaar$, but not
on the graph $F$.
On the other hand, the property \eqref{mac1}, and whether it implies
$W=p$ and thus is \qr, depends on $F$ and $\cK$ (by \eqref{cm}), but not on
$r$, $\mmr$, or $\gaar$ except through $\cK$.
We have thus split the problem into two, connected only through the set $\cK$.
\end{remark}

It is shown in \cite[Proof of Theorem 2.11]{SJ294} that
\begin{align}\label{mac2}
    \tpsifw-p^{e(F)} =0\text{ a.e.\ on } \oi^m
\implies
W=p \text{ a.e.\ on } \oi^2.
\end{align}
This yields a simple sufficient criterion:

\begin{theorem}\label{T0}
Suppose that 
$\cK=\emptyset$, i.e., there is no bad $k$.
(This is by \eqref{cm} equivalent to 
the invariant subspace $\cM(\mmr;\gaar)$ defined by \eqref{m2}
being equal to \set0.)
Then the   property $\tcPm(F;\gaar)$ is \qr.
\end{theorem}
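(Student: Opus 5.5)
The argument is a direct combination of the reformulation \eqref{mac1} with the implication \eqref{mac2}, followed by the transfer \refL{Lequiv} back to graph sequences.

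\emph{Step 1 (membership).} Let $W$ be a graphon satisfying the graphon property $\tcPm(F;\gaar)$ of \refD{DW}. Since $W$ takes values in $\oi$, the function $\tpsifw$ is bounded, measurable and symmetric, so $\tpsifw-p^{e(F)}\in\Lsm$. By \eqref{dw2}, for all disjoint $\AAr$ with $|A_i|=\ga_i$ we have
\begin{align*}
\int_\aamr \bigpar{\tpsifw-p^{e(F)}}
=p^{e(F)}\prodir|A_i|^{m_i}-p^{e(F)}\prodir|A_i|^{m_i}=0,
\end{align*}
which is exactly \eqref{m2}. Hence $\tpsifw-p^{e(F)}\in\cM(\mmr;\gaar)$, as noted in \eqref{mac1}.

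\emph{Step 2 (using $\cK=\emptyset$).} By \eqref{cm}, the hypothesis $\cK=\emptyset$ gives $\cM=\set0$. Therefore $\tpsifw-p^{e(F)}=0$ a.e.\ on $\oi^m$. (If one prefers not to rely on \refT{T1}, one may simply take $\cM=\set0$ as the hypothesis, as in the parenthetical equivalence in the statement.)

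\emph{Step 3 (conclusion).} The implication \eqref{mac2}, from \cite[Proof of Theorem 2.11]{SJ294}, now yields $W=p$ a.e. Thus the graphon property $\tcPm(F;\gaar)$ is \pqr{} in the sense of \refD{D3}. By \refL{Lequiv}, the corresponding property of graph sequences is also \pqr. Since $p\in(0,1)$ was arbitrary, $\tcPm(F;\gaar)$ is \qr.

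There is no real obstacle here. The only nontrivial ingredient is \eqref{mac2}, which is quoted from \cite{SJ294} and depends on $e(F)>0$. The remaining step is the bookkeeping check that \eqref{dw2} is equivalent to the centred condition \eqref{m2}, which holds because the constant $p^{e(F)}$ integrates to $p^{e(F)}\prodir|A_i|^{m_i}$ over $\aamr$.
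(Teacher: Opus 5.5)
Your proof is correct and is essentially the paper's argument. The property gives $\tpsifw-p^{e(F)}\in\cM=\set0$ via \eqref{mac1} and \eqref{cm}, \eqref{mac2} then forces $W=p$ a.e., and your explicit appeal to \refL{Lequiv} to pass back to graph sequences is the step the paper's one-line proof leaves implicit.
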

\begin{proof}
  In this case, \eqref{mac1} implies $W=p$ by \eqref{mac2}.
\end{proof}

In the sequel, 
\refT{T0} should be remembered as soon as we find 
$\cK=\emptyset$ (and thus $\cM=\set0$); we will
sometimes explicitly repeat the conclusion of \refT{T0}, but sometimes this
is left to the reader.
As we will see, the case $\cK=\emptyset$ is
generic, so \refT{T0} covers many cases; nevertheless there are, as noted in
\cite{SJ294} and \cite{Hatami2Li}, exceptional cases, which makes it
interesting to study these exceptional cases further.

More generally, we have the following.
\begin{theorem}\label{TKX}
  Let $\cK$ be the set of bad $k$, for the given $\mmr$ and $\gaar$.
Then $\tcP_\mmr(F;\gaar)$ is a \qrp{} if and only if
\begin{align}\label{tkx}
    \tpsifw-p^{e(F)} \in 
\bigoplus_{k\in \cK} \Lsmk
&\implies
W=p \text{ a.e.\ on } \oi^2,
\intertext{or, equivalently,}
\label{tky}
    \tpsifw-p^{e(F)} \in 
\bigoplus_{k\in \cK} \Lsmk
&\implies
W \text{ is constant a.e.\ on $\oi^2$}.
\end{align}
\end{theorem}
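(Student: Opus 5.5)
The plan is to combine \refL{Lequiv} with the identification \eqref{mac1}+\eqref{cm}. By \refL{Lequiv}, $\tcP_\mmr(F;\gaar)$ is a \pqrp{} for graph sequences if and only if the corresponding graphon property is \pqr{} in the sense of \refD{D3}. By the definition of $\cM$ and the comparison of \eqref{dw2} with \eqref{m2}, a graphon $W$ satisfies $\tcP_\mmr(F;\gaar)$ if and only if \eqref{mac1} holds. (Note that $\tpsifw$ is bounded and symmetric, so $\tpsifw-p^{e(F)}\in\Lsm$.) By \refT{T1} applied to the closed invariant subspace $\cM$, we have $\cM=\bigoplus_{k\in\cK}\Lsmk$ as in \eqref{cm}. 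Hence the graphon property is exactly the hypothesis of \eqref{tkx}. Its being \pqr{} is, by definition, the statement that this hypothesis forces $W=p$ a.e. This gives the equivalence with \eqref{tkx}.

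For the equivalence of \eqref{tkx} and \eqref{tky}, one direction is trivial: if $W=p$ a.e., then $W$ is constant. For the converse, I assume \eqref{tky} and take $W$ satisfying the hypothesis, so $W=c$ a.e. for some constant $c\in\oi$. Then $\tpsifw=c^{e(F)}$ is constant, and $\tpsifw-p^{e(F)}=c^{e(F)}-p^{e(F)}$ lies in $\Lsmx0$.

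I then need to show this constant vanishes. There are two ways to see it:
\begin{itemize}
\item If $0\notin\cK$, the constant lies in $\Lsmx0$ and also in $\bigoplus_{k\in\cK}\Lsmk$, which are orthogonal, so it must be $0$.
\item More directly, the constant lies in $\cM$, so integrating it over any admissible $\aamr$ gives $0$ by \eqref{m2}. Admissible disjoint sets $A_i$ with $|A_i|=\ga_i>0$ exist since $\sum\ga_i\le1$, and the product set has positive measure. Hence the constant is $0$, and in particular $0\notin\cK$ always.
\end{itemize}
So $c^{e(F)}=p^{e(F)}$. Since $e(F)>0$ and $c,p\ge0$, this gives $c=p$, i.e.\ $W=p$ a.e.

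The only slightly delicate point is the terminology. \qrp{} for graph sequences is meant as \pqrp{} with the fixed $p$, and the plan handles it that way. There is also a subtlety if one wants \eqref{tkx} literally uniform over $p$, but $p$ is fixed throughout, so no real obstacle is expected. The main work was already done in \refT{T1} and \refL{Lequiv}.
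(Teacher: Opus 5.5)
Your proposal is correct and follows essentially the same route as the paper. Both arguments identify the property with \eqref{mac1} and use \eqref{cm} to turn it into the hypothesis of \eqref{tkx}; a constant $W=c$ is then forced to equal $p$ because the constant $c^{e(F)}-p^{e(F)}$ cannot lie in $\cM$. The only cosmetic difference is that the paper obtains $\int_{\oi^m}(\tpsifw-p^{e(F)})=0$ from \refL{L0} and orthogonality to $\Ls_0(\oi^m)=\bbC$, whereas you integrate directly over an admissible product set using \eqref{m2}, which is just the proof of \refL{L0} inlined.
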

\begin{proof}
  As said above,
the property $\tP_\mmr(F;\gaar)$ is equivalent to \eqref{mac1}, 
and thus \eqref{cm} yields  the result  with \eqref{tkx}.
Finally, if 
$\tpsifw-p^{e(F)} \in \bigoplus_{k\in \cK} \Lsmk$, then by \refL{L0} below,
$\tpsifw-p^{e(F)}\perp\Ls_0(\oi^m)=\bbC$ and thus 
\begin{align}\label{bak}
\int_{\oi^m}\bigpar{\tpsifw-p^{e(F)}}=0.   
\end{align}
If $W=c$ a.e.\ for some constant $c$,
then $\tpsifw=c^{e(F)} $ a.e.\, and it follows from \eqref{bak} that $c=p$. 
Hence, \eqref{tky} is equivalent to \eqref{tkx}.
\end{proof}

In the remainder of this section we study the set $\cK$, with the hope of
showing that $\cK=\emptyset$ (so that \refT{T0} applies)
when possible, and otherwise to restrict $\cK$
as much as possible.
We return to consequences of \refT{TKX} when $\cK$ is non-empty in the next
section.

\begin{lemma}\label{L0}
  $0$ is never bad.
Hence $\cK\subseteq[m]$.
\end{lemma}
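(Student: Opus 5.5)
Saying that $0$ is bad means $\Lsmx0\subseteq\cM$. By \eqref{d2}, $\Lsmx0=\bbC$ is the space of constant functions, and $\cM$ is a linear subspace. So it is enough to exhibit one constant function that does not lie in $\cM$; the natural choice is $f\equiv1$.

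Take any disjoint subsets $\AAr$ of $\oi$ with $|A_i|=\ga_i$. Such sets exist because $\sumir\ga_i\le1$ by \refAss{AAA}; for instance, consecutive intervals work. For these sets we compute
\begin{align*}
\int_{\aamr}1=\prodir|A_i|^{m_i}=\prodir\ga_i^{m_i}.
\end{align*}
This is strictly positive, since every $\ga_i>0$. Hence \eqref{m2} fails for $f=1$, so $1\notin\cM$. It follows that $\Lsmx0\not\subseteq\cM$, and therefore $0\notin\cK$.

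By \refT{T1}, $\cM=\bigoplus_{k\in\cK}\Lsmk$ with $\cK\subseteq\set{0,\dots,m}$. Since $0\notin\cK$, we get $\cK\subseteq[m]$.

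The only point needing any care is that the admissible family of sets $\AAr$ is nonempty, and this is exactly what the condition $\sumir\ga_i\le1$ guarantees. The fact used in the proof of \refT{TKX}, namely $\cM\perp\bbC$, then follows from the orthogonality of the spaces $\Lsmk$ for distinct $k$.
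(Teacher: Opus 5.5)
Your proof is correct and is essentially the paper's argument. The paper likewise observes that if $0$ were bad then $\bbC=\Lsx0\subseteq\cM$, whereas \eqref{m2} fails for $f=1$; you additionally spell out why admissible sets $\AAr$ exist (since $\sumir\ga_i\le1$) and why $\int 1=\prodir\ga_i^{m_i}>0$, which the paper leaves implicit.
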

\begin{proof}
  If 0 were bad, 
then we would have $\bbC=\Lsx0\subseteq \cM$, but \eqref{m2} does not hold
  for $f=1$.
\end{proof}

We can convert the analytic condition \eqref{m2} into an algebraic condition
by introducing the elementary symmetric polynomials
\begin{align}\label{ele1}
  e_k(\zzm):=\sum_{1\le i_1<\dots<i_k\le m} z_{i_1}\dotsm z_{i_k}.
\end{align}
We define further
\begin{align}\label{ele2}
  \emmr_k(\zzr):=e_k(z_1,\dots,z_1,z_2,\dots,z_2,\dots,z_r,\dots,z_r)
\end{align}
with $z_i$ repeated $m_i$ times. 
Explicitly, by elementary combinatorics,
\begin{align}\label{ele33}
  \emmr_k(\zzr):=
\sum_{\ell_1+\dots+\ell_r=k} \prodir \binom{m_j}{\ell_j}z_j^{\ell_j}.
\end{align}

We then have the following result.
(We explicitly state several closely related versions, partly for later
convenience.) 
\begin{theorem}\label{TK}
  \begin{romenumerate}
    
  \item\label{TKi} 
If\/ $\sumir\ga_i<1$, then $\cK=\emptyset$ and no $k$ is bad.

  \item\label{TKii}
 If\/ $\sumir\ga_i=1$, then 
  the following are equivalent, for every integer $k\in[m]$,

  \begin{romenumerate}
 \renewcommand{\labelenumii}{\textup{(\alph{enumii})}}%
 \renewcommand{\theenumii}{\labelenumii}%
 \renewcommand{\theenumi}{}%

  \item\label{TK1}     
$k$ is bad.
\item \label{TK2}
$\Lsmk\subseteq \cM$.
\item \label{TK3}
Every $f\in\Lsmk$ satisfies \eqref{m2}
for all disjoint subsets $\AAr$ of $\oi$ with $|A_i|=\ga_i$, $i=1,\dots,r$.

\item\label{TK4} 
$\emmr_k(\zzr)=0$ 
for any $(\zzr)\in\bbC^r$ with $\sumir \ga_iz_i=0$.

\item\label{TK6}
The homogeneous polynomial, in $r-1$ variables $z_1,\dots,z_{r-1}$,
\begin{align}\label{ste1}
  \emmr_k\Bigpar{z_1,z_2,\dots,z_{r-1},-\frac{1}{\ga_r}\sum_{i=1}^{r-1}\ga_iz_i}=0
\end{align}
identically.

\item \label{TK5}
The homogeneous polynomial $\emmr_k(\zzr)$ 
of degree $m$ in $r$ variables is divisible
by $\sumir \ga_i z_i$
in the ring $\bbR[z_1,\dots,z_r]$ of polynomials. 
  \end{romenumerate}
\end{romenumerate}
\end{theorem}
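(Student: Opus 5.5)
The plan is to evaluate the integral in \eqref{m2} explicitly for the spanning functions of $\Lsmk$ given by \refL{L3}, and then use \refT{T1} to pass between ``some nonzero function of $\Lsmk$ lies in $\cM$'' and ``$\Lsmk\subseteq\cM$''. Take $h\in L^2_1(\oi)$ and $f=\Phi_{m,k}(h^{\tensor k})$. For disjoint $\AAr$ with $|A_i|=\ga_i$, put $z_i:=\int_{A_i}h$. Expanding $\Phi_{m,k}(h^{\tensor k})=\sum_{I}\prod_{i\in I}h(x_i)$ and integrating over $\aamr$ gives
\begin{align*}
\int_\aamr \Phi_{m,k}(h^{\tensor k}) = e_k(w_1,\dots,w_m)\prod_{j=1}^r \ga_j^{m_j},
\qquad w_i := z_{j(i)}/\ga_{j(i)},
\end{align*}
where $j(i)$ is the block containing coordinate $i$. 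Indeed, each coordinate in $I$ contributes $\int_{A_j}h=z_j$, and each coordinate outside $I$ contributes $|A_j|=\ga_j$. Hence the integral is a nonzero constant times $\emmr_k(z_1/\ga_1,\dots,z_r/\ga_r)$. For general $f\in\Lsmk$, the integral is a continuous linear functional of $f$, so by \refL{L3} condition \eqref{m2} holds on all of $\Lsmk$ if and only if it holds for all $f=\Phi_{m,k}(h^{\tensor k})$ with $h\in L^2_1$.

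\textbf{Admissible vectors $(z_1,\dots,z_r)$.}

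For (ii), where $\sum\ga_i=1$, the sets $A_i$ partition $\oi$ up to a null set. So $\sum z_i=\int_0^1 h=0$, and conversely any $(z_i)\in\bbC^r$ with $\sum z_i=0$ is achieved by taking $h$ constant $z_i/\ga_i$ on $A_i$. Setting $u_i=z_i/\ga_i$, the constraint becomes $\sum\ga_iu_i=0$, and the integral is a constant times $\emmr_k(u)$. Thus (c)$\iff$(d).

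For the remaining equivalences: (a)$\iff$(b) is the definition of $\cK$ via \eqref{cm} and \refT{T1}, and (b)$\iff$(c) is the definition of $\cM$. For (d)$\iff$(e), parametrize the hyperplane $\sum\ga_iu_i=0$ by $u_1,\dots,u_{r-1}$ (note $\ga_r>0$). For (e)$\iff$(f), use division by the linear form $\sum\ga_iz_i$, which is monic in $z_r$ up to a constant: the remainder is exactly the substituted polynomial in \eqref{ste1}. Alternatively, a polynomial vanishing on an irreducible hypersurface is divisible by its defining form. The divisibility is over $\bbR$ because all coefficients are real.

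\textbf{Part (i).}

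For (i), where $\sum\ga_i<1$, there is leftover measure. Given $h$ with $\int h=0$, the values $z_i=\int_{A_i}h$ can be arbitrary: put $h=z_i/\ga_i$ on $A_i$ and compensate on the complement. So for $k$ to be bad, $\emmr_k(u)$ would have to vanish for all $u\in\bbC^r$. But $\emmr_k(1,\dots,1)=\binom mk>0$, so no $k\ge1$ is bad, and $k=0$ is excluded by \refL{L0}. Hence $\cK=\emptyset$.

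\textbf{Main obstacle.}

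The main point needing care is the reduction from ``every $f\in\Lsmk$'' to tensor powers. Density from \refL{L3} together with continuity of $f\mapsto\int_\aamr f$ on $L^2$ handles it. Beyond that, the only delicate step is the realizability argument for the vectors $(z_i)$, in particular in case (i) when the complement of $\bigcup A_i$ is needed to absorb the mean of $h$.
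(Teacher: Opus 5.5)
Your proposal is correct and follows essentially the same route as the paper. Both arguments reduce to the functions $\Phi_{m,k}(h^{\otimes k})$ via \refL{L3} and closedness/continuity, and both compute the integral as $\prod_j\ga_j^{m_j}$ times $\emmr_k$ of the normalized block averages. Both then identify the realizable vectors: these are arbitrary when $\sum\ga_i<1$, and exactly the hyperplane $\sum\ga_iz_i=0$ when $\sum\ga_i=1$. Both finish with substitution and division with remainder. The only cosmetic difference is that you connect (f) to (e) by recognizing the remainder as the substituted polynomial, while the paper connects (f) to (d) by showing the remainder vanishes on the hyperplane.
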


\begin{proof}
We begin by considering both cases \ref{TKi} and \ref{TKii} together, thus
letting $\sumir\ga_i\le1$ until further notice.

\ref{TK1}$\iff$\ref{TK2}:
From \eqref{t1} and the definition of bad.

\ref{TK2}$\iff$\ref{TK3}:
By the definition  of $\cM$.

\ref{TK2}$\iff$\ref{TK4}:
Recall that $\cM$ is a closed subspace. Hence,
by \refL{L3}, \ref{TK2} is equivalent to \eqref{m2} holding for
$f=\Phi_{m,k}(h^{\tensor k})$ for every $h\in L^2_1(\oi)$.
We have, by \eqref{Phi}
\begin{align}\label{jun6}
  \Phi_{m,k}(h^{\tensor k})(x_1,\dots,x_m)
=\sum_{I\in\ZZ mk}h^{\tensor k}(x_I)
=\sum_{I\in\ZZ mk}\prod_{i\in I} h(x_i)
.\end{align}
Hence,
for any subsets $\aam\subseteq \oi$ with positive measures $\ga_i=|A_i|$,
\begin{align}\label{jun61}
  \int_{A_1\times\dots\times A_m}  \Phi_{m,k}(h^{\tensor k})
&= \sum_{I\in\ZZ mk}\int_{A_1\times\dots\times A_m}\prod_{i\in I} h(x_i)
\dd x_1\dotsm\dd x_m
\notag\\&
= \sum_{I\in\ZZ mk}\prod_{i\in I} \int_{A_i} h(x_i)\dd x_i
\cdot \prod_{i\notin I} |A_i|
\notag\\&
= \prodim \ga_i\cdot
\sum_{I\in\ZZ mk}\prod_{i\in I} \frac{1}{\ga_i}\int_{A_i} h(x_i)\dd x_i
\notag\\&
= \prodim \ga_i\cdot e_k(z_1,\dots,z_m)
,\end{align}
where we define
\begin{align}\label{zh}
  z_i:= \frac{1}{\ga_i}\int_{A_i} h(x) \dd x.
\end{align}
For subsets $A_1,\dots,A_r$ with each $A_i$ repeated $m_i$ times, we thus
obtain, 
\begin{align}
 \int_{A_1^{m_1}\times\dots\times A_r^{m_r}} \Phi_{m,k}(h^{\tensor k})
= \prodir \ga_i^{m_i}\cdot \emmr_k(z_1,\dots,z_r)
,\end{align}
Hence, condition \ref{TK2} is equivalent to
\begin{align}\label{jun7}
\emmr_k(z_1,\dots,z_r)=0,   
\end{align}
with $z_i$ given by \eqref{zh}, for every $h\in
L^2_1(\oi)$ and any disjoint subsets $\AAr$ of $\oi$ with $|A_i|=\ga_i$.

We now consider \ref{TKi} and \ref{TKii} separately.
In \ref{TKi}, i.e., the case  $\sumir\ga_i<1$,
we choose disjoint sets $A_i\subset\oi$ 
with $|A_i|=\ga_i$.
Then, for any $\zzr\in\bbC$, we may define $h$ by $h(x):=z_i$ for $x\in A_i$,
and let  
$h(x):=c$ for $x\in\oi\setminus\bigcup_{i=1}^r A_i$, where $c$ is chosen
such that $\intoi h=0$ and thus $h\in L^2_1(\oi)$.
Hence, if \ref{TK2} holds, then \eqref{jun7} holds for any
$\zzr\in\bbC$.
In particular, $\emmr_k(1,\dots,1)=0$, which contradicts the definition
\eqref{ele1}--\eqref{ele2}. 
This contradiction shows that \ref{TK2} cannot hold, which shows \ref{TKi}
by the equivalence \ref{TK1}$\iff$\ref{TK2} proved above.

In the remainder of the proof we consider case \ref{TKii}, and assume thus
$\sumir\ga_i=1$. We still choose  disjoint sets $A_i\subset\oi$ 
with $|A_i|=\ga_i$, but now $\bigcup_{i=1}^r A_i=\oi$ (up to a null set).
Hence, \eqref{zh} implies, for $h\in L^2_1(\oi)$,
\begin{align}\label{jun8}
0=  \intoi h = \sumir\int_{A_i} h = \sumir \ga_i z_i,
\end{align}
Conversely, if $\zzr\in\bbC$ with $\sumir\ga_i z_i=0$, then we may again
define $h$ by $h(x)=z_i$ for $x\in A_i$, and then $h\in L^2_1(\oi)$.

Consequently, the argument above shows that \ref{TK2} is equivalent to
\eqref{jun7} for all $\zzr$ with $\sumir\ga_iz_i=0$, which is \ref{TK4}.

\ref{TK4}$\iff$\ref{TK6}:
The condition $\sumir \ga_iz_i=0$ is equivalent to
\begin{align}\label{ste2}
  z_r=-\frac{1}{\ga_r}\sumiri \ga_iz_i,
\end{align}
and we obtain \eqref{ste1} by substituting \eqref{ste2} into $\emmr_k(\zzr)=0$.

\ref{TK4}$\implies$\ref{TK5}:
Regard $\emmr_k(\zzr)$ as a polynomial in $z_r$ with coefficients in the
ring $\bbR[z_1,\dots,z_{r-1}]$. By division with the linear polynomial
$\sumir\ga_iz_i=\ga_r\bigpar{z_r+\sum_{i=1}^{r-1}(\ga_i/\ga_r)z_i}$,
we see that there exist polynomials $Q(z_1,\dots,z_{r-1},z_r)$ and
$R(z_1,\dots,z_{r-1})$, in $r$ and $r-1$ variables, respectively, such that
\begin{align}\label{jun18}
\emmr_k(\zzr)=
  Q(z_1,\dots,z_{r})\sumir\ga_iz_i + R(z_1,\dots,z_{r-1}).
\end{align}
If \ref{TK4} holds, then \eqref{jun18} implies that
$R(z_1,\dots,z_{r-1})=0$ when $\sumir\ga_iz_i=0$, which means that
$R(z_1,\dots,z_{r-1})=0$ for any $z_1,\dots,z_{r-1}=0$; hence, $R=0$
as a polynomial, and \eqref{jun18} shows \ref{TK5}.

\ref{TK5}$\implies$\ref{TK4}:
Obvious.
\end{proof}

\begin{corollary}\label{CKi}
If\/ $\sumir\ga_i<1$, 
then the  property $\tcPm(F;\gaar)$ is \qr.
\end{corollary}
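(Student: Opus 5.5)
The plan is to combine \refT{TK}\ref{TKi} with \refT{T0}, and then move from graphons back to graph sequences using \refL{Lequiv}.

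First, assume $\sumir\ga_i<1$. By \refT{TK}\ref{TKi} there is then no bad $k$, so $\cK=\emptyset$. Equivalently, by \eqref{cm}, the invariant subspace $\cM(\mmr;\gaar)$ defined by \eqref{m2} is $\set0$.

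The idea behind \refT{TK}\ref{TKi} is worth recalling, since it is the substantive input. Suppose some $k\in[m]$ were bad. Then, by \refL{L3}, every function $\Phi_{m,k}(h^{\tensor k})$ with $h\in L^2_1(\oi)$ would satisfy \eqref{m2}. The sets $A_i$ do not cover $\oi$, so $h$ can be chosen to equal $z_i=1$ on each $A_i$ and a compensating constant on the complement. This gives $\emmr_k(1,\dots,1)=\binom mk\ne0$, a contradiction. The case $k=0$ is excluded by \refL{L0}.

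Next, apply \refT{T0}. Let $W$ be any graphon satisfying the graphon property $\tcPm(F;\gaar)$. This property is exactly \eqref{mac1}, i.e.\ $\tpsifw-p^{e(F)}\in\cM=\set0$, so $\tpsifw=p^{e(F)}$ a.e. By \eqref{mac2} (from \cite{SJ294}) this forces $W=p$ a.e. Hence the graphon property is \pqr{} in the sense of \refD{D3}.

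Finally, \refL{Lequiv} transfers this to graph sequences: the property $\tcPm(F;\gaar)$ of graph sequences is \pqr, and in particular \qr.

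No step here is a real obstacle; all the weight lies in the earlier results. These are the irreducibility decomposition \refT{T1}, which underlies \eqref{cm}, and the implication \eqref{mac2}. The only point needing care is that the corollary is really the conjunction of \refT{TK}\ref{TKi} and \refT{T0}, together with the graph/graphon equivalence.
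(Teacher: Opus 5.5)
Your proposal is correct and follows the paper's argument: the paper's proof is exactly \refT{TK}\ref{TKi} (giving $\cK=\emptyset$, hence $\cM=\set0$) combined with \refT{T0}. Your extra appeal to \refL{Lequiv}, to pass from the graphon property to graph sequences, is a harmless addition that the paper leaves implicit.
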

\begin{proof}
  An immediate consequence of Theorems \ref{TK}\ref{TKi} and \ref{T0}.
\end{proof}

In the sequel, we often consider  only the case 
$\sumir\ga_i=1$, since otherwise \refT{TK}\ref{TKi} and \refC{CKi} apply.

We consider a number of special cases, beginning with the cases $r=1$ and
$r=m$ treated earlier by various authors.
The case $r=1$ is quite special, and only the case $\ga_1=1$ is not covered
by \refT{TK}\ref{TKi} and \refC{CKi}.

\begin{corollary}\label{Cr=1}
  If $r=1$, then the following holds.
  \begin{romenumerate}
  \item\label{Cr=1i} 
If\/ $\ga_1<1$, then $\cK=\emptyset$ and no $k$ is bad.
  \item\label{Cr=1ii} 
If\/ $\ga_1=1$, then $\cK=[m]$ and every $k>0$ is bad.
  \end{romenumerate}
\end{corollary}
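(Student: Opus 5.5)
Both parts follow directly from \refT{TK}, specialised to $r=1$; we only need to check the polynomial condition in part \ref{TKii} of that theorem.

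For \ref{Cr=1i}, with $r=1$ the hypothesis $\ga_1<1$ is exactly $\sumir\ga_i<1$. \refT{TK}\ref{TKi} then gives $\cK=\emptyset$ directly, and there is nothing more to prove.

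For \ref{Cr=1ii}, with $r=1$ we have $m_1=m$ and $\ga_1=1$, so $\sumir\ga_i=1$ and \refT{TK}\ref{TKii} applies. We verify condition \ref{TK4} for each $k\in[m]$. The constraint $\sumir\ga_iz_i=0$ reads $z_1=0$. By \eqref{ele33},
\begin{align*}
e^{(m)}_k(z_1)=\binom mk z_1^k ,
\end{align*}
and for $k\ge1$ this vanishes at $z_1=0$. So \ref{TK4} holds and every $k\in[m]$ is bad. Equivalently, $z_1$ divides $\binom mk z_1^k$, which is condition \ref{TK5}.

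Since $0$ is never bad by \refL{L0}, we conclude $\cK=[m]$. As a direct check, when $A_1=\oi$ up to a null set, every $f\in\Lsmk$ with $k\ge1$ has $\hf(0,\dots,0)=0$ by \refL{LF}. Hence $\int_{\oi^m}f=0$, which is \eqref{m2}.

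There is no real obstacle here. The only point needing care is that case \ref{TKii} of \refT{TK} requires $\sumir\ga_i=1$ exactly, and this is what $\ga_1=1$ gives.
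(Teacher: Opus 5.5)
Your proof is correct and is essentially the paper's argument. Part (i) is \refT{TK}\ref{TKi}, and part (ii) follows from \refT{TK}\ref{TKii} via $e^{(m)}_k(z)=\binom mk z^k$; the paper also mentions the same direct check that \eqref{m2} reduces to $\int_{\oi^m}f=0$, i.e.\ $f\perp 1$.
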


\begin{proof}
  If $\ga_1<1$, this follows by \refT{TK}\ref{TKi}.

The result for $\ga_1=1$ follows also from \refT{TK}, 
for example using \refTK{TK5} since $e^{(m)}_k(z)=\binom mk z^k$;
it can also easily be seen directly from \eqref{m2} which reduces to
$\int_{\oi^m}f=0$, i.e., $f\perp 1$.
\end{proof}

\begin{remark}\label{Rr=1}
  \refC{Cr=1} shows by \refT{T0} that $\cP_m(F;\ga_1)=\tcP_m(F;\ga_1)$ 
is a \qrp{} when $0<\ga_1<1$,
which as said above was proved by 
\citet{Shapira} and \citet{Yuster}. 
The case $\ga_1=1$ means that in \refD{DP} we consider only $U_1=V(G_n)$ and
thus count all subgraphs of $G_n$ isomorphic to $F$. 
It is well-known that the resulting property $\cP_m(F;1)$  is
not a \qrp{} for any fixed $F$; see \cite{ChungGW:quasi} and \cite{SS:nni} for
counterexamples. 
To see this
is easy using the graphon version
in \refD{DW}. It suffices to note that \eqref{dw1} reduces to the single
equation
\begin{align}\label{ele1p}
  \int_{\oi^m}\psifw = p^{e(F)}.
\end{align}
Every graphon $W$ satisfies this for some $p\in\oi$, and conversely, given
$p\in(0,1)$, we can easily find a non-constant graphon $W$ such that
\eqref{ele1p} holds.

On the other hand, the conjunction of $\cP_{|F|}(F;1)$
for certain sets of graphs $F$ may be a \qrp; one well-known example, 
shown already in \cite{ChungGW:quasi}, 
is $\set{\sC_4,\sK_2}$;
furthermore, $\cP_{|F|}(F;1)$ for \emph{all} graphs $F$ is the standard
definition of $G_n\to p$ 
in graph limit sense \cite{LSz,BCLSV1,Lovasz}.
\end{remark}

Also the case $r=m$ is simple.
\begin{corollary}\label{Cr=m}
Suppose $m=r\ge2$ (so $m_i=1\,\forall i$). 
Then no $k\ge2$ is bad, and we have
\begin{align}\label{cr=m}
  \cK=
  \begin{cases}
    \set1, 
\text{ and thus }\cM=\Ls_1(\oi^m),
& \text{if\/ }(\gaam)=(\frac1m,\dots,\frac1m);\\
\emptyset, \text{ and thus }\cM=\set0,& \text{otherwise}.
  \end{cases}
\end{align}
\end{corollary}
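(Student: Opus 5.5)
If $\sumim\ga_i<1$, then \refT{TK}\ref{TKi} gives $\cK=\emptyset$ at once. So we may assume $\sumim\ga_i=1$ and use the criterion \refTK{TK4}: $k$ is bad if and only if $e_k(\zzm)=0$ for all $(\zzm)\in\bbC^m$ with $\sumim\ga_iz_i=0$. Here $r=m$ and all $m_i=1$, so $\emmr_k=e_k$ is the ordinary elementary symmetric polynomial. We then treat $k=1$ and $k\ge2$ separately.

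For $k\ge2$ we exhibit a point on the hyperplane $\sumim\ga_iz_i=0$ where $e_k\neq0$. The simplest choice that I expect to work is $z_1=\dots=z_{m-1}=1$ and $z_m=-\frac1{\ga_m}\sum_{i<m}\ga_i=-(1-\ga_m)/\ga_m=:-t$, with $t>0$. Then
\begin{align*}
e_k=\binom{m-1}{k}-t\binom{m-1}{k-1}.
\end{align*}
This could vanish for special $\ga$, so it is safer to vary the point. Take $z=(s,\dots,s,u,v)$ with $\ga_{m-1}u+\ga_mv=-s\sum_{i<m-1}\ga_i$. Alternatively (and cleaner) use \refTK{TK5}: $e_k$ is multilinear, i.e.\ of degree at most $1$ in each variable. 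Divisibility $e_k=L\cdot Q$ with $L=\sumim\ga_iz_i$, all $\ga_i\neq0$, would force $Q$ to have degree $0$ in each variable, since degrees in each $z_i$ add. Hence $Q$ is a constant and $e_k$ has total degree $1$, i.e.\ $k=1$. This contradicts $k\ge2$, so no $k\ge2$ is bad. This multilinearity argument is the key step and I expect it to settle the case $k\ge2$ without computation.

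For $k=1$, $e_1=\sumim z_i$ is divisible by $\sumim\ga_iz_i$ exactly when the two linear forms are proportional. That means all $\ga_i$ are equal, and with $\sumim\ga_i=1$ this gives $\ga_i=1/m$. Hence $1\in\cK$ if and only if $(\gaam)=(\frac1m,\dots,\frac1m)$. The description of $\cM$ then follows from \eqref{cm}, together with $0\notin\cK$ from \refL{L0}.
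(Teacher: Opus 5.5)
Your argument is correct, but for $k\ge2$ it takes a different route from the paper.

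\textbf{The paper's route.} The paper works with \refTK{TK4} and exhibits an explicit point on the hyperplane: $z_1=\dots=z_{k-1}=1$, $z_k=-\sum_{i<k}\ga_i/\ga_k$, and $z_i=0$ for $i>k$. The zeros kill every monomial of $e_k$ except $z_1\dotsm z_k$, which is nonzero. This is exactly the repair of your first candidate point, which, as you noticed, fails when $\ga_m=k/m$. Both of your tentative point choices can simply be deleted from the write-up.

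\textbf{Your route.} You use \refTK{TK5} and count degrees. The polynomial $e_k$ has degree at most $1$ in each $z_i$. The form $L=\sumim\ga_iz_i$ has degree exactly $1$ in each $z_i$, because all $\ga_i>0$. Since $\deg_{z_i}$ is additive in the domain $\bbR[z_1,\dots,z_m]$, the nonzero cofactor must be constant, and hence $k=1$.

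\textbf{Comparison.} The paper's version is a one-line concrete witness and needs nothing about polynomial rings. Yours is computation-free and explains structurally why only $k=1$ survives. It also extends. For general $(\mmr)$ it gives $\deg_{z_i}Q\le m_i-1$. Hence no monomial of $LQ$ can have degree $m_{r-1}$ in $z_{r-1}$ and degree $m_r$ in $z_r$ simultaneously. But $\emmr_k$ has such monomials with positive coefficients whenever $k\ge m_{r-1}+m_r$, and this recovers \refC{C6}.

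Your treatment of $k=1$, and the final deduction of $\cM$ from \eqref{cm} and \refL{L0}, agree with the paper.
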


Combined with \refT{T0}, this gives a new proof 
that $\tcP_{1,\dots,1}(F;\gaam)$ is a \qrp{}
when $(\gaam)\neq(\frac1m,\dots,\frac1m)$,
which is part of
\cite[Theorem 2.11]{SJ294}.

\begin{proof}
First, assume $2\le k\le r=m$.
  Let $z_i:=1$ for $1\le i<k$, $z_k=-\sum_{i=1}^{k-1}\ga_i/\ga_k$, 
and $z_i:=0$ for $k<i\le r$. Then $\sumir \ga_i z_i=0$ and
\begin{align}
  e^{1,\dots,1}_k(\zzr)
= e_k(\zzr) = z_1\dotsm z_k\neq0.
\end{align}
Hence \refTK{TK4} does not hold, and thus $k$ is not bad.

For $k=1$ we have $e^{(1,\dots,1)}_1(\zzr)=e_1(\zzr)=z_1+\dots+z_r$.
We may assume $\sumir\ga_i=1$ by \refT{TK}\ref{TKi},
and then it follows from \refTK{TK5} 
that 1 is bad if and only if
$(\gaam)=(\frac1m,\dots,\frac1m)$, see also \refC{C1} below.
\end{proof}

The result for $k=1$ in \refC{Cr=m} is easily generalized to any $r$ and $m$.

\begin{corollary}\label{C1}
$1$ is bad if and only if
\begin{align}\label{c1}
  \ga_i=\frac{m_i}{m},
\qquad i\in[r].
\end{align}
\end{corollary}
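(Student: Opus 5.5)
First dispose of the case $\sumir\ga_i<1$. In that case \refT{TK}\ref{TKi} says no $k$ is bad, so in particular $1$ is not bad. The condition \eqref{c1} also fails, because summing it over $i$ would force $\sumir\ga_i=\sumir m_i/m=1$. Hence both sides of the equivalence are false. From now on we assume $\sumir\ga_i=1$ and use the criterion \refTK{TK5} with $k=1$.

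By \eqref{ele33}, for $k=1$ the only terms are those with a single $\ell_j=1$ and the rest zero. This gives
\begin{align*}
\emmr_1(\zzr)=\sumir m_iz_i .
\end{align*}
This is a homogeneous linear polynomial whose coefficients $m_i$ are all nonzero. By \refTK{TK5}, $1$ is bad exactly when this linear form is divisible by $\sumir\ga_iz_i$ in $\bbR[z_1,\dots,z_r]$.

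Both polynomials are of degree one, so divisibility means the quotient is a polynomial of degree zero. Thus $1$ is bad if and only if $\sumir m_iz_i=c\sumir\ga_iz_i$ for some nonzero constant $c$, that is, $m_i=c\ga_i$ for every $i$. Summing over $i$ and using $\sumir\ga_i=1$ gives $c=\sumir m_i=m$, so $\ga_i=m_i/m$, which is \eqref{c1}. Conversely, if \eqref{c1} holds, then $\sumir\ga_i=1$ automatically, and $\sumir m_iz_i=m\sumir\ga_iz_i$ is divisible by $\sumir\ga_iz_i$, so $1$ is bad.

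There is no real obstacle here. The only care needed is to treat the case $\sumir\ga_i<1$ separately, since \refTK{TK5} is stated only under the assumption $\sumir\ga_i=1$; the observation that \eqref{c1} forces $\sumir\ga_i=1$ handles this.
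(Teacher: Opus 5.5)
Your proof is correct and is essentially the paper's argument: reduce to $\sum_i\alpha_i=1$ via Theorem~\ref{TK}\ref{TKi}, compute $e_1^{(m_1,\dots,m_r)}(z_1,\dots,z_r)=\sum_i m_iz_i$, and use the divisibility criterion to get $m_i=c\,\alpha_i$, with summation forcing $c=m$. Your remark that \eqref{c1} itself forces $\sum_i\alpha_i=1$ justifies the reduction step a little more explicitly than the paper's one-line ``we may assume''.
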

\begin{proof}
We may assume $\sumir\ga_i=1$ by \refT{TK}\ref{TKi}.
  For $k=1$, the definitions \eqref{ele1}--\eqref{ele2} yield
  \begin{align}\label{c1aq}
    \emmr_1(\zzr)=\sumir m_i z_i.
  \end{align}
Hence \refT{TK}\ref{TKii}\ref{TK5} holds if and only if $m_i=c\ga_i$ for
some $c\in\bbC$. Since $\sumir\ga_i=1$ and $\sumir m_i=m$, we must have
$c=m$, which yields \eqref{c1}.
\end{proof}

For $r=2$, we obtain the following explicit result, shown (in an equivalent
formulation) by \citet{Hatami2Li}.

\begin{corollary}[{\cite[Theorem 3.7]{Hatami2Li}}]\label{Cr=2}
  Let $r=2$ and $\ga_1+\ga_2=1$. Then $k\in[m]$ is bad if and only if
  \begin{align}\label{ele3}
\sum_{\ell=0}^k\binom{m_1}{k-\ell}\binom{m_2}{\ell}
\parfrac{-\ga_1}{1-\ga_1}^\ell
=0.
  \end{align}
\end{corollary}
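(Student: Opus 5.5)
This is a direct specialization of \refT{TK}\ref{TKii}, using the equivalence \ref{TK1}$\iff$\ref{TK6}. With $r=2$, condition \eqref{ste1} concerns a homogeneous polynomial of degree $k$ in the single variable $z_1$. We substitute $z_2=-\frac{\ga_1}{\ga_2}z_1=-\frac{\ga_1}{1-\ga_1}z_1$, using $\ga_2=1-\ga_1$. This gives
\begin{align*}
\emmr_k\Bigpar{z_1,-\frac{\ga_1}{1-\ga_1}z_1}
=z_1^k\sum_{\ell_1+\ell_2=k}\binom{m_1}{\ell_1}\binom{m_2}{\ell_2}\parfrac{-\ga_1}{1-\ga_1}^{\ell_2}.
\end{align*}
The equality uses the explicit formula \eqref{ele33} with $r=2$. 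Here the factor $z_1^{\ell_1}z_2^{\ell_2}$ becomes $z_1^{\ell_1+\ell_2}\bigpar{-\ga_1/(1-\ga_1)}^{\ell_2}=z_1^k\bigpar{-\ga_1/(1-\ga_1)}^{\ell_2}$.

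Next, put $\ell:=\ell_2$, so that $\ell_1=k-\ell$, and let $\ell$ range over $0,\dots,k$. Terms where $k-\ell>m_1$ or $\ell>m_2$ vanish because the binomial coefficients are zero, so nothing is lost by summing over the full range. The coefficient of $z_1^k$ is then exactly the left-hand side of \eqref{ele3}.

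A polynomial $c\,z_1^k$ vanishes identically if and only if $c=0$. Hence \eqref{ste1} holds identically if and only if \eqref{ele3} holds. By \refT{TK}\ref{TKii}, this is equivalent to $k$ being bad.

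There is no real obstacle here. The only points to check are the bookkeeping of the index conventions in \eqref{ele33} and the observation that $\ga_2>0$, so the substitution is legitimate.
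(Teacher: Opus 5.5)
Your proposal is correct and follows the paper's proof essentially verbatim: the paper also specializes \eqref{ele33} to $r=2$ and uses homogeneity to reduce condition \refTK{TK6} to $e^{(m_1,m_2)}_k(1,-\ga_1/\ga_2)=0$. With $\ga_2=1-\ga_1$, this is exactly \eqref{ele3}.
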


\begin{proof}
In the case $r=2$, \eqref{ele33} yields
\begin{align}\label{ele4}
  e_k^{(m_1,m_2)}(z_1,z_2)
=\sum_{\ell=0}^k\binom{m_1}{k-\ell}\binom{m_2}{\ell}
z_1^{k-\ell}z_2^{\ell}
.\end{align}
By homogeneity, \refTK{TK6} holds if and only if
$  e^{(m_1,m_2)}_k(1,-\ga_1/\ga_2)=0$, which by \eqref{ele4}
is equivalent to \eqref{ele3}.
\end{proof}

\begin{example}\label{Er=2a}
\citet[Corollary 3.8]{Hatami2Li} noted that in the special case $m_2=1$, 
it follows from \eqref{ele3} that $k\in[m-1]$ is bad if and only if 
$\ga_1=1-k/m$ (and thus $\ga_2=k/m$).
\end{example}

\begin{example}\label{Er=2}
  Let $m=4$, $r=2$, and $m_1=m_2=2$, and assume $\ga_1+\ga_2=1$. 
\refC{C1} (or \eqref{ele3}) shows that $1$ is bad if and only if
$\ga_1=\frac12$. 
For $k=2$, \eqref{ele3} yields, with $\gb=\ga_1/\ga_2$,
\begin{align}
    \gb^2-4\gb+1=0
\end{align}
with the roots $\gb=2\pm\sqrt3$, and it follows that 2 is bad if and only if
$\ga_1=(3\pm\sqrt 3)/6$.
For $k=3$, \eqref{ele3} yields $2\gb^2-2\gb=0$ which yields $\gb=1$ and 
thus $\ga_1=\frac12$. 
For $k=4$, \eqref{ele3} never holds, so 4 is not bad.
(See also \refCs{Cm} and \ref{C6} below.)
Note that if $\ga_1=\frac12$, then both 1 and 3 are bad.

Consequently, if $\ga_1\notin\set{\frac12,\frac12\pm\frac{\sqrt3}6}$,
then $\cM=\set0$ and thus $\tcP_{2,2}(F;\ga_1,1-\ga_1)$ is a \qrp{} for every 
graph $F$ with $|F|=4$ and $e(F)>0$.
\end{example}

For larger $r$ we have some only partial results.
We begin by showing that the case $\cM=\set0$
is generic in the sense that there are only isolated exceptions.

\begin{corollary}\label{Cfinite}
  For any given $m$, $r$, and $m_1,\dots,m_r$, there is at most a finite
  number of vectors $(\ga_1,\dots,\ga_r)\in(0,1)^r$ such that
  $\cK\neq\emptyset$. For any other $(\gaar)$ we thus have $\cM=\set0$ and 
then the property $\tcPm(F;\gaar)$ is \qr.
\end{corollary}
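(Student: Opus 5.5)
By \refT{TK}\ref{TKi}, $\cK=\emptyset$ whenever $\sumir\ga_i<1$. So we only need to consider vectors with $\sumir\ga_i=1$. Since $\cK\subseteq[m]$ by \refL{L0}, and $[m]$ is finite, it suffices to prove the following: for each fixed $k\in[m]$, only finitely many $(\gaar)\in(0,1)^r$ with $\sumir\ga_i=1$ make $k$ bad. The final sentence of the corollary then follows from \refT{T0}.

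Fix $k$. By \refTK{TK5}, $k$ is bad exactly when the linear form $\ell_\ga(z):=\sumir\ga_iz_i$ divides the homogeneous polynomial $P:=\emmr_k(\zzr)$ in $\bbR[\zzr]$. This $P$ depends only on $\mmr$ and $k$, not on $\ga$. Moreover, $P\neq0$: by \eqref{ele33} its coefficients are products of binomial coefficients, so they are nonnegative, and $P(1,\dots,1)=\binom mk>0$.

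Every polynomial ring over $\bbC$ is a unique factorization domain, and linear forms are irreducible. Hence $P$ has only finitely many linear factors up to scalar multiples, at most $\deg P=k$ of them. Suppose $\ell_\ga$ and $\ell_{\ga'}$ are proportional and both satisfy the normalization $\sumir\ga_i=\sumir\ga'_i=1$. Then $\ga=\ga'$. Consequently, each projective class of linear factors of $P$ contributes at most one admissible vector $\ga$. So at most $k$ vectors make $k$ bad, and at most $\sum_{k=1}^m k=\binom{m+1}2$ vectors $\ga$ give $\cK\neq\emptyset$.

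There is no real obstacle in this argument. The only points to check are that $P$ is nonzero, which rules out the situation where every $\ga$ is bad, and that the normalization $\sumir\ga_i=1$ fixes the scalar multiple of each linear factor. Both are immediate from the positivity of the coefficients in \eqref{ele33}.
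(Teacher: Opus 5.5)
Your proof is correct, but it takes a different route from the paper's.

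\emph{The paper's route.} For $r=2$ it uses the explicit univariate criterion \eqref{ele3} of \refC{Cr=2}: a nonzero polynomial in $-\ga_1/(1-\ga_1)$ has only finitely many roots. For $r\ge3$ it restricts to $z_2=\dots=z_r$. This collapses $\emmr_k$ to $e^{(m_1,m-m_1)}_k(z_1,z_2)$ and again gives a nontrivial univariate equation. So each coordinate $\ga_i$, treated separately by symmetry, takes only finitely many values.

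\emph{Your route.} You apply unique factorization in $\bbR[\zzr]$ directly to the divisibility criterion \refTK{TK5}. A nonzero form of degree $k$ has at most $k$ linear factors up to scalars, and the normalization $\sumir\ga_i=1$ fixes the scalar.

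\emph{What each buys.} Your argument is uniform in $r$ and needs neither \refC{Cr=2} nor the symmetry trick. It also gives a sharper explicit count: at most $k$ vectors make a given $k$ bad, so at most $\binom{m+1}2$ exceptional vectors in total. The paper only gets a cruder product-over-coordinates count. Your viewpoint also matches the geometric picture the paper sketches before \refConj{Conj:r>2}. The paper's argument uses only that a nonzero one-variable polynomial has finitely many roots, and \eqref{ele3} produces explicit candidate values for each $\ga_i$.

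A minor point: you invoke unique factorization over $\bbC$, but \refTK{TK5} states divisibility in $\bbR[\zzr]$. Both rings are UFDs, and divisibility over $\bbR$ implies divisibility over $\bbC$, so your count is unaffected.
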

\begin{proof}
By \refT{TK}\ref{TKi} we may assume $\sumir\ga_i=1$.

The case $r=1$ is thus trivial, with $\ga_1=1$.

If $r=2$, then \refC{Cr=2} shows that for each $k\in[m]$,
if $k$ is bad then there is a non-trivial
polynomial equation for $-\ga_1/(1-\ga_1)$; hence there is only a finite
number of possible $\ga_1$, 
and thus a finite number of $(\ga_1,\ga_2)=(\ga_1,1-\ga_1)$.

If $r\ge3$, consider only $\zzr$ with $z_2=\dots=z_r$. 
Then $\emmr_k(z_1,\dots,z_r) =  e^{(m_1,m')}_k(z_1,z_2)$ with $m':=m-m_1$,
and the condition $\sumir \ga_i z_i=0$ becomes $\ga_1z_1+(1-\ga_1)z_2=0$.
Hence, assuming $k$ is bad, we obtain as in \refC{Cr=2} the polynomial
equation \eqref{ele3}, with $m_2$ replaced by $m'=m-m_1$, and thus only a
finite number of possible $\ga_1$. The same holds by symmetry for every $\ga_i$.

The final sentence follows by \eqref{cm} and \refT{T0}.
\end{proof}

\begin{corollary}\label{Cm}
  If\/ $r\ge2$, 
then $m$ is not bad.
\end{corollary}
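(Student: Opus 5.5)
By \refT{TK}\ref{TKi} we may assume $\sumir\ga_i=1$; otherwise no $k$ is bad at all. We then use criterion \refTK{TK5}, or equivalently \ref{TK4}. For $k=m$, the definition \eqref{ele2} gives $e_m(z_1,\dots,z_1,\dots,z_r,\dots,z_r)$, which is the product of all $m$ arguments. Hence
\begin{align*}
  \emmr_m(\zzr)=\prodir z_i^{m_i},
\end{align*}
which also agrees with \eqref{ele33}, since only $\ell_j=m_j$ for all $j$ contributes.

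The plan is to show that this monomial is not divisible by the linear form $\sumir\ga_iz_i$ when $r\ge2$. Since $\bbR[z_1,\dots,z_r]$ is a unique factorization domain, the irreducible factors of $\prodir z_i^{m_i}$ are, up to scalars, only the variables $z_i$. The linear form $\sumir\ga_iz_i$ has at least two nonzero coefficients, because $r\ge2$ and every $\ga_i>0$. It is therefore irreducible (it has degree one) and not associate to any $z_i$. Consequently \ref{TK5} fails, and $m$ is not bad by \refT{TK}\ref{TKii}.

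Alternatively, we can avoid the factorization argument and check \ref{TK4} directly by exhibiting a point. Take $z_i:=1$ for $i<r$ and $z_r:=-\ga_r\qw\sum_{i<r}\ga_i$. Then $\sumir\ga_iz_i=0$. Every $z_i$ is nonzero, because $r\ge2$ and all $\ga_i>0$. So $\prodir z_i^{m_i}\neq0$, and \ref{TK4} fails.

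There is no real obstacle here. The only point needing care is where the hypothesis $r\ge2$ enters: it guarantees that the hyperplane $\sumir\ga_iz_i=0$ contains a point with all coordinates nonzero. This contrasts with \refC{Cr=1}, where for $r=1$ and $\ga_1=1$ the value $m$ is indeed bad.
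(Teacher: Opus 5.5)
Your proposal is correct and is essentially the paper's proof: the paper computes $\emmr_m(\zzr)=\prodir z_i^{m_i}$ and notes that, since $r\ge2$, one can choose all $z_i\neq0$ with $\sumir\ga_iz_i=0$, so \refTK{TK4} fails. Your divisibility argument via \refTK{TK5} in $\bbR[z_1,\dots,z_r]$ is an equally valid rephrasing of the same observation.
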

\begin{proof}
  We have $e_m(\zzm)=z_1\dotsm z_m$, and thus
  \begin{align}
    \emmr_m(\zzr)=\prodir z_i^{m_i}.
  \end{align}
We may choose $z_i\neq0$ with $\sumir \ga_i z_i=0$, and thus 
\refTK{TK4} does not hold.
\end{proof}

More generally we have the following bound.

\begin{remark}\label{Rperm}
$\cM(\mmr;\gaar)=\cM(m_{\gs(1)},\dots,m_{\gs(r)};\ga_{\gs(1)},\dots,\ga_{\gs(r)})$
for any permutation $\gs\in\fS_r$.
Hence, we may by simultaneous permutations of $\mmr$ and $\gaar$ without
loss of generality assume, for example, $m_1\ge\dots\ge m_r$.
The results below do not assume this, but the formulations
are sometimes for convenience optimized for this case.
\end{remark}

\begin{corollary}\label{C6}
  Suppose $r\ge2$. If $k$ is bad, then $k<m_{r-1}+m_r$.
\end{corollary}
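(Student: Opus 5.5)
We may assume $\sumir\ga_i=1$, since otherwise \refT{TK}\ref{TKi} shows that nothing is bad. By \refTK{TK4}, it then suffices, for each $k\ge m_{r-1}+m_r$, to exhibit some $(\zzr)\in\bbC^r$ with $\sumir\ga_iz_i=0$ and $\emmr_k(\zzr)\neq0$. The idea is the one already used in the proof of \refC{Cm}: make the unwanted terms of \eqref{ele33} vanish by setting suitable coordinates to zero. Roughly speaking, variables set to $0$ contribute nothing, while the remaining free variables must absorb all of the degree $k$.

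Fix $k$ with $m_{r-1}+m_r\le k\le m$; the case $k=m$ is \refC{Cm}, but the argument below covers it too. We keep $z_{r-1}$ and $z_r$ nonzero, tied together by $\ga_{r-1}z_{r-1}+\ga_rz_r=0$, for instance $z_{r-1}=\ga_r$ and $z_r=-\ga_{r-1}$. The remaining degree $k':=k-m_{r-1}-m_r\ge0$ has to be produced by $z_1,\dots,z_{r-2}$ while keeping $\sumir\ga_iz_i=0$. To do this, we choose a set $J\subseteq[r-2]$ together with values $z_i$ for $i\in J$, and put $z_i=0$ for $i\notin J\cup\set{r-1,r}$.

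The subtle point is that $\emmr_k$ is a sum over all distributions $(\ell_1,\dots,\ell_r)$, and several of them survive. A cleaner choice avoids this: put $z_i=t$ for all $i\le r-2$, where $t$ is a free parameter, and choose $z_{r-1},z_r$ depending on $t$ so that the linear constraint holds. Then $\emmr_k$ becomes a polynomial in $t$ (and in the two-dimensional freedom of $(z_{r-1},z_r)$), and we want to show it is not identically zero. Better still is to treat this as a polynomial identity. Set $s:=\ga_1+\dots+\ga_{r-2}$ and $z_i=u$ for $i\le r-2$. The constraint reads $su+\ga_{r-1}z_{r-1}+\ga_rz_r=0$, and
\begin{align*}
\emmr_k = \sum_{a+b+c=k}\binom{m'}{a}\binom{m_{r-1}}{b}\binom{m_r}{c}u^az_{r-1}^bz_r^c,
\end{align*}
where $m'=m-m_{r-1}-m_r$. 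Now look at the top part in $(z_{r-1},z_r)$, namely $b=m_{r-1}$ and $c=m_r$. Its coefficient is $\binom{m'}{k'}u^{k'}z_{r-1}^{m_{r-1}}z_r^{m_r}$, which is nonzero because $0\le k'\le m'$.

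The argument then runs as follows. Treat $u$ as the dependent variable, $u=-(\ga_{r-1}z_{r-1}+\ga_rz_r)/s$ (possible since $r\ge3$ gives $s>0$; for $r=2$ we have $k'=0$ and we just use $z_1=\ga_2$, $z_2=-\ga_1$ as in \refC{Cm}). Substituting yields a homogeneous polynomial of degree $k$ in $(z_{r-1},z_r)$. Consider the monomial $z_{r-1}^{m_{r-1}+k'}z_r^{m_r}$. Every term of the sum has $b\le m_{r-1}$, and the substitution for $u$ adds only mixed contributions, so this monomial can arise only from terms with $c\le m_r$. Its coefficient works out to a sum of products of binomial coefficients with signs $(-\ga_{r-1}/s)^{a}(-\ga_r/s)^{\cdot}$, so sign cancellation is the main obstacle.

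To avoid that, I would instead pick the extreme monomial $z_{r-1}^{m_{r-1}+k'}z_r^{m_r}$. The $z_r$-degree $m_r$ is maximal, which forces $c=m_r$ and forces $u^a$ to contribute only $z_{r-1}$. The $z_{r-1}$-degree then equals $b+a=m_{r-1}+k'$ with $b\le m_{r-1}$ and $a\le m'$. Several pairs $(a,b)$ still remain, so a different extreme monomial is needed. The right one should be the monomial with maximal $z_{r-1}$-power beyond $m_{r-1}$, handled by a lexicographic argument. If this fails, I would fall back on choosing $z_i=0$ except on a carefully chosen subset $J$ with $\sum_{i\in J}m_i\ge k'$, and verifying nonvanishing at a generic point.
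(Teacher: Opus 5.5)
Your proposal has a genuine gap: it never proves that $e_k^{(m_1,\dots,m_r)}$ is nonzero on the hyperplane $\sum_i\alpha_iz_i=0$. You observe that the term with $b=m_{r-1}$, $c=m_r$ has the nonzero coefficient $\binom{m'}{k'}u^{k'}z_{r-1}^{m_{r-1}}z_r^{m_r}$. That only shows the polynomial is nonzero in the free variables $(u,z_{r-1},z_r)$, which is automatic. What matters is whether it survives the restriction to $su+\alpha_{r-1}z_{r-1}+\alpha_rz_r=0$, i.e.\ the divisibility question in \refT{TK}.

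When you then eliminate $u$, the extreme-monomial step fails. The claim that $z_r$-degree $m_r$ is maximal and forces $c=m_r$ is false. The factor $u^a=\bigl(-(\alpha_{r-1}z_{r-1}+\alpha_rz_r)/s\bigr)^a$ also produces powers of $z_r$, so a term can have $z_r$-degree up to $a+c$. The coefficient of $z_{r-1}^{m_{r-1}+k'}z_r^{m_r}$ therefore collects contributions from many triples $(a,b,c)$ with signs $(-1)^a$. This is exactly the cancellation you feared, and your last paragraph leaves the argument open.

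The missing idea is to eliminate the right variable. Solve the constraint for $z_r$, as in \eqref{ste1}, and look at the top degree in $z_{r-1}$. A term of \eqref{ele33} with exponents $\ell_{r-1},\ell_r$ has $z_{r-1}$-degree at most $\ell_{r-1}+\ell_r\le m_{r-1}+m_r$. Equality holds only if $\ell_{r-1}=m_{r-1}$, $\ell_r=m_r$, and the summand $\alpha_{r-1}z_{r-1}$ is taken from each of the $m_r$ substituted factors. So only one pair $(\ell_{r-1},\ell_r)$ contributes, and no cancellation can occur. The coefficient of $z_{r-1}^{m_{r-1}+m_r}$ is exactly $(-\alpha_{r-1}/\alpha_r)^{m_r}\,e^{(m_1,\dots,m_{r-2})}_{k'}(z_1,\dots,z_{r-2})$. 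This is a nonzero polynomial because $0\le k'\le m-m_{r-1}-m_r$. Hence \eqref{ste1} fails and $k$ is not bad; this is the paper's proof.

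The same works in your reduced variables. Substitute $z_r=-(su+\alpha_{r-1}z_{r-1})/\alpha_r$ instead of solving for $u$. The coefficient of $z_{r-1}^{m_{r-1}+m_r}$ is then $\binom{m'}{k'}(-\alpha_{r-1}/\alpha_r)^{m_r}u^{k'}\neq0$.
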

\begin{proof}
  Suppose $k\ge m_{r-1}+m_r$.
If we expand the polynomial in \eqref{ste1} using \eqref{ele33}, then the
terms with highest degree in $z_{r-1}$ are obtained by taking
$\ell_{r-1}=m_{r-1}$ and $\ell_r=m_r$, and ignoring $z_i$ with $i<r-1$ in
$\sumiri\ga_iz_i$; hence these terms are, with $k':=k-m_{r-1}-m_r\ge0$,
\begin{align}
  \sum_{\ell_1+\dots+\ell_{r-2}=k'}
\prod_{i=1}^{r-2}\binom{m_i}{\ell_i} z_i^{\ell_i}\cdot 
\parfrac{-\ga_{r-1}}{\ga_r}^{m_r}z_{r-1}^{m_{r-1}+m_r}.
\end{align}
This gives a nonempty set of monomials with nonzero coefficients, and thus
\eqref{ste1} cannot hold.
\end{proof}

\begin{corollary}\label{C11}
  If $r\ge2$ and $m_{r-1}=m_r=1$, then 
\begin{align}\label{c11}
  \cK=
  \begin{cases}
    \set1, 
\text{ and thus }\cM=\Ls_1(\oi^m),
& \text{if\/ }\ga_i=m_i/m \quad\forall i\in[r],\\
\emptyset, \text{ and thus }\cM=\set0,& \text{otherwise}.
  \end{cases}
\end{align}
\end{corollary}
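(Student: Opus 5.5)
By \refC{C6}, a bad $k$ must satisfy $k<m_{r-1}+m_r=2$. Hence $\cK\subseteq\set1$. (When $\sumir\ga_i<1$ we already have $\cK=\emptyset$ by \refT{TK}\ref{TKi}, and \eqref{c1} cannot hold in that case, since $\sum_i m_i/m=1$. So there is nothing more to prove there.)

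It remains to decide whether $1\in\cK$, and \refC{C1} answers this exactly: $1$ is bad if and only if $\ga_i=m_i/m$ for every $i\in[r]$. This gives $\cK=\set1$ in that case and $\cK=\emptyset$ otherwise.

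The identification of $\cM$ then follows from \eqref{cm}. When $\cK=\set1$ we get $\cM=\Ls_1(\oi^m)$, and when $\cK=\emptyset$ we get $\cM=\set0$.

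All the work is in the cited corollaries, so I expect no real obstacle. The one point to check is that \refC{C6} applies: it needs $r\ge2$, which we have, and it is stated with the last two indices $r-1$ and $r$. That matches our hypothesis $m_{r-1}=m_r=1$ directly. If the two parts equal to $1$ sat elsewhere, we would first reorder using \refR{Rperm}.
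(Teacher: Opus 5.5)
Your argument is correct and is essentially the paper's proof: \refC{C6} restricts any bad $k$ to $k<2$, and \refC{C1} decides exactly when $1$ is bad. The one step you leave implicit is that $0$ is never bad, which is \refL{L0}. You need it to pass from $k<2$ to $\cK\subseteq\set1$, and the paper cites it explicitly.
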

\begin{proof}
  By \refL{L0} and \refC{C6}, if $k$ is bad, then $k=1$.
The result thus follows from \refC{C1}.
\end{proof}

\begin{corollary}\label{C31}
  If $r\ge3$ and $m_r=1$, then 
\begin{align}\label{c31}
  \cK=
  \begin{cases}
    \set1, 
\text{ and thus }\cM=\Ls_1(\oi^m),
& \text{if\/ }\ga_i=m_i/m \quad\forall i\in[r],\\
\emptyset, \text{ and thus }\cM=\set0,& \text{otherwise}.
  \end{cases}
\end{align}
\end{corollary}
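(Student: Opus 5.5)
By \refL{L0} and \refC{C1} it suffices to show that no $k$ with $2\le k\le m$ is bad. By \refT{TK}\ref{TKi} we may assume $\sumir\ga_i=1$. Fix such a $k$ and suppose that $k$ is bad; I aim for a contradiction via \refTK{TK6}. Write $z':=(z_1,\dots,z_{r-1})$, let $E_j(z'):=e^{(m_1,\dots,m_{r-1})}_j(z')$, and set $L(z'):=\sum_{i=1}^{r-1}\ga_iz_i$. Since $m_r=1$, \eqref{ele33} gives
\begin{align*}
\emmr_k(z',z_r)=E_k(z')+z_rE_{k-1}(z').
\end{align*}
Substituting $z_r=-L(z')/\ga_r$, as in \eqref{ste1}, the condition that $k$ is bad becomes the polynomial identity
\begin{align*}
\ga_rE_k(z')=L(z')\,E_{k-1}(z')
\qquad\text{in }\bbR[z_1,\dots,z_{r-1}].
\end{align*}
Note that $r\ge3$ means that at least two variables $z_1,z_2$ remain on the left of the substitution.

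The plan is to compare coefficients of a few monomials in this identity. First take the pure powers $z_i^k$ with $i<r$. These give $\ga_r\binom{m_i}{k}=\ga_i\binom{m_i}{k-1}$, that is, $\ga_rm_i-\ga_r(k-1)=\ga_ik$, whenever $m_i\ge k-1$. If $m_i<k-1$, the two sides vanish trivially and I must use other monomials instead. In the main case, where all $m_i\ge k$ for $i<r$, this determines every $\ga_i$ in terms of $\ga_r$, namely $\ga_i=\ga_r(m_i-k+1)/k$.

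Next compare the coefficients of a mixed monomial $z_1^{k-1}z_2$, which is available because $r\ge3$. The left side gives $\ga_r\binom{m_1}{k-1}m_2$. The right side gives $\ga_1\binom{m_1}{k-2}m_2+\ga_2\binom{m_1}{k-1}$. Inserting the values of $\ga_1$ and $\ga_2$ from the first step and using $\binom{m_1}{k-1}=\binom{m_1}{k-2}\frac{m_1-k+2}{k-1}$, everything becomes an explicit expression in $m_1,m_2,k$ times $\ga_r\ne0$. I expect this expression to be nonzero exactly when $k\ge2$, which yields the contradiction. As a consistency check, for $k=1$ the relations should reduce to $\ga_i\propto m_i$, recovering \refC{C1}.

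The main obstacle is this bookkeeping. One part is verifying that the resulting identity in $m_1,m_2,k$ is genuinely impossible for $k\ge2$, possibly by also using $z_1z_2^{k-1}$ or the normalisation $\sumir\ga_i=1$. The other part is the degenerate cases where some $m_i<k$, so that the pure-power coefficients carry no information. For the degenerate cases I would first try to use \refR{Rperm} to pick $i=1,2$ with the largest $m_i$. Alternatively, I would choose a monomial $z_1^{a}z_2^{b}$ with $a\le m_1$, $b\le m_2$, $a+b=k$ and $a=m_1$ maximal, in the spirit of the leading-term argument of \refC{C6}. The top-degree term in $z_1$ then appears only on one side, or appears with incompatible coefficients. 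Once every $k\ge2$ is excluded, $\cK\subseteq\set1$, and \refC{C1} gives \eqref{c31}.
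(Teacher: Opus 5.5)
Your proposal is correct and is essentially the paper's proof. The paper also compares the coefficients of $z_1^k$, $z_2^k$ and $z_1^{k-1}z_2$ in \eqref{ste1}, and your planned substitution works: after clearing denominators, the two sides of the resulting relation differ by $(k-1)(m_1+m_2-k+2)\neq0$ for $k\ge2$. The degenerate case needs nothing new, because with $m_r=1$, \refC{C6} (after a permutation as in \refR{Rperm}) already gives $k\le m_i$ for every $i<r$. That is exactly your leading-term argument, except that the relevant monomial has $z_1$-degree $m_1+1$ and occurs only on the right-hand side $L\,E_{k-1}$.
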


\begin{proof}
  Suppose that $k\in[m]$ is bad.
First, by \refC{C6} (and  permuting $(m_i)_i$ and $(\ga_i)_i$),
we have $k<m_i+1$, and thus $k\le m_i$,  for every $i\le r-1$.
Since $k$ is bad, \refTK{TK6} holds. Let $0\le \ell\le k$, and consider the
terms $z_1^{k-\ell}z_2^\ell$ in \eqref{ste1}; this yields by \eqref{ele33}
(using $m_r=1$)
\begin{align}\label{c32}
  \binom{m_1}{k-\ell}\binom{m_2}{\ell}
-\frac{\ga_1}{\ga_r}
  \binom{m_1}{k-\ell-1}\binom{m_2}{\ell}
-\frac{\ga_2}{\ga_r}
  \binom{m_1}{k-\ell}\binom{m_2}{\ell-1}
=0,
\end{align}
which is equivalent to
\begin{align}\label{c33}
  \ga_r = \ga_1\frac{k-\ell}{m_1-k+\ell+1}
+ \ga_2\frac{\ell}{m_2-\ell+1}.
\end{align}
In particular, for $\ell=0$ and $\ell=k$ we obtain
\begin{align}\label{c34}
  \ga_r = \ga_1\frac{k}{m_1-k+1}
= \ga_2\frac{k}{m_2-k+1}.
\end{align}
Taking a weighted average yields
\begin{align}\label{c35}
  \ga_r &= \frac{k-1}{k}\ga_1\frac{k}{m_1-k+1}
+\frac{1}{k} \ga_2\frac{k}{m_2-k+1}
\notag\\&
= \ga_1\frac{k-1}{m_1-k+1}
+ \ga_2\frac{1}{m_2-k+1},
\end{align}
while taking $\ell=1$ in \eqref{c33} yields
\begin{align}\label{c36}
  \ga_r = \ga_1\frac{k-1}{m_1-k+2}
+ \ga_2\frac{1}{m_2},
\end{align}
which is strictly smaller than \eqref{c35} if $k\ge2$.
This gives a contradiction unless $k=1$, 
so only 1 can possibly be bad.
The result now follows  from \refC{C1}.
\end{proof}

If $r\ge3$, then the non-homogeneous polynomial
$\emmr_k(z_1,\dots,z_{r-1},1)$ obtained by letting $z_r=1$
is a polynomial of degree $k$ in $r-1\ge2$ variables. The condition \refTK{TK5}
implies that this polynomial has a linear factor. This seems to be a strong
requirement; geometrically it means that the $(r-2)$-dimensional surface in
$\bbR^{r-1}$ defined by the polynomial 
(or the corresponding projective surface defined by the homogeneous
polynomial)
is degenerate and contains a
hyperplane. We conjecture that this does not happen.

\begin{conj}\label{Conj:r>2}
  If $r\ge3$, then no $k\ge2$ is bad.
\end{conj}
\refC{Cr=2} and \refEs{Er=2a} and \ref{Er=2} show that this does not hold
for $r=2$.
Note also that 1 may be bad for any $r$, see \refC{C1}.

\section{Are the bad cases really bad?}\label{Sbad}
We continue to assume  \refAss{AAA}.

In \refS{Sgood}, we found many cases where $\tcP_\mmr(F;\gaar)$ is a \qrp,
but also some exceptional cases where the results do not apply.
It is important to realize that these exceptional cases might be an artefact
of our method of proof. In the exceptional cases, the space $\cM$ defined by
\eqref{t1} is nonzero, but this does not necessarily
mean that there exists a non-zero function of the special
form $\tpsifw(\xxm)-p^{e(F)}$ in $\cM$.
Hence it is still possible that 
\eqref{tkx} holds, which by \refT{TKX} shows that
$\tcP_\mmr(F;\gaar)$ is a \qrp.

In fact, the only known counterexamples, when the property
$\tcP_\mmr(F;\gaar)$ is known \emph{not} to be a \qrp,
are the two classical cases noted already in \cite{ChungGW:quasi}
together with a minor extension (mentioned in \cite{SJ294}):
\begin{PXenumerate}{X}
\item\label{cex1} 
$r=1$ and $\ga=1$, and any $F$. (See \refR{Rr=1}.)
\item\label{cex2} 
$r=m=2$, $F=\sK_2$, and $\ga_1=\ga_2=\frac12$. (See \refE{ERK2}.)
\item\label{cex3} 
Some further
cases with $e(F)=1$, so $F=\sK_2$ plus some isolated vertices.
(See \refEs{EK2+}--\ref{EK2+c}.)
\end{PXenumerate}

The most important case when $\cM\neq\set0$ is when
$1$ is the only bad integer; in other words, $\cK=\set1$ and 
thus, by \eqref{cm}, $\cM=\Ls_1(\oi^m)$. We have seen 
in \refC{Cr=m}
that this is the only exceptional case when $r=m$
(and then it occurs when $\ga_i=\frac1m$ for every $i$, see also
\cite[Theorem 2.11]{SJ294}); this case also appears in \refCs{C11} and
\ref{C31}, and in \refConj{Conj:r>2}. 
In this case, we have the following,
which is a simple extension of \cite[Lemma 4.9(b)]{SJ294}:
\begin{lemma}[essentially \cite{SJ294}]\label{LST}
  Suppose that $\cK=\set1$. 
Then $\tcP_\mmr(F;\gaar)$ holds if and only if
there exists a function $h$ on $\oi$ with $\intoi h=p^{e(F)}/m$ such that
\begin{align}\label{lst}
  \tpsifw(\xxm)=\sumim h(x_i)
\quad\text{a.e.\ on $\oi^m$}.
\end{align}

Consequently, 
$\tcP_\mmr(F;\gaar)$ is a \qrp{} if and only \eqref{lst} implies 
that $W$ is constant a.e.
\end{lemma}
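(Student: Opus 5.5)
By \refT{TKX} (in the form \eqref{mac1} combined with \eqref{cm}), the property $\tcP_\mmr(F;\gaar)$ holds for a graphon $W$ if and only if $\tpsifw-p^{e(F)}\in\Ls_1(\oi^m)$, since $\cK=\set1$. The plan is to identify $\Ls_1(\oi^m)$ explicitly and translate this membership into \eqref{lst}.

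The key step is the description of $\Ls_1(\oi^m)$. By \refL{LPhi}, $\Phi_{m,1}$ is an isomorphism of $\Ls_1(\oi)=L^2_1(\oi)$ onto $\Ls_1(\oi^m)$. By \eqref{jun5}, $L^2_1(\oi)$ consists of the functions with integral $0$. Hence
\begin{align*}
  \Ls_1(\oi^m)=\Bigset{\textstyle\sumim g(x_i): g\in L^2(\oi),\ \intoi g=0}.
\end{align*}

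For the forward direction, suppose $\tcP_\mmr(F;\gaar)$ holds. Then $\tpsifw-p^{e(F)}=\sumim g(x_i)$ for some $g$ with $\intoi g=0$. Put $h:=g+p^{e(F)}/m$. Then $\intoi h=p^{e(F)}/m$ and \eqref{lst} holds.

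For the converse, suppose $h$ satisfies \eqref{lst} and $\intoi h=p^{e(F)}/m$. First, $h\in L^2(\oi)$, which is the only point needing care. To see this, integrate \eqref{lst} over $x_2,\dots,x_m$. This gives $h(x_1)=\int\tpsifw\dd x_2\dotsm\dd x_m-(m-1)p^{e(F)}/m$, which is bounded because $0\le\tpsifw\le1$. (The integral of $h$ is finite, since it is given as $p^{e(F)}/m$.) Then set $g:=h-p^{e(F)}/m$. We have $\intoi g=0$ and $\tpsifw-p^{e(F)}=\sumim g(x_i)\in\Ls_1(\oi^m)=\cM$, so \eqref{mac1} holds, which is exactly $\tcP_\mmr(F;\gaar)$ for $W$.

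For the final sentence, combine this equivalence with \refT{TKX} in the form \eqref{tky}. The property is \qr{} if and only if every $W$ satisfying \eqref{lst} with such an $h$ is a.e.\ constant. The integral condition on $h$ can be dropped. If $W$ satisfies \eqref{lst} for some integrable $h$, then $W$ satisfies the property for the value $p'$ defined by $p'^{e(F)}=m\intoi h$. This $p'$ lies in $[0,1]$ because $\int\tpsifw\in\oi$. Since the results hold for every $p$, and for $p'\in\set{0,1}$ constancy is immediate, the condition on $\intoi h$ is immaterial. I expect no real obstacle here, only the bookkeeping of this normalisation and the $L^2$ check above.
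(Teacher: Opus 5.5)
Your proof of the main equivalence is correct and follows the paper's argument, which is just \eqref{mac1} and \eqref{cm} together with the explicit form of $\Ls_1(\oi^m)$. You derive that form from \refL{LPhi} and \eqref{jun5} rather than directly from the definition. Your Fubini check that $h$ is bounded, hence in $L^2$, is a detail the paper leaves implicit. Your sentence ``the property is \qr{} if and only if every $W$ satisfying \eqref{lst} with such an $h$ is a.e.\ constant'' already proves the final statement. That statement is meant with the normalization $\intoi h=p^{e(F)}/m$ for the fixed $p$, which is how it is used in \refT{TB}.

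The extra claim that this normalization can be dropped is false, and you should delete it. The case $p'=0$ is not ``immediate''. Take $F=\sK_3$, $r=m=3$ and $\ga_i=\frac13$. Then $\cK=\set1$ by \refC{Cr=m}, and the property is \qr{} by \refT{TB}. The graphon $W(x,y):=\indic{(x-\frac12)(y-\frac12)<0}$ is not constant. Yet $\tPsi_{\sK_3,W}=0$ a.e., since two of any three points lie on the same side of $\frac12$. So \eqref{lst} holds with $h=0$.

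The appeal to ``the results hold for every $p$'' also does not justify the transfer. It is not a theorem that quasi-randomness of $\tcPm(F;\gaar)$ is independent of $p$. The scaling $W\mapsto cW$, $h\mapsto c^{e(F)}h$ with $0<c\le1$ only moves counterexamples to smaller values of $p$. Hence quasi-randomness at $p$ says nothing at any $p'<p$.
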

\begin{proof}
  This is a simple consequence of \eqref{mac1}, \eqref{cm}, and
the definition of $\Ls_1(\oi^m)$.
\end{proof}

In particular, in the case $r=m$ (so all $m_i=1$), \refC{Cr=m} shows, as
said above, that the only bad case is $\ga_1=\dots=\ga_m=\frac1m$, and then
$\cK=\set1$, so \refL{LST} applies.
This case was studied in \cite{SJ294}, and it was proved 
in \cite[Theorem 2.12]{SJ294} that then
$\tcP_{1,\dots,1}(F;\frac1m,\dots,\frac1m)$ is a \qrp{} 
at least if $F$ is either a regular graph, a star, or disconnected.
We can extend this as follows, for the same graphs but more general
$\mmr$ and $\gaar$ (for examples, see \refCs{C11} and \ref{C31}, and
\refConj{Conj:r>2}). 

\begin{theorem}\label{TB}
  Suppose that
$\cK=\set1$, and that
$F$ is either a regular graph, a star, or disconnected.
Then $\tcPm(F;\gaar)$ is a \qrp.
\end{theorem}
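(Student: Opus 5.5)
By \refL{LST}, it suffices to show that if $W$ is a graphon with
\begin{align*}
\tpsifw(\xxm)=\sumim h(x_i)\quad\text{a.e.}
\end{align*}
for some $h$ with $\intoi h=p^{e(F)}/m$, then $W$ is constant a.e. This reduction uses only $\cK=\set1$, so the values of $\mmr$ and $\gaar$ no longer enter. The implication ``\eqref{lst} $\implies$ $W$ constant'' is exactly what is proved in \cite[Theorem 2.12]{SJ294} for $F$ regular, a star, or disconnected. There it arises from the case $r=m$, $\ga_i=1/m$, through the same reduction \cite[Lemma 4.9(b)]{SJ294}. So my plan is to invoke that argument verbatim, and check that it uses nothing about the sets $A_i$ beyond \eqref{lst}.

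To make the plan self-contained, I would sketch the three cases as follows. For $F$ disconnected, write $F=F_1\cup F_2$ with vertex sets $V_1,V_2$, and integrate \eqref{lst} over all variables except one or two. Expanding the symmetrization, the product structure $\psifw=\Psi_{F_1,W}\Psi_{F_2,W}$ forces the degree (marginal) function $d_W(x)=\intoi W(x,y)\dd y$ to appear in a relation. Comparing the second-order parts, i.e. projecting onto $\Ls_2$, which must vanish by \eqref{lst}, gives $\int(d_W-\bar d)^2=0$ or a similar identity. Hence $W$ is regular and then constant by further projections.

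For $F$ regular of degree $d$, I would use a moment or homogeneity argument. Consider $\tpsifw$ evaluated at the diagonal-like points, or integrate \eqref{lst} against suitable test functions. The key is that the component of $\tpsifw$ in $\Ls_2(\oi^m)$ must vanish, and for regular $F$ this component can be computed in terms of $W-\int W$. Vanishing then yields $\norm{W-p}_2=0$, possibly via positivity of a quadratic form such as $t(C_4)$-type quantities. For stars $K_{1,m-1}$, $\psifw(x)=\prod_{j\ge2}W(x_1,x_j)$. Setting all but two variables to integrate out gives explicit expressions in $W$ and $d_W$, and the vanishing of the $\Ls_2$ part should give $\intoi d_W(x)^{m-2}W(x,y)W(x,z)\dd x=$ a function of $y$ plus a function of $z$. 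A Cauchy--Schwarz argument then forces constancy.

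The main obstacle is the regular case, where one must extract from ``$\tpsifw$ has no component in $\bigoplus_{k\ge2}\Ls_k$'' a genuinely positive quantity forcing $W=p$. I would first try to compute the projection onto $\Ls_2(\oi^m)$ explicitly as a sum over edges and pairs of vertices. For regular $F$, the edge terms contribute $W(x,y)$ minus marginals with a common nonzero coefficient; one then has to show that the remaining terms, coming from nonadjacent pairs via two-step paths, cannot cancel it. If this gets messy, I would simply defer to \cite[Theorem 2.12]{SJ294}, which is legitimate since the reduced condition is identical.
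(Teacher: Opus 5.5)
Your main argument is the paper's proof. \refL{LST} applied with $r=m$, $\ga_i=1/m$ (where $\cK=\set1$ by \refC{Cr=m}) turns \cite[Theorem 2.12]{SJ294} into the implication ``\eqref{lst} $\implies$ $W$ constant''. A second application of \refL{LST} for the given $\mmr,\gaar$ then finishes. Since \refL{LST} is an equivalence, you do not need to re-read the proof in \cite{SJ294} or check how it uses the sets $A_i$. The statement of that theorem can be used as a black box.

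However, you (like the statement as printed) overlook that this requires $e(F)>1$. The paper presents \cite[Theorem 2.12]{SJ294} as special cases of the conjecture for graphs with $e(F)>1$, and for $e(F)=1$ the conclusion is false. Take $F=\sK_2$, which is regular and a star, with $r=m=2$ and $\ga_1=\ga_2=\frac12$, so $\cK=\set1$. Then $W(x,y)=h(x)+h(y)$, with $h$ non-constant, $0\le h\le\frac12$ and $\intoi h=p/2$, satisfies \eqref{lst}; this is \refE{ERK2}. Similarly, $\sK_2$ plus isolated vertices is disconnected and fails by \refE{EK2+c}. So your assertion that the implication is proved in \cite{SJ294} ``for $F$ regular, a star, or disconnected'' needs the qualifier $e(F)>1$.

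Your sketches also cannot be right as written, since they never use $e(F)>1$. For $F=\sK_2\cup\sK_1$, every $W(x,y)=g(x)+g(y)$ satisfies \eqref{lst} with $h=\frac23 g$. Hence no projection argument can force $W$ to be regular and then constant in the disconnected case. For $F=\sK_2$, the $\Ls_2$-component of $\tpsifw$ is $W(x,y)-d_W(x)-d_W(y)+\int W$, and its vanishing does not give $\norm{W-p}_2=0$. With $e(F)>1$ added to the hypotheses, your deferral to \cite{SJ294} is exactly the paper's argument and is correct. The heuristic sketches should be dropped; they are not proofs in any case.
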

\begin{proof}
By the special case in \cite[Theorem 2.12]{SJ294} just mentioned and
\refL{LST},  for these graphs $F$,
\eqref{lst} implies that $W$ is constant a.e.
Consequently, the result follows by another application of \refL{LST}.
(This is an example of the advantage of splitting the problem of
quasi-randomness into two, as discussed in \refR{Rsplit}.)
\end{proof}

It was conjectured in
\cite[Conjecture 2.13]{SJ294} that 
$\tcP_{1,\dots,1}(F;\frac1m,\dots,\frac1m)$ is a \qrp{}
for every graph $F$ with $e(K)>1$.
The proof of \refT{TB} shows that this is equivalent to the more general:
\begin{conj}\label{CB}
Suppose that
$\cK=\set1$, and that
$e(F)>1$,
Then $\tcPm(F;\gaar)$ is a \qrp.
\end{conj}
See also the corresponding results and conjectures with the non-symmetric
$\cP$ instead of $\tcP$ in 
\cite[Theorem 2.12 and Conjecture 2.13]{SJ294}
and \cite[Theorem 3.6 and Conjecture 6.1]{Hatami2Li}.

If there exists a counterexample to \refConj{CB}, 
then \refL{LST} shows that 
there exists a non-constant graphon $W$ such that \eqref{lst} holds.
It is shown in \cite[Theorem 5.3]{SJ294}, using a removal lemma by
\citet{Petrov}, that in this case,
there exists such a counterexample where $W$
is a 2-type graphon, i.e., a graphon that can be defined on a two-point
probability space.
This leads to an algebraic condition
\cite[Lemma 6.3]{SJ294}, which using \refL{LST} can be formulated as follows.
We let $e_F(A)$ denote the number of edges of $F$ in $A$, and
$e_F(A,A\comp)$ the number of edges between $A$ and $A\comp$.

\begin{lemma}[{Mainly \cite[Lemma 6.3]{SJ294}}]\label{Lalg}
  Suppose that $\cK=\set1$. 
Then the following are equivalent.
\begin{romenumerate}[-10pt]
\item \label{lalg0}
$\tcPm(F;\gaar)$ is \emph{not} \qr.

\item \label{lalguvs}
There exist numbers $u,v,s\ge0$, not all equal, 
and some reals $a$ and $b$, not both $0$, such that
\begin{equation}\label{alg}
  \sum_{A\subseteq V(F):|A|=k} u^{e_F(A)}v^{e_F(A\comp)}s^{e_F(A,A\comp)}
=\binom mk \xpar{a+bk},
\qquad k=0,\dots,m.
\end{equation}
\end{romenumerate}
\end{lemma}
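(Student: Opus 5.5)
By \refL{LST}, under $\cK=\set1$ the statement \ref{lalg0} is equivalent to the existence of a non-constant graphon $W$ and a function $h$ on $\oi$ such that \eqref{lst} holds. The integral condition $\intoi h=p^{e(F)}/m$ can be dropped: since $p\in(0,1)$ is only a parameter, it can be re-chosen to match $\int\tpsifw$, or $W$ can be rescaled. The plan is to prove both directions by reduction to $2$-type graphons, in the same way as \cite[Lemma 6.3]{SJ294}, whose proof only concerns \eqref{lst} and not the sets $U_i$ or the choice of $\gaar$. This is exactly the splitting of \refR{Rsplit}.

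\ref{lalg0}$\implies$\ref{lalguvs}. Take a counterexample $W$. By \cite[Theorem 5.3]{SJ294} (Petrov's removal lemma), we may assume $W$ is a $2$-type graphon. Concretely, there is a partition $\oi=B\cup B\comp$ with $|B|=q\in(0,1)$, and $W=u$ on $B^2$, $W=v$ on $(B\comp)^2$, $W=s$ on $B\times B\comp$ and $B\comp\times B$. Here $u,v,s\in\oi$ are not all equal, since $W$ is non-constant.

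For $x\in\oi^m$, set $A=\set{i:x_i\in B}$. Then $\psifw(x)=u^{e_F(A)}v^{e_F(A\comp)}s^{e_F(A,A\comp)}$. Symmetrizing over $\fS_m$ shows that $\tpsifw(x)$ depends only on $k=|A|$, and equals
\begin{align}
\binom mk\qw\sum_{|A|=k}u^{e_F(A)}v^{e_F(A\comp)}s^{e_F(A,A\comp)}.
\end{align}
Write $h=h_1$ on $B$ and $h=h_0$ on $B\comp$; this is forced by \eqref{lst}, since $h$ is a.e.\ determined up to constants by fixing all but one coordinate. Then $\sum_i h(x_i)=kh_1+(m-k)h_0$. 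Setting $a=mh_0$ and $b=h_1-h_0$ gives \eqref{alg}.

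The clause ``$a,b$ not both $0$'' follows because the $k=0$ term of the left-hand side is $v^{e(F)}$ and the $k=m$ term is $u^{e(F)}$. If $a=b=0$, then $u=v=0$, and then the $k=1$ term forces $s=0$ (as $e(F)>0$, some vertex has an edge, contributing $s^{\deg}$). This contradicts that $u,v,s$ are not all equal.

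\ref{lalguvs}$\implies$\ref{lalg0}. First normalize so that $u,v,s\le1$. Dividing by $\max(u,v,s)>0$ multiplies the left-hand side of \eqref{alg} by $t^{e(F)}$, which is absorbed into $a,b$; and $\max>0$ holds since they are not all equal. Then build the $2$-type graphon above with any $q\in(0,1)$, and reverse the computation to get \eqref{lst} with $h=(a/m)+b\etta_B$. The graphon $W$ is non-constant because $u,v,s$ are not all equal. Finally, set $p$ by $p^{e(F)}=m\intoi h=\int\tpsifw$. This gives $p\in\oi$, and we need $p\in(0,1)$.

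The main obstacle is this final adjustment: ensuring $p\in(0,1)$ and that $p$ is consistent with the fixed parameter. Since every result is the same for all $p\in(0,1)$, one can rescale $W$ by a factor to hit any prescribed $p$. This requires $\int\tpsifw>0$, which holds unless $W$ vanishes on relevant sets, and that degenerate case can be perturbed away by choosing $q$ appropriately. The remaining steps are routine and track \cite[Lemma 6.3]{SJ294} line by line.
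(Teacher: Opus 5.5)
Your outline (\refL{LST}, then the $2$-type reduction \cite[Theorem 5.3]{SJ294}, then computing $\tpsifw$ on a $2$-type graphon with $a=mh_0$, $b=h_1-h_0$) is the route behind the paper's citation of \cite[Lemma 6.3]{SJ294}. However, there is a genuine gap: the normalization $\intoi h=p^{e(F)}/m$ cannot be dropped, and your derivation of ``$a,b$ not both $0$'' is wrong. For $u=v=0$, the $k=1$ summand for $A=\set{i}$ is $u^0v^{e(F)-\deg(i)}s^{\deg(i)}$, which vanishes unless every edge of $F$ contains $i$. The conclusion itself also fails. For non-bipartite $F$, e.g.\ $F=\sK_3$, one of $A$, $A\comp$ always spans an edge, so $u=v=0$, $s=1$ makes every left-hand side of \eqref{alg} vanish, and \eqref{alg} holds with $a=b=0$. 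Correspondingly, the complete bipartite graphon is non-constant and satisfies \eqref{lst} with $h=0$. By \refT{TB}, $\tcPm(\sK_3;\gaar)$ is \qr{} whenever $\cK=\set1$. So your opening claim, that (i) is equivalent to ``some non-constant $W$ satisfies \eqref{lst} for some $h$'', is false. Neither re-choosing $p$ nor rescaling $W$ helps when $\int\tpsifw=0$, since $p>0$. The repair is to keep the normalization. The $2$-type graphon supplied by \cite[Theorem 5.3]{SJ294} is still a counterexample to $\tcPm(F;\gaar)$, so $a/m+qb=h_0+q(h_1-h_0)=\intoi h=p^{e(F)}/m>0$, whence $(a,b)\neq(0,0)$.

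The converse direction has a second gap. The parameter $p\in(0,1)$ is fixed, and a graphon can only be rescaled downwards. So you cannot invoke ``all results are the same for every $p$''; that is part of what this lemma asserts. From $k=0$ and $k=m$ you get $a=v^{e(F)}$ and $a+bm=u^{e(F)}$, so $m\intoi h=(1-q)v^{e(F)}+qu^{e(F)}$. This is automatically positive once $(a,b)\neq(0,0)$, so no perturbation is needed. But after dividing by $\max(u,v,s)$, it would stay below $p^{e(F)}$ for $p$ near $1$ if $s$ were the strict maximum. You need $s\le\max(u,v)$. This follows from the case $k=m-1$ of \eqref{alg}, namely $\sum_i u^{e(F)-d_i}s^{d_i}=v^{e(F)}+(m-1)u^{e(F)}$, with $d_i$ the degrees in $F$. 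If $u\ge v$ (so $u>0$, since $(a,b)\neq(0,0)$ forces $\max(u,v)>0$) and $s>u$, then the left side exceeds $mu^{e(F)}$, which is at least the right side. The case $v\ge u$ is symmetric, using $k=1$. Then normalize so that $\max(u,v)=1$. Take $q$ close to the end where the value $1$ is attained, so that $m\intoi h>p^{e(F)}$. Multiply $W$ by $t:=p\,(m\intoi h)^{-1/e(F)}<1$. The result is a non-constant graphon satisfying \eqref{lst} with $\intoi t^{e(F)}h=p^{e(F)}/m$, hence a counterexample by \refL{LST}.
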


\refConj{CB} is thus equivalent to the conjecture that
\eqref{alg} cannot happen when $e(F)>1$.
This is still an open problem.
\begin{problem}
  Show that if $e(F)>1$, then \eqref{alg} cannot happen, with $u,v,s$ not
  all equal and $a$   and $b$ not both 0.
\end{problem}

We have here discussed the case $\cK=\set1$. As seen in \refC{C11}, there
are also other possibilities of a bad set $\cK$.
It seems natural to guess that also in these cases, if there is a
counterexample, then there is one with a 2-type graphon.
(This would then, for each $\cK$, 
lead to an algebraic condition similar to \eqref{alg}, but
more complicated.)
Unfortunately, it seems difficult to modify the proof 
in \cite[Theorem 5.3]{SJ294} for the case $\cK=\set1$, 
so we leave this as a conjecture.

\begin{conj}
For any $\cK$,
if
$\tcPm(F;\gaar)$ is {not} \qr, then there is a counterexample with a 2-type
graphon. 
\end{conj}

However, as said above, no such counterexamples are known except the ones in
\ref{cex1}--\ref{cex3} above, so we may be more bold. 
\begin{conj}\label{CC}
Suppose that $r>1$ and
$e(F)>1$,
Then $\tcPm(F;\gaar)$ is a \qrp.
\end{conj}

As a concrete example, we have the following, still open, problem from
\cite[Problem 9.5]{SJ294}. In this case we have $\cK=\set2$ by \refE{Er=2a}.

\begin{problem}
  Is $\tcP_{2,1}(\sK_3;\frac13,\frac23)$ a \qrp?
\end{problem}

We end this section with a simple result on changing the test graph $F$ by
adding one or several isolated vertices.
This will provide the counterexamples in \ref{cex3} above.

\begin{theorem}
  \label{T++}
Let $F^*$ be the graph obtained by adding $\ell\ge1$ isolated vertices to
$F$;
thus $m^*:=|F^*|=m+\ell$.
Suppose that
$F^*$, $m^*$, $r^*$, $(m^*_1,\dots,m^*_{r^*})$, and $(\ga^*_1\dots,\ga^*_{r^*})$
also satisfy \refAss{AAA}, 
and let $\cK^*$ be the corresponding set of bad integers.
Then the property
$\tcP_{m^*_1,\dots,m^*_{r^*}}(F^*;\ga^*_1\dots,\ga^*_{r^*})$ is \qr{}
if and only if
\begin{align}\label{t++}
    \tpsifw-p^{e(F)} \in 
\bigoplus_{k\in \cK^*\cap[m]} \Lsmk
\implies
W \text{ is constant a.e.\ on $\oi^2$}.
\end{align}
\end{theorem}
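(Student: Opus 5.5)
The plan is to reduce everything to \refT{TKX} applied to $F^*$, and then to transport the condition from $\oi^{m^*}$ back to $\oi^m$ by means of the map $\Phi_{m^*,m}$ of \eqref{Phi} and \refL{LPhi}.

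\textbf{Step 1: relate the two symmetrized functions.} Label $F^*$ so that $V(F)=[m]\subseteq[m^*]$ and the added vertices are $m+1,\dots,m^*$. Since these vertices are isolated, $\Psi_{F^*,W}(x_1,\dots,x_{m^*})=\psifw(x_1,\dots,x_m)$, and $e(F^*)=e(F)$. Symmetrizing over $\fS_{m^*}$ as in \eqref{symm}, each permutation places the vertices of $F$ at some ordered $m$-tuple of coordinates with underlying set $I\in\tZZ{m^*}{m}$. Grouping the permutations by $I$ and averaging over the orderings within $I$ should give
\begin{align*}
\widetilde\Psi_{F^*,W}(x_1,\dots,x_{m^*})=\binom{m^*}{m}^{-1}\sum_{I\in\ZZ{m^*}{m}}\tpsifw(x_I)
=\binom{m^*}{m}^{-1}\bigl(\Phi_{m^*,m}\tpsifw\bigr)(x_1,\dots,x_{m^*}).
\end{align*}
Since $\Phi_{m^*,m}$ maps the constant $1$ to $\binom{m^*}{m}$, it follows that
\begin{align*}
\widetilde\Psi_{F^*,W}-p^{e(F^*)}=\binom{m^*}{m}^{-1}\Phi_{m^*,m}\bigl(\tpsifw-p^{e(F)}\bigr).
\end{align*}

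\textbf{Step 2: track the components.} By \refL{LPhi}, $\Phi_{m^*,m}$ is an isomorphism of $\Ls(\oi^m)$ onto $\Lsx{\le m}(\oi^{m^*})$. Applying the lemma with each $k\le m$ in place of $m$ shows that it maps $\Lsx{k}(\oi^m)$ onto $\Lsx{k}(\oi^{m^*})$ for every $0\le k\le m$; this holds because $\Phi_{m^*,m}\circ\Phi_{m,k}$ is a constant multiple of $\Phi_{m^*,k}$.

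Write $g:=\tpsifw-p^{e(F)}=\sum_{k=0}^m g_k$ with $g_k\in\Lsx{k}(\oi^m)$, using \eqref{d3}. Then $\Phi_{m^*,m}g=\sum_{k\le m}\Phi_{m^*,m}g_k$ is the orthogonal decomposition of $\Phi_{m^*,m}g$ in $\Lsx{}(\oi^{m^*})$, and its components for $k>m$ vanish. Therefore
\begin{align*}
\Phi_{m^*,m}g\in\bigoplus_{k\in\cK^*}\Lsx{k}(\oi^{m^*})
\iff g\in\bigoplus_{k\in\cK^*\cap\{0,\dots,m\}}\Lsx{k}(\oi^m)
\iff g\in\bigoplus_{k\in\cK^*\cap[m]}\Lsmk,
\end{align*}
where the last step uses $0\notin\cK^*$ by \refL{L0}.

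\textbf{Step 3: conclude.} By \refT{TKX}, in the form \eqref{tky} for $F^*$, the property in question is \qr{} if and only if the left-hand condition in the display of Step 2 implies that $W$ is constant a.e. By Steps 1 and 2 this is exactly \eqref{t++}.

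The main point needing care is Step 2, namely that $\Phi_{m^*,m}$ respects the grading $\Lsx{k}$ and does not merely map $\Ls(\oi^m)$ isomorphically onto $\Lsx{\le m}(\oi^{m^*})$. I would verify this either via the composition identity for $\Phi$ noted above, or directly via Fourier coefficients as in the proof of \refL{LPhi}: the number of nonzero frequencies is preserved, because $\Phi_{m^*,m}$ only adds zero frequencies in the new coordinates.
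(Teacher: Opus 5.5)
Your proposal is correct and follows the paper's proof. Like the paper, you write $\widetilde\Psi_{F^*,W}=\binom{m^*}{m}^{-1}\Phi_{m^*,m}(\tpsifw)$, transfer the membership condition to $\oi^m$ via \refL{LPhi}, and conclude with \refT{TKX}. Your Step~2 spells out something the paper leaves to a single appeal to \refL{LPhi}: the identity $\Phi_{m^*,m}\Phi_{m,k}=\binom{m^*-k}{m-k}\Phi_{m^*,k}$ shows that $\Phi_{m^*,m}$ maps each $\Ls_k(\oi^m)$ onto $\Ls_k(\oi^{m^*})$.
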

\begin{proof}
It follows from \eqref{psifw} that, keeping the labelling of $F$ also in
$F^*$,
\begin{align}
  \Psi_{F^*,W}(x_1,\dots,x_{m^*})
=\Psi_{F,W}(x_1,\dots,x_{m}).
\end{align}
Hence, it follows from \eqref{symm} and \eqref{Phi} that
\begin{align}
  \tPsi_{F^*,W}=\frac{1}{\binom{m^*}m}\Phi_{m^*,m}(\tpsifw).
\end{align}
Consequently, \refL{LPhi} implies that, using $e(F^*)=e(F)$,
\begin{align}
\tPsi_{F^*,W}-p^{e(F^*)}\in\bigoplus_{k\in \cK^*} \Ls_k\bigpar{\oi^{m^*}}
\iff
\tpsifw-p^{e(F)}\in\bigoplus_{k\in \cK^*\cap[m]} \Ls_k\bigpar{\oi^{m}}.
\end{align}
The result thus follows from \refT{TKX}.
\end{proof}

\begin{example}\label{EK2+}
  Let $e(F)=1$, so $F$ equals $\sK_2$ with $m-2$ isolated vertices added.
It follows from \refT{T++}, changing the notation and  noting that
$\tPsi_{\sK_2,W}(x,y) = \Psi_{\sK_2,W}(x,y)=W(x,y)$,
that
$\tcPm(F;\gaar)$ is a \qrp{} if and only if
\begin{align}\label{t++}
    W-p \in 
\bigoplus_{k\in \cK\cap\set{1,2}} \Lsmk
\implies
W=p \text{ a.e.\ on } \oi^2.
\end{align}
This obviously holds if $\cK\cap\set{1,2}=\emptyset$,
and it is otherwise easy to find graphons $W$ that are counterexamples.
Hence, $\tcPm(F;\gaar)$ is a \qrp{} if and only if $\cK\cap\set{1,2}=\emptyset$.
In particular, for any $m\ge2$, $1\le r\le m$ and $(m_1,\dots,m_r)$ with 
$\sum_jm_j=m$, if we choose $\ga_j=m_j/m$, $j\in[r]$,
then $1\in\cK$ by \refC{C1}, and thus $\tcPm(F;\gaar)$ is not a \qrp.
(There may be other choices of $\ga_j$ that make 2 bad, so this is not if
and only if. 
See \refC{Cr=2} and, in the opposite direction, \refConj{Conj:r>2}.) 
\end{example}

\begin{example}\label{EK2+b}
As a special case of \refE{EK2+}, 
we let $F$ be $\sK_2$ plus two isolated vertices, so
$m=4$. For $r=2$ and $m_1=m_2=2$, it follows from \refE{Er=2} that
$\tcP_{2,2}(F;\ga_1,\ga_2)$ is a \qrp{} if and only if
either $\ga_1+\ga_2<1$, or $\ga_1+\ga_2=1$ and
$\ga_1\notin\set{\frac12,\frac12\pm\frac{\sqrt3}6}$.
Note that this yields counterexamples with $\cK=\set2$ and $\cK=\set{1,3}$.
\end{example}

\begin{example}\label{EK2+c}
As another special case of \refE{EK2+}, 
let $m=r\ge2$, so all $m_i=1$, and let $\ga_i=1/m$ for every $i$.
Then $\cK=\set1$ by \refC{Cr=m}, and thus
\refE{EK2+} shows that if $e(F)=1$, then
$\tcP_{1,\dots,1}(F;\frac1m,\dots,\frac1m)$ is not a \qrp.
(As noted in 
\cite{SJ294}, extending the case $m=r=2$ in
\cite{ChungGW:quasi}, see also \refE{ERK2}).
\end{example}

\section{Further comments}\label{Sfurther}

\subsection{Nonsymmetric version}

In the present paper we consider the symmetrized property $\tcPm(\gaar)$,
and analyze it using a decomposition of the space $\Ls(\oi^m)$ of symmetric
functions into irreducible subspaces.
It would be interesting to find similar results for the nonsymmetrical 
$\cPm(\gaar)$,
using a more general result
for arbitrary subspaces of $L^2(\oi^m)$  without assuming symmetry.
However, we believe that a decomposition into irreducible subspaces in general
is considerably more complicated than in the symmetric case, and therefore
it might be less useful even if it can be found explicitly.
On the positive side, one interesting case of such a decomposition 
is given (somewhat implicitly) by \cite[Theorem 3.1]{Hatami2Li}.

\subsection{Induced copies}

The properties $\cP$ and $\tcP$ studied in the present paper concern counts
of not necessarily induced subgraphs.
It is natural to pose the same questions for counts of induced subgraphs;
such problems were considered by \citet{SS:ind} and \citet{ShapiraYuster}.

Such problems too translate to problems for graphons \cite{SJ234}.
However, a basic problem with induced subgraph counts 
is that (except for the uninteresting cases $e(F)=0$ and $F=\sK_m$),
even if we know that a graphon $W$ is constant, the constant is not uniquely
determined by the function $\psifw^*$ corresponding to $\psifw$,
see \cite{SS:ind}, \cite{ShapiraYuster}, \cite{SJ234}.
Nevertheless, we may still ask whether the graphon $W$ has to be a constant.
Some partial results are given in \cite{SS:ind} and \cite{ShapiraYuster}.

\refT{T1} and the decomposition \eqref{cm} still apply,
but even the case $\cK=\emptyset$ seems to remain open in general.
\begin{problem}
  Prove an analogue of \refT{T0} for counts of induced copies of $F$.
\end{problem}

\section{Proof of \refT{T1}}\label{Spf}

The proof of \refT{T1} 
is based on an approximation with step functions
and an analogue of the theorem for the symmetric
group $\fS_N$ acting on a space of step functions (\refT{TR1}).

\subsection{Step functions}\label{SSstep}

For $N\in\bbN$ and $i\in[N]$,
let $I_{N,i}:=[(i-1)/N,i/N)$; thus $(I_{N,i})_{i=1}^N$ is a partition of
$\oio$. 

For an $m$-tuple $\bi=(i_1,\dots,i_m)\in[N]^m$, let
\begin{align}\label{QN}
Q_{N,\bi}:=\prod_{j=1}^m I_{N,i_j}. 
\end{align}
Thus, $\set{Q_{N,\bi}:\bi\in [N]^m}$ is
a partition of $\oio^m$ into $N^m$ subcubes.
We define the projection $P_N$ in $\Lm$ as the conditional expectation with
respect to this partition, i.e.,
\begin{align}
  \label{d4}
P_Nf(\bx) := N^m\int_{Q_{N,\bi}} f(\by)\dd \by, 
\qquad\text{for }\bx\in Q_{N,\bi}.
\end{align}

$P_N$ is an orthogonal projection of $\Lm$ onto the subspace 
of functions constant on each cube $Q_{N,\bi}$.
(Such functions will be called \emph{step functions}.)
This subspace can be identified with $\ell^2(\Nm)$,
the $N^m$-dimensional complex vector space of
complex-valued functions on $\Nm$, where $\Nm$ is equipped with 
normalized times counting measure.
We therefore denote this space of step functions by
$\tell^2(\Nm)$.
Hence, 
\begin{align}\label{pn}
P_N:\Lm\to\tell^2(\Nm)\subset \Lm,
\end{align}
and there is a natural isometry $\tell^2(\Nm)\cong \ell^2(\Nm)$.

\begin{remark}\label{Rtensor}
Readers familiar with the subject will note that
$\ell^2(\Nm)$ may be regarded as the tensor power
$(\bbC^N)^{\otimes m}$; this is implicit in some arguments below.  
Similarly, $\lls(\Nm)$ defined below equals the symmetric tensor power.
\end{remark}

For each permutation $\gs\in \fS_N$, let $\tgs$ be the map
$\oio\to\oio$ that maps each interval $I_{N,i}$ to $I_{N,\gs(i)}$ by a
translation. 
These maps $\tgs$ are measure-preserving bijections of $\oio$
onto itself, and thus they form a subgroup $\tfS_N$ of $\GG$.
Obviously, $\tfS_N\cong \fS_N$, and we may identify $\tfS_N$
and $\fS_N$.
Hence, we may regard $\rho$ in \eqref{rho}
(restricted to $\gf\in\tfS_n$) 
also as a representation of $\fS_N$ on $L^2(\oi^m)$.
(We may extend $\tgs$ to $\oi$ by $\tgs(1):=1$, but this is obviously 
unimportant, and when convenient we may consider $\oio$
only; this makes no difference since 
functions that are a.e.\ equal are identified in $\Lm$,
and thus $L^2(\oio^m)=L^2(\oi^m)$.)

\begin{remark}
We have $\tfS_N\subseteq \tfS_{nN}$ for every $N,n\in\bbN$, 
and 
it follows that 
$\GG_0:=\bigcup_{N=1}^\infty\tfS_N$ is a (countable) subgroup of $\GG$.
The only measure-preserving bijections that we will  use
are of the type $\tgs$, and thus the proof of \refT{T1} shows that $\rho$ is
irreducible 
on each $\Lsmk$ also as a representation of $\GG_0$.
\end{remark}

The group $\fS_N$ acts (by definition) on $[N]$, and this induces an action
on $\Nm$ by $\gs(i_1,\dots,i_m):=(\gs(i_1),\dots,\gs(i_m))$, which in turn
defines a unitary representation of $\fS_N$ 
on $\ell^2(\Nm)$ by the formula \eqref{rho}, now letting
$x_1,\dots,x_m\in [N]$
(and replacing $\gf$ by $\gs$).
(This is a tensor power of the standard representation on $\bbC^N$, see
\refR{Rtensor}.) 
%
Furthermore, 
it is clear that the space of step functions
$\tell^2(\Nm)$ is an invariant subspace of
the representation $\rho$ in \eqref{rho} restricted to $\tfS_N$;
furthermore, the representation of $\tfS_N$ on $\tell^2(\Nm)$ 
obtained by restricting
$\rho$ obviously agrees with the natural representation of
$\fS_N$ on $\ell^2(\Nm)$
given by the formula \eqref{rho}, 
now with $x_1,\dots,x_m\in [N]$ and $\gf\in\fS_N$.
Hence, the identifications of $\tfS_N$ with $\fS_N$,
and $\tell^2(\Nm)$ with $\ell^2(\Nm)$  cause no problem with the
representations.
We will therefore use the same letter $\rho$
for these  representations 
of $\GG$, $\tfS_N$ and $\fS_N$
on $L^2(\oi^m)$ and $\ell^2(\Nm)$ or various subspaces of them.
(The representation depends on $N$ and $m$, 
and we may write $\rho^{(m)}$ for clarity,
but usually $m$ and $N$ are
clear from
the context and not shown explicitly.)

Recall the notation $\rhof$ in \eqref{hull}.

\begin{lemma}\label{L2}
  For every $f\in \Lm$ and $N\in\bbN$, we have $P_Nf\in\rhof$.
\end{lemma}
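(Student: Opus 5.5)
The plan is to exhibit $P_Nf$ as a limit of averages of translates $\rho_\gf(f)$. Such averages lie in $\rhof$ because $\rhof$ is a closed linear subspace. The natural group to average over is the group of \mpp{} bijections that preserve each interval $I_{N,i}$ and act inside it by "mixing" maps. The simplest concrete choice uses the subgroups $\tfS_{nN}$, restricted to those permutations that map each block of $n$ consecutive subintervals of length $1/(nN)$ into itself. Call this subgroup $H_{n}\cong(\fS_n)^N$; it is finite, and every element is of the form $\tgs$ with $\gs\in\fS_{nN}$.

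\textbf{Step 1 (block averages).} For fixed $n$, define
\begin{align*}
A_n f:=\frac1{|H_n|}\sum_{\gs\in H_n}\rho_{\tgs}(f)\in\rhof .
\end{align*}
I will compute $A_nf$ first for a step function $g\in\tell^2(\Nx{m}_{nN})$, i.e.\ one that is constant on the cubes $Q_{nN,\bj}$. Since $H_n$ acts on the fine indices within each coarse block, $A_ng$ is again such a fine step function. Its value on a fine cube is the average of $g$ over the $H_n$-orbit of that cube's index.

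\textbf{Step 2 (comparing orbit averages with $P_N$).} The orbit of a fine index $\bj=(j_1,\dots,j_m)$ under $H_n$ consists of all indices with the same coarse blocks and the same coincidence pattern among coordinates lying in the same block. This is not the full coarse cube, but the tuples with some coincidence occupy a fraction $O(m^2/n)$ of each coarse cube. Consequently $\norm{A_ng-P_Ng}_2\le C_m n^{-1/2}\norm{g}_2$, or a bound of similar type; I will get it by splitting into the "all coordinates distinct" part, where the orbit is exactly the set of distinct tuples in the coarse cube, and the remainder, which has small measure. Both $A_n$ and $P_N$ are contractions, so it suffices to check this on a dense set.

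\textbf{Step 3 (density and limit).} For general $f\in\Lm$, choose a fine step approximation $g=P_{nN}f$. Then $\norm{A_nf-A_ng}_2\le\norm{f-g}_2\to0$ as $n\to\infty$, since $P_{nN}f\to f$. We also have $P_Ng=P_NP_{nN}f=P_Nf$. Together with Step 2, this gives $A_nf\LLto P_Nf$, and closedness of $\rhof$ yields $P_Nf\in\rhof$.

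I expect Step 2 to be the main obstacle: making the orbit bookkeeping rigorous and uniform in $g$. The "distinct coordinates" reduction should handle it. The estimate needed is that, for the orbit average, the mass lying on the diagonal set of fine tuples is controlled by $\norm{g}_2$ restricted to that set plus a term of order $n^{-1}$.
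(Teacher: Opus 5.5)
Your strategy works and is built on the same averaging operator as the paper: your $H_n$ is the paper's $\tG_{N,n}\cong\fS_n^N$, and your $A_n$ is its $T_{N,n}$. However, the estimate claimed in Step 2 is false as stated. No bound $\norm{A_ng-P_Ng}_2\le C_mn^{-1/2}\norm{g}_2$ can hold uniformly over fine step functions $g$.

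Here is a counterexample. Take $N=1$, $m=2$, let $\mathcal{E}_n:=\bigcup_{j=1}^n Q_{n,(j,j)}$ (measure $1/n$), and set $g:=\sqrt n\,\etta_{\mathcal{E}_n}$. The diagonal fine cubes form a single $\fS_n$-orbit on which $g$ is constant, so $A_ng=g$. On the other hand, $P_1g=\int g=n^{-1/2}$. Hence $\norm{g}_2=1$ but $\norm{A_ng-P_1g}_2=(1-1/n)^{1/2}$.

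The coincidence set having small measure does not control the $L^2$ mass that an arbitrary $g$ puts there. The remark that ``it suffices to check this on a dense set'' does not help either, since $g$ ranges over a space that depends on $n$.

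What your splitting actually gives is
\[
\norm{A_ng-P_Ng}_2\le\norm{g\,\etta_{\mathcal{E}}}_2+C_mn^{-1/2}\norm{g}_2 .
\]
Here $\mathcal{E}$ is the union of the fine cubes $Q_{nN,\mathbf{j}}$ with $j_k=j_l$ for some $k\neq l$, so $|\mathcal{E}|\le\binom m2/(nN)$. The bound comes from two pieces.
\begin{itemize}
\item On the off-diagonal fine cubes, $A_ng$ is the average of $g$ over all distinct tuples of the coarse cube. By Cauchy--Schwarz, this differs from the full coarse average by at most $C_mn^{-1/2}$ times the local root mean square of $g$.
\item On $\mathcal{E}$, which is $H_n$-invariant, $A_n$ is a contraction, and $\norm{(P_Ng)\etta_{\mathcal{E}}}_2^2\le\binom m2 n^{-1}\norm{g}_2^2$.
\end{itemize}

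Step 3 then needs one more observation, which is where absolute continuity enters. Since $\mathcal{E}$ is a union of fine cubes, $(P_{nN}f)\etta_{\mathcal{E}}=P_{nN}(f\etta_{\mathcal{E}})$. So for $g=P_{nN}f$ we get $\norm{g\,\etta_{\mathcal{E}}}_2\le\norm{f\,\etta_{\mathcal{E}}}_2\to0$. With this repair, which your last paragraph essentially anticipates, the argument is complete.

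The paper proves $T_{N,n}f\LLto P_Nf$ differently. It treats each coarse cube $Q_{N,\bi}$ separately and rescales it to $\oio^m$. On product functions $g_1(x_{J_1})\dotsm g_\ell(x_{J_\ell})$, where the $J_k$ group the coordinates sharing a coarse index, the operator factorizes as $T_ng_1\dotsm T_ng_\ell$. The paper then invokes the one-block case \refL{L1}. That lemma is proved by density, using continuous mean-zero $h$ vanishing near the diagonals, for which $T_nP_nh=0$ exactly.

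Both proofs rest on two facts: the symmetric group acts transitively on tuples of distinct indices, and the diagonal has vanishing measure. The paper removes the diagonal by choosing a dense class of functions that vanish there; you carry an explicit diagonal error term instead. Your route is a single direct computation that skips the factorization step and gives a rate off the diagonal. The paper's route is more modular, isolating the $N=1$ case as a separate lemma.
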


We prove first the case $N=1$, in the following more precise form.
Note that $P_1$ is the projection $f\mapsto\int_{\oi^m}f$ onto the constant
functions. 
\begin{lemma}\label{L1}
For $n\in\bbN$, define 
  \begin{align}
    \label{d5}
T_n:=\frac{1}{n!}\sum_{\gs\in \tfS_n}\rho_\gs.
  \end{align}
Then, for every $f\in \Lm$, we have 
as \ntoo,
\begin{align}\label{d6}
  T_n (f) \LLto P_1(f) 
.\end{align}
\end{lemma}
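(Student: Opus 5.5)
Since $T_n$ is an average of unitary operators, $\norm{T_n}\le1$ for every $n$. Also $P_1T_n=P_1$, because each $\rho_\gs$ with $\gs\in\tfS_n$ preserves integrals. Hence $T_n-P_1$ is uniformly bounded, and it is enough to prove \eqref{d6} for $f$ in a dense subset of $\Lm$. I will use the step functions $f\in\tell^2(\Nx{m}_M)$ for $M\in\bbN$, i.e.\ the union over $M$ of the spaces $\tell^2([M]^m)$. In fact, by linearity it suffices to take $f=\etta_{Q}$ with $Q=Q_{M,\bi}$ a single subcube of side $1/M$. An $\eps/3$-argument with the uniform bound then gives \eqref{d6} for general $f$.

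Fix such an $f=\prod_{j=1}^m \etta_{I_{M,i_j}}(x_j)$. For $n$ a multiple of $M$, each $I_{M,i}$ is a union of $n/M$ intervals $I_{n,\cdot}$. For a uniformly random $\gs\in\fS_n$, the operator $\rho_\gs$ maps $f$ to $\etta_{\tgs(Q)}$, where $\tgs(I_{M,i})$ is the union of the images of those $n/M$ small intervals. So $T_nf(x)$ is the probability that $x\in\tgs(Q)$. Take $x$ off the null set of boundaries, with $x_j\in I_{n,a_j}$. Then $T_nf(x)=\mathbb P\bigl(\gs\qw(a_j)\in B_{i_j}\ \forall j\bigr)$, where $B_i\subseteq[n]$ is the set of indices $l$ with $I_{n,l}\subseteq I_{M,i}$.

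The points $\gs\qw(a_1),\dots,\gs\qw(a_m)$ behave like a uniform sample without replacement from $[n]$: coordinates with equal $a_j$ give equal values, and distinct $a_j$ give distinct values. For $x$ outside the set $D_n$ where two coordinates lie in the same interval $I_{n,\cdot}$, the $a_j$ are distinct. The probability is then $\prod_j (n/M)/n + O(1/n) = M^{-m}+O(1/n)=P_1f+O(1/n)$, uniformly in $x$. The set $D_n$ has measure at most $\binom m2/n$, and $|T_nf|\le1$. Hence $\norm{T_nf-P_1f}_2\to0$ along multiples of $M$.

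For general $n$ not divisible by $M$ the boundary intervals are split, and this is the main technical nuisance. I would handle it by noting that $I_{M,i}$ differs from a union of $I_{n,\cdot}$ by at most two intervals of length $1/n$. So $f$ differs in $L^2$ by $O(n^{-1/2})$ from a step function $g$ on the $n$-grid of the same form, namely a product of indicators of unions $C_j$ of roughly $n/M$ grid intervals. Applying the same sampling computation to $g$ gives $T_ng\to P_1 g$ in $L^2$, using the bound $\norm{T_n}\le1$ for the difference $f-g$. Alternatively, one can avoid this step by choosing the dense class more cleverly: for any fixed $n$, approximate $f$ by $n$-grid step functions directly and use the sampling computation for $T_n$ acting on $\tell^2([n]^m)$. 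In that computation only the measures $|C_j|$ enter, up to $O(1/n)$ and the diagonal set $D_n$. This makes the estimate uniform and completes the proof of \eqref{d6}.
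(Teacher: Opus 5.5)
Your argument is correct, but it is organized differently from the paper's. The paper reduces to $f$ with $P_1f=0$ that are continuous and vanish whenever $|x_i-x_j|\le\gd$ for some $i\neq j$. For large $n$ the step function $P_nf$ is then $0$ on every diagonal cube. Since $\fS_n$ acts transitively on off-diagonal index tuples, $T_n(P_nf)$ equals, on each off-diagonal cube, the average of $P_nf$ over all off-diagonal cubes, and this average is $0$. Hence $T_nP_nf=0$ exactly, and $\normm{T_nf-P_1f}\le\normm{f-P_nf}\le\eps$.

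You instead take indicators of cubes $Q_{M,\bi}$ at a fixed scale $M$. You compute $T_n\etta_Q(x)$ explicitly as a sampling-without-replacement probability, obtaining $P_1f+O(1/n)$ outside a diagonal set of measure $O(1/n)$. The price is the extra step for $M\nmid n$, and your fix there is sound. Replacing each $I_{M,i}$ by the union of the $n$-grid intervals it contains costs $O(n^{-1/2})$ in $L^2$; you then use $\norm{T_n}\le1$ and $|P_1f-P_1g|\le\normm{f-g}$. This works because the sampling computation applies to arbitrary unions of grid intervals.

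Both proofs rest on the same fact, the transitivity of $\fS_n$ on injective $m$-tuples. The paper's choice of dense class kills the diagonal contribution identically and works directly at scale $n$, so no divisibility issue arises. Yours avoids continuity and the mean-zero reduction, and gives an explicit rate for cube indicators. Your closing ``alternatively'' sketch is too vague to stand on its own, but the first version does not need it.
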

\begin{proof}
$T_n$ is an operator of norm 1 on $\Lm$.
(Actually, it is the projection onto the subspace of functions invariant for
the action of $\tfS_n$, but we do not need this.)

Since \eqref{d6} trivially is true when $f$ is a constant, it suffices to
prove \eqref{d6} for functions $f$ with $P_1f=\int_{\oio^m}f=0$.
Furthermore, since every $T_n$ has norm 1, it suffices to prove \eqref{d6}
for $f$ in a dense subspace. We may therefore assume, 
in addition to $P_1f=0$, that $f$ is  continuous, and that
for some $\gd>0$, we have $f(x_1,\dots,x_m)=0$ whenever $|x_i-x_j|\le\gd$
for some pair of distinct $i,j\in[m]$.

Let $\eps>0$.
Since $f$ is continuous, there exists $\eta>0$ such that
$|f(\bx)-f(\by)|<\eps$ 
when $\bx,\by\in\oio^m$ with 
$|\bx-\by|<\eta$. 
It follows that if $n$ is large enough, then 
$|f(\bx)-f(\by)|<\eps$ whenever $\bx$ and $\by$ belong to the same cube
$Q_{n,\bi}$, and as a consequence, using \eqref{d4}, $|P_nf(x)-f(x)|\le\eps$
for every $\bx\in\oio^m$. In particular, for large $n$,
\begin{align}\label{d7}
  \norm{P_nf-f}_2\le\eps.
\end{align}
Assume also that $n>\gd\qw$. Then, if $\bi\in\Dnm$
and $\bx=(x_1,\dots,x_m)\in Q_{n,\bi}$, we have $i_j=i_k$ for some pair
$j<k$ and thus $|x_j-x_k|<1/n<\gd$; hence, by our assumption,
$f(\bx)=0$ on $Q_{n,\bi}$. 
Consequently, $f=0$ on  every diagonal cube,
and thus also $P_nf=0$ there.

Consider now $T_n(P_nf)$, and note first that this function, 
as $P_nf$, is constant on every cube $Q_{n,\bi}$. 
Since every $\gs\in\fS_n$ permutes the diagonal cubes 
$Q_{n,\bi}$, it follows from \eqref{d5} that $T_n(P_nf)=0$ on every diagonal cube.
Furthermore, also the off-diagonal cubes are permuted; moreover, 
when $\fS_n$ acts on the set of indices $\nm$, 
then the number of permutations
$\gs\in\fS_n$ mapping a given off-diagonal index onto another is the constant
$n!/m!$. Consequently, it follows from \eqref{d5} that 
the value of $T_n(P_nf)$ on an
off-diagonal cube is the average of $P_nf$ over all off-diagonal cubes.
Since $P_nf$ has average 0 and is 0 on the diagonal cubes, this yields
$T_n(P_nf)=0$ on every off-diagonal cube.
Consequently, $T_n(P_nf)=0$, provided $n$ is large enough.

Hence, for large $n$,
\begin{align}
\normm{ T_n(f)-P_1f}
=\normm{ T_n(f)}
=\normm{ T_n(f)-T_n(P_nf)}
\le \normm{f-P_nf} \le\eps.
\end{align}
This proves that $T_nf\to P_1f$ for $f$ of the special type considered in
the proof, and thus as said above for all $f\in \Lm$; this verifies \eqref{d6}
and completes the proof.
\end{proof}

\begin{proof}[Proof of \refL{L2}]
  Let $n\in\bbN$, and consider the subgroup $G_{N,n}$ of $\fS_{nN}$ that
preserves the $N$ subsets $\set{1,\dots,n},\dots,\set{(N-1)n+1,\dots,Nn}$
of $[nN]$.
This group is obviously and naturally isomorphic to $\fS_n^N$.
Furthermore,  the corresponding subgroup $\tG_{N,n}$ of $\tfS_{nN}$
preserves every interval $I_{N,i}$ of length $1/N$.
We now define, in analogy with \eqref{d5},
  \begin{align}
    \label{d8}
T_{N,n}:=\frac{1}{n!^N}\sum_{\gs\in \tG_{N,n}}\rho_\gs.
  \end{align}
Again, this is an operator on $\Lm$ of norm 1.
We claim that, for every $f\in \Lm$,
\begin{align}  \label{d9}
T_{N,n}(f)\LLto P_N(f),
\qquad\text{as \ntoo}.
\end{align}
To see this, note first that 
each $\gs\in \tG_{N,n}$ preserves each cube $Q_{N,\bi}$,
so we can study these cubes individually. 
If we map $Q_{N,\bi}$ onto the entire cube $\oio$ by the obvious linear map,
then the action of $T_{N,n}$ on $L^2(Q_{N,\bi})$ corresponds to the operator
\begin{align}\label{dd1}
  \Tnni g(x_1,\dots,x_m)
=\frac{1}{n!^N} \sum_{\gs_1,\dots,\gs_N\in\tfS_n}  
 g(\gs_{i_1}(x_1),\dots,\gs_{i_m}(x_m))
\end{align}
acting on $g\in L^2(\oio^m)=\Lm$.

This action on 
the cube $Q_{N,\bi}$ depends on the index $\bi=(i_1,\dots,i_m)$; 
more precisely, it depends
on which equalities $i_j=i_k$ there are among the indices. 
Consider a case where there are $\ell$ different values among
$\set{i_1,\dots,i_m}$; we may without loss of generality assume that these
are $1,\dots,\ell$. Let $J_k:=\set{j:i_j=k}$ for $k\in[\ell]$.
Suppose that $g$ is of the special form
\begin{align}\label{hhh}
  g(x_1,\dots,x_m)=g_1(x_{J_1}) \dotsm g_\ell(x_{J_\ell}),
\end{align}
where $g_k\in L^2(\oi^{|J_k|})$ for $k\in[\ell]$.
(Recall the notation \eqref{da3}, and note that this is a more general
version of \eqref{hh}.)
Then \eqref{dd1} yields, using the notation \eqref{d5},
\begin{align}\label{thhh}
  \Tnni g(x_1,\dots,x_m)
=
T_{n}g_1(x_{J_1})\dotsm T_{n}g_\ell(x_{J_\ell})
.\end{align}
\refL{L1} shows that, as $n\to\infty$,
each $T_ng_k\LLto P_1 g_k$, and thus if follows from \eqref{thhh} that
\begin{align}\label{thx}
  \Tnni g(x_1,\dots,x_m)
\LLto
P_1g_1(x_{J_1}) \dotsm P_1g_\ell(x_{J_\ell})
=\prod_{k=1}^\ell \int_{\oi^{|J_k|}} g_k
=\int_{\oi^m}g,
\end{align}
for every $g\in \Lm$ of the special type \eqref{hhh}. 
Linear combinations of such functions are dense in $\Lm$, as is seen, for
example, by the Fourier series expansion. Since $T_{N,n,\bi}$ has norm 1, it
follows that
\begin{align}\label{thy}
  \Tnni g(x_1,\dots,x_m)
\LLto
\int_{\oi^m}g
\end{align}
for every $g\in\Lm$.

Returning to the cube $Q_{N,\bi}$, \eqref{thy} shows that
$T_{N,n}f \to N^m\int_{Q_{N,\bi}}f=P_Nf$ in $L^2(Q_{N,\bi})$,
for every $\bi\in\Nm$, which yields \eqref{d9}.
This completes the proof, since evidently $T_{N,n}(f)\in\rhof$.
\end{proof}

\subsection{Spaces of step functions}\label{SSdisc2}
Let $N\ge1$ and $m\ge0$.
We only need the results for large $N$;
we ignore complications that occur for small $N$ by making explicit assumptions
on $N$ when needed. (We invite the curious reader to investigate the
exceptional cases with small $N$.)
Recall that we may identify the 
$N^m$-dimensional complex vector space $\ell^2(\Nm)$  of
complex-valued functions on $\Nm$
and the corresponding space $\tell^2(\Nm)\subset L^2(\oi^m)$ 
of step functions constant on every cube $Q_{N,\bi}$.
We define some subspaces of $\ell^2(\Nm)$ that will be important;
these can be identified with the corresponding subspace of
$\tell^2(\Nm)\subset L^2(\oi^m)$.

First, we define  the set of \emph{diagonal} indices
\begin{align}\label{di1}
 \DNm:=\bigset{(i_1,\dots,i_m)\in\Nm: i_k=i_\ell \text{ for some pair $k<\ell$}},
\end{align}
its complement,
the \emph{off-diagonal} indices,
\begin{align}
\DcNm:=\Nm\setminus \DNm
=\bigset{(i_1,\dots,i_m)\in\Nm: i_1,\dots,i_m \text{ are distinct}}.
\end{align}
We will later also use (for functions in $\tell(\Nm)$) 
the corresponding subsets of $\oio^m$:
\begin{align}\label{di3}
 \cDNm&:=\bigcup_{\bi\in \DNm} Q_{N,\bi} ,
\\\label{di4}
 \cDcNm&:=\bigcup_{\bi\notin \DNm} Q_{N,\bi} = \oio^m\setminus \cD_N,
\end{align}

We will mainly be interested in 
symmetric
functions in $\ell^2(\Nm)$ that vanish on the
set $\DNm$ of diagonal indices defined in \eqref{di1}. 
We assume in the sequel $N\ge m$ (so that $\DcNm\neq\emptyset$).
We use the natural identification
\begin{align}
  \label{lo0}
\ell^{2}(\DcNm)&=\bigset{f\in\ell^2(\Nm):f(\bi)=0 \text{ for }\bi\in \DNm}.
\end{align}
We further define
\begin{align}
\ell^{2,\mathsf s}(\Nm)&:=
\text{the subspace of symmetric functions in } \ell^2(\Nm),\label{lo00}
\\
\lls(\DcNm)&:=\ell^{2}(\DcNm)\cap \ell^{2,\mathsf s}(\Nm)\label{lo1}
.\end{align}

Recall that
$\ZZ Nm$ denotes the set of all $\binom Nm$ subsets of ${N}$ of size $m$.
We can identify the space of symmetric functions on $\DcNm$
with the space of functions on $\ZZ Nm$; thus
\begin{align}\label{llD}
\lls(\DcNm) =\ell^2\bigpar{\tZZ Nm},
\end{align}
where we
normalize the counting measure on $\ZZ Nm$ such that
\eqref{llD} is an isometry.
In particular,
\begin{align}\label{a2}
  \dim(\lls(\DcNm)) = \tbinom Nm.
\end{align}
Let $\nu_{N,m}$ denote the counting measure on $\ZZ Nm$
with our normalization used in \eqref{llD}; 
then its total mass is
\begin{align}\label{ste3}
  \nu_{N,m}\bigpar{\tZZ Nm} = N^{-m}|\DcNm| = \frac{N!/(N-m)!}{N^m}.
\end{align}
Note that for a fixed $m$, this increases to 1 as $\Ntoo$, and that for 
every $N\ge m$ we have
\begin{align}\label{ste4}
  m!/m^m \le \nu_{N,m}\bigpar{\tZZ Nm} < 1.
\end{align}

For $0\le k\le m$, 
define a linear map
$\PhiN_{m,k}:\lls(\DcNk)\to \lls(\DcNm)$ 
by,
for distinct $x_1,\dots,x_m\in[N]$,
\begin{align}\label{PhiN}
(\PhiN_{m,k}f)(x_1,\dots,x_m):=\sum_{I\in \ZZ mk} f(x_I),
\end{align}
where we 
recall the notation \eqref{da3}, and
sum over the $\binom mk$ subsets of $[m]$ of size $k$.
Equivalently, we can regard $\PhiN_{m,k}$ as  a linear map
$\ell^2\bigpar{\ZZ Nk}\to\ell^2\bigpar{\ZZ Nm}$
defined by, for $I\in\ZZ Nm$,
\begin{align}\label{Phix1}
(\PhiN_{m,k}f)(I):=\sum_{J\subseteq I: |J|=k} f(J).
\end{align}

Denote the range of $\PhiN_{m,k}$ by
\begin{align}\label{a5}
\llslek(\DcNm)
:=\PhiN_{m,k}(\lls(\DcNk))
\subseteq \lls(\DcNm).
\end{align}

If $0\le k\le \ell\le m$ and $f\in \lls(\DcNk)$, then,
for $(x_1,\dots,x_m)\in \DcNm$,
\begin{align}\label{a6}
&  \PhiN_{m,\ell}\PhiN_{\ell,k} f(x_1,\dots,x_m)
=\sum_{I\in \ZZ m\ell} \PhiN_{\ell,k}f(x_I)
=\sum_{I\in \ZZ m\ell,\; J\in \ZZq Ik} f(x_J)
\notag\\&\qquad
=\sum_{J\in \ZZ mk} \binom{m-k}{\ell-k}f(x_J)
=\binom{m-k}{\ell-k}\PhiN_{m,k}f(x_1,\dots,x_m).
\end{align}
Thus,
$\PhiN_{m,\ell}\PhiN_{\ell,k}=\binom{m-k}{\ell-k}\PhiN_{m,k}$,
and it follows that, for $0\le k\le \ell\le m$,
\begin{align}\label{a7}
\PhiN_{m,\ell}\bigpar{\llslek(\DcNx\ell)}
=\PhiN_{m,\ell}\PhiN_{\ell,k}\bigpar{\lls(\DcNk)}
=\PhiN_{m,k}\bigpar{\lls(\DcNk)}
=\llslek(\DcNm).  
\end{align}

\begin{lemma}\label{LR0}
Let $0\le k\le m$ and assume $N\ge m+k$. Then the map 
$\PhiN_{m,k}:\lls(\DcNk)\to \lls(\DcNm)$ is injective.
Hence, $\PhiN_{m,k}$ is a bijection
$\lls(\DcNk)\to \llslek(\DcNm)$.
Furthermore, there is a constant $C_{m,k}$ (not depending on $N$) such that
for every $f\in\lls(\DcNk)$,
\begin{align}
  \label{lr0}
\normm{f}\le C_{m,k} \normm{\PhiN_{m,k}f}.
\end{align}
\end{lemma}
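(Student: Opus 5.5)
We identify $\lls(\DcNk)$ with $\ell^2\bigpar{\ZZ Nk}$ and $\lls(\DcNm)$ with $\ell^2\bigpar{\ZZ Nm}$ as in \eqref{llD}. Then $\PhiN_{m,k}$ is the inclusion map \eqref{Phix1} of the Johnson scheme. The plan is to bound the quadratic form $\normm{\PhiN_{m,k}f}^2$ from below by a constant times $\normm{f}^2$, uniformly in $N\ge m+k$. This gives injectivity at once. Bijectivity onto $\llslek(\DcNm)$ is then just the definition \eqref{a5}.

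First we compute the adjoint $A:=(\PhiN_{m,k})^*\PhiN_{m,k}$ on $\ell^2\bigpar{\ZZ Nk}$, keeping track of the normalizations $N^{-k}$ and $N^{-m}$ of the counting measures. For $J,J'\in\ZZ Nk$ with $|J\cap J'|=k-j$, the unnormalized matrix entry is the number of $I\in\ZZ Nm$ containing $J\cup J'$, namely $\binom{N-k-j}{m-k-j}$. Thus $A=N^{k-m}\sum_{j=0}^k\binom{N-k-j}{m-k-j}A_j$, where $A_j$ is the $j$th distance matrix of the Johnson scheme $J(N,k)$. All the $A_j$ commute and share the eigenspaces $V_0,\dots,V_k$; these are the standard ones, with $V_i$ of dimension $\binom Ni-\binom N{i-1}$, and the decomposition requires $N\ge 2k$, which holds since $N\ge m+k\ge 2k$.

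The eigenvalues of $A$ are therefore explicit. A cleaner route avoids Eberlein polynomials: write $\PhiN_{m,k}=c\,\PhiN_{m,k}$ restricted to each $V_i$, and use the classical fact that the inclusion map $W_{k,m}\colon \bbR^{\binom{[N]}k}\to\bbR^{\binom{[N]}m}$ satisfies, on $V_i$,
\begin{align*}
W_{k,m}^{\mathsf T}W_{k,m}=\binom{m-i}{k-i}\binom{N-k-i}{m-k}\,\mathrm{Id}.
\end{align*}
This is Wilson's/Gottlieb's theorem. One can also derive it from \eqref{a6}-type compositions together with the adjoint relation $W^{\mathsf T}_{k,k+1}W_{k,k+1}-W_{k-1,k}W_{k-1,k}^{\mathsf T}=(N-2k)\mathrm{Id}$, proved by direct counting and then inducted. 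The factor $\binom{N-k-i}{m-k}$ is nonzero precisely when $N-k-i\ge m-k$, i.e.\ $N\ge m+i$. Since $i\le k$, the hypothesis $N\ge m+k$ guarantees positivity for every $i$, which gives injectivity.

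For the uniform bound \eqref{lr0}, include the normalizations. The eigenvalue on $V_i$ becomes $N^{k-m}\binom{m-i}{k-i}\binom{N-k-i}{m-k}$. We bound it below uniformly for $N\ge m+k$: for $N\ge 2m$ use $\binom{N-k-i}{m-k}\ge (N/2)^{m-k}/(m-k)!$ or a similar estimate, and the finitely many $N$ with $m+k\le N<2m$ each give a positive value. The minimum over $i$ and $N$ is then a positive constant $C_{m,k}^{-2}$.

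The main obstacle is establishing the eigenvalue formula (or at least positive definiteness with the correct uniform rate). I would first try the inductive commutation-relation argument, since it uses only elementary counting.
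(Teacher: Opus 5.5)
Your proposal is correct, but it takes a genuinely different route from the paper.

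\textbf{How the paper argues.} The paper never diagonalizes anything. For injectivity it fixes disjoint $A,B\subseteq[N]$ with $|A|=k$ and $|B|=m$; this is exactly where $N\ge m+k$ enters. It lets $y_j$ be the sum of $f$ over the $k$-subsets of $A\cup B$ meeting $A$ in $j$ points, and $z_j$ the analogous sum of $\PhiN_{m,k}f$ over $m$-subsets. Double counting gives a lower-triangular system with positive diagonal \eqref{lr03}, so $\PhiN_{m,k}f=0$ forces $y_k=f(A)=0$.

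For the uniform constant, the paper takes $N=m+k$, where some constant exists by finite-dimensionality. For general $N$, the restriction of $\PhiN_{m,k}$ to functions on $\binom{X}{k}\to\binom{X}{m}$ is a copy of that map for every $X\in\binom{[N]}{m+k}$. Averaging the inequality over all such $X$ gives the bound, and changing to the normalization \eqref{ste3} costs only the bounded factor in \eqref{ste4}. No eigenvalues are computed and $C_{m,k}$ is not explicit.

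\textbf{How your route compares.} You use the spectral theory of inclusion matrices on the Johnson scheme. The eigenvalue $\binom{m-i}{k-i}\binom{N-k-i}{m-k}$ of $W_{k,m}^{\mathsf T}W_{k,m}$ on $V_i$ is correct. Your commutation relation $W_{k,k+1}^{\mathsf T}W_{k,k+1}-W_{k-1,k}W_{k-1,k}^{\mathsf T}=(N-2k)\mathrm{Id}$ is also correct and suffices to derive that formula by the usual induction.

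What this buys you:
\begin{romenumerate}
\item explicit eigenvalues, hence the optimal constant $C_{m,k}$;
\item a transparent proof that $N\ge m+k$ is sharp, since the eigenvalue $\binom{N-2k}{m-k}$ on $V_k$ vanishes for $2k\le N<m+k$;
\item the observation that the $V_i$ coincide with the spaces $\lls_i(\DcNk)$ of \eqref{ste12}, the irreducible pieces of \refT{TR1}.
\end{romenumerate}
The cost is that you must import, or re-derive, a nontrivial piece of algebraic combinatorics, including the decomposition of $\ell^2\bigl(\binom{[N]}{k}\bigr)$ into the $V_i$ for $N\ge 2k$. The paper's half-page argument is self-contained.

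\textbf{Minor slips.}
\begin{romenumerate}
\item The sentence ``write $\PhiN_{m,k}=c\,\PhiN_{m,k}$ restricted to each $V_i$'' is garbled; you presumably mean that $W_{k,m}^{\mathsf T}W_{k,m}$ acts as a scalar on each $V_i$.
\item Under \eqref{llD} each $k$-set carries mass $k!N^{-k}$, not $N^{-k}$. Your normalized eigenvalue is therefore off by the factor $m!/k!$, which is harmless.
\item The estimate $\binom{N-k-i}{m-k}\ge (N/2)^{m-k}/(m-k)!$ for $N\ge 2m$ is false in general. Take $i=k=m-1$, $N=2m$, $m\ge3$: the left side is $2$ and the right side is $m$. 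It is enough to note that $N^{k-m}\binom{N-k-i}{m-k}\to 1/(m-k)!$ as $N\to\infty$ and is positive for each $N\ge m+k$, so its infimum over $N\ge m+k$ is positive.
\end{romenumerate}
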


It follows from \eqref{a2} that 
when $N<m+k$,
$\PhiN_{m,k}$ is \emph{not} injective 
(and thus \eqref{lr0} does not hold)
except in the trivial cases $k=m$ or $N<k$.

\begin{proof}
In this proof it is convenient to regard $\PhiN_{m,k}$ as the map
$\ell^2(\ZZ Nk)\to\ell^2(\ZZ Nm)$ given by \eqref{Phix1}.

Let $f\in \ell^2(\ZZ Nk)$.
  Fix two disjoint sets $A,B\subseteq[N]$ with $|A|=k$ and $|B|=m$.
For $j=0,\dots,k $,
let
\begin{align}\label{lr01}
  y_j&:=\sum_{I\in\ZZq Aj,\,J\in\ZZq B{k-j}} f(I\cup J),
\\\label{lr02}
  z_j&:=\sum_{I\in\ZZq Aj,\,J\in\ZZq B{m-j}} \PhiN_{m,k}f(I\cup J).
\end{align}
Then, by \eqref{Phix1} and interchanging the order of summation,
\begin{align}\label{lr03}
    z_j&=\sum_{I\in\ZZq Aj,\,J\in\ZZq B{m-j}} 
\sum_{i=0}^j
\sum_{I'\in\ZZq Ii,\,J'\in\ZZq J{k-i}} f(I'\cup J')
\notag\\&
=\sum_{i=0}^j
\sum_{I'\in\ZZq Ai,\,J'\in\ZZq B{k-i}} 
\binom{k-i}{j-i}\binom{m-k+i}{m-j-k+i}
f(I'\cup J')
\notag\\&
= \sum_{i=0}^j\binom{k-i}{j-i}\binom{m-k+i}{m-j-k+i}y_i.
\end{align}
Now suppose that $\PhiN_{m,k}f=0$.  
Then all $z_j=0$, and \eqref{lr03} yields a triangular
system of equations for $y_0,\dots,y_k$, with strictly positive
coefficients on the diagonal. 
Hence this system has the unique solution $y_0=\dots = y_k=0$.
Moreover, \eqref{lr01} yields $y_k=f(A)$, and thus $f(A)=0$.
Since $A$ is arbitrary, we see that $f=0$, and thus $\PhiN_{m,k}$ is injective.
Hence,
$\PhiN_{m,k}$ is a bijection of $\llll(\ZZ Nk)$ onto its image,
which by the identification \eqref{llD} and
\eqref{a5} means that $\PhiN_{m,k}$ is a bijection of
$\lls(\DcNk)$ onto $\llslek(\DcNm)$.

In particular, taking $N=m+k$, $\PhiN_{m,k}$ is a bijection of
$\llll\bigpar{\ZZ{m+k}k}$ onto a subset of
$\llll\bigpar{\ZZ{m+k}m}$.
Since these spaces are
finite-dimensional, it follows that there is a constant $C_{m,k}$ such that,
for all $f\in\llll\bigpar{\ZZ{m+k}k}$,
\begin{align}\label{lr04}
  \frac{1}{\binom{m+k}k} \sum_{I\in\ZZ{m+k}{k}}|f(I)|^2
=\normm{f}^2
\le C_{m,k}^2\normm{\PhiN_{m,k}f}
=  \frac{C_{m,k}^2}{\binom{m+k}m} \sum_{J\in\ZZ{m+k}{m}}|\PhiN_{m,k}f(J)|^2.
\end{align}
Now, let $N\ge m+k$ be arbitrary, and take a set $X\in\ZZ N{m+k}$.
It follows from the definition \eqref{Phix1} that the restriction 
of $\PhiN_{m,k}f$ to $\ZZq Xm$ depends only on the restriction of $f$ to
$\ZZq Xk$; furthermore, this restriction defines a map 
$\ell^2\bigpar{\ZZq Xk}\to \ell^2\bigpar{\ZZq Xm}$ which up to notational
differences is the same as 
$\PhiN_{m,k}:\ell^2\bigpar{\ZZ {m+k}k}\to \ell^2\bigpar{\ZZ {m+k}m}$.
Hence, \eqref{lr04} yields
\begin{align}\label{lr05}
  \frac{1}{\binom{m+k}k} \sum_{I\in\ZZq{X}{k}}|f(I)|^2
\le C_{m,k}^2 \frac{1}{\binom{m+k}m} \sum_{J\in\ZZq{X}{m}}|\PhiN_{m,k}f(J)|^2.
\end{align}
The \lhs{} is the average of $|f(I)|^2$ over all $I\in\ZZq Xk$, and
similarly for the \rhs.
Taking the average of \eqref{lr05} over all $X\in\ZZ N{m+k}$ yields the
averages over all $I\in\ZZ Nk$ and $J\in\ZZ Nm$, i.e.,
\begin{align}\label{lr06}
  \frac{1}{\binom{N}k} \sum_{I\in\ZZ{N}{k}}|f(I)|^2
\le C_{m,k}^2 \frac{1}{\binom{N}m} \sum_{J\in\ZZ{N}{m}}|\PhiN_{m,k}f(J)|^2.
\end{align}
This is \eqref{lr0} if we define the norms using counting measures
normalized to have total mass 1. We use instead the normalization \eqref{ste3},
but \eqref{ste4} shows 
that this changes the norms by at most a constant factor,
not depending on $N$. Hence, changing $C_{m,k}$, 
\eqref{lr0} is valid also for our norms.
\end{proof}

It follows from \eqref{a7} and \eqref{a5} that if $0\le k\le\ell$, then
\begin{align}\label{ste11}
\llslek(\DcNm)
=
\PhiN_{m,\ell}\bigpar{\llslek(\DcNx\ell)}
\subseteq
\PhiN_{m,\ell}\bigpar{\lls(\DcNx\ell)}
=
\llslex{\ell}(\DcNm).
\end{align}
Hence,
defining also $\llslex{-1}(\DcNm):=\set0$,
\begin{align}  \label{d2sN}
\set0
=
\llslex{-1}(\DcNm)
\subseteq
\llslex{0}(\DcNm)
\subseteq\dotsm\subseteq
\llslex{m-1}(\DcNm)
\subseteq \llslex{m}(\DcNm)
= \lls(\DcNm).
\end{align}
We define, analoguously to \refSS{SS1}, the orthogonal complemets
\begin{align}\label{ste12}
  \lls_k(\DcNm)
:=\llslex{k}(\DcNm)
\ominus 
\llslex{k-1}(\DcNm).
\end{align}
Then
$\lls_k(\DcNm)$, $k=0,\dots,m$, are pairwise orthogonal subspaces of
$\lls(\DcNm)$, and
\begin{align}\label{a10}
  \llslex{\ell}(\DcNm)=\bigoplus_{k=0}^\ell \lls_k(\DcNm),
\qquad 0\le \ell\le m.
\end{align}

\subsection{Some representations of the symmetric group}
\label{SSrep}

In the present subsection, we consider $\rho$ as a representation of $\fS_N$
on various spaces.
We  use the notation
$\rho\restr{V}$  when we want to emphasize that we consider
$\rho$ as a representation on a subspace $V\subseteq \ell^2(\Nm)$.

We will use some basic facts from the theory of finite-dimensional
representation of finite groups; see \refApp{Arep}
or, for example, \cite{FultonHarris} and \cite{Serre}.

\begin{remark}
  There is a large theory describing the representations of $\fS_N$, see for
  example \cite[Lecture 4]{FultonHarris} for some parts of it.
We avoid using any deep results here, and will only use basic representation
theory and some simple combinatorial calculations with characters.
The results below are presumably all known, but we do not
know any references and we give full proofs for completeness.
\end{remark}

The subspaces $\ell^{2}(\DcNm)$ and $\lls(\DcNm)$
defined in \eqref{lo0} and \eqref{lo1}
are clearly
invariant under $\rho=\rho^{(m)}$.
Furthermore, the map $\PhiN_{m,k}$ in \eqref{PhiN} intertwines the action of
$\fS_N$ on $\lls(\DcNk)$ and $\lls(\DcNm)$, i.e.,
\begin{align}\label{tw1}
  \PhiN_{m,k}(\rho^{(k)}_\gs f) = \rho^{(m)}_\gs(\PhiN_{m,k} f),
\qquad f\in \lls(\DcNk),\; \gs\in \fS_N.
\end{align}
It follows that also 
$\llslek(\DcNm)$ and $\lls_k(\DcNm)$  defined in
\eqref{a5} and \eqref{ste12}
are invariant under $\rho=\rho^{(m)}$.
Consequently, $\rho$ also yields representations of $\fS_N$
on these four spaces.

We denote the characters of the representation $\rho$ 
on some spaces by,
for $0\le k\le m$,
\begin{align}\label{lr00a}
  \chi_m:=\chi_{\rho\restr{\lls(\DcNm)}},
\qquad
  \chimle{k}:=\chi_{\rho\restr{\llslek(\DcNm)}},
\qquad
  \chi_{m,k}:=\chi_{\rho\restr{\lls_k(\DcNm)}}.
\end{align}
Recall that $\llslex{m}(\DcNm)=\lls(\DcNm)$ and thus $\chimle{m}=\chi_m$.
Furthermore:
\begin{lemma}\label{LR00}
  Let $0\le k\le m$ and assume $N\ge k+m$. Then the
representations $\rho\restr{\lls(\DcNk)}$ and $\rho\restr{\llslek(\DcNm)}$
of $\fS_N$
are isomorphic, 
and thus $\chimle{k}=\chi_k$.
\end{lemma}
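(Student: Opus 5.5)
The map $\PhiN_{m,k}$ will itself serve as the isomorphism. By \eqref{tw1} it intertwines $\rho^{(k)}$ on $\lls(\DcNk)$ with $\rho^{(m)}$ on $\lls(\DcNm)$. By definition \eqref{a5}, its image is exactly $\llslek(\DcNm)$. By \refL{LR0}, since $N\ge m+k$, it is injective, so it is a linear bijection $\lls(\DcNk)\to\llslek(\DcNm)$.

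A linear bijection that intertwines two representations is an isomorphism of representations. Concretely, for every $\gs\in\fS_N$ we have
\begin{align*}
\rho^{(m)}_\gs\restr{\llslek(\DcNm)} = \PhiN_{m,k}\,\rho^{(k)}_\gs\,\bigl(\PhiN_{m,k}\bigr)^{-1},
\end{align*}
where the inverse is taken on the image. The two representations are therefore equivalent, and conjugate operators have equal traces. This gives $\chimle{k}(\gs)=\chi_k(\gs)$ for all $\gs$, where $\chi_k=\chi_{\rho\restr{\lls(\DcNk)}}$ is the character defined in \eqref{lr00a} with $m$ replaced by $k$.

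Two points need a brief check. First, \eqref{tw1} holds: both sides evaluated at distinct $x_1,\dots,x_m$ equal $\sum_{I}f(\gs\qw(x)_I)$, since $\gs$ acts coordinatewise and commutes with selecting the coordinates $x_I$. Second, the image $\llslek(\DcNm)$ is invariant under $\rho^{(m)}$; this follows from \eqref{tw1}. The statement only asks for an isomorphism of representations, not a unitary one, so the non-isometric nature of $\PhiN_{m,k}$ causes no difficulty. (If unitarity were wanted, one could take the polar decomposition, but it is not needed here.)

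I expect no real obstacle. The substantive input, injectivity of $\PhiN_{m,k}$, is already supplied by \refL{LR0}, and that is precisely where the hypothesis $N\ge m+k$ is used.
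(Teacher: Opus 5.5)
Your proposal is correct and takes the same approach as the paper: \refL{LR0} gives injectivity, and \eqref{tw1} gives the intertwining property, so $\PhiN_{m,k}$ is an intertwining linear bijection of $\lls(\DcNk)$ onto $\llslek(\DcNm)$. Your check of \eqref{tw1} and your remark that a non-unitary intertwiner suffices are correct; they are details the paper leaves implicit.
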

\begin{proof}
This follows from \refL{LR0} and \eqref{tw1}, 
which show that  $\PhiN_{m,k}$ yields the desired isomorphism of the
representations.
\end{proof}

It follows from \eqref{a10}
that $\llslek(\DcNm)=\llslex{k-1}(\DcNm)\oplus \lls_k(\DcNm)$ and thus,
using the notation \eqref{lr00a},
$\chimle{k}=\chimle{k-1}+\chi_{m,k}$  (with $\chimle{-1}:=0$). Hence,
\begin{align}\label{b5a}
  \chi_{m,k}=\chimle{k}-\chimle{k-1},\qquad 0\le k\le m.
\end{align}
Consequently, \refL{LR00} implies (with $\chi_{-1}:=0$)
\begin{align}\label{b5b}
  \chi_{m,k}=\chi_k-\chi_{k-1}=:\chix_k,\qquad 0\le k\le m,\; N\ge k+m.
\end{align}

\begin{theorem}\label{TR1}
  Assume\/ $N\ge 2m$. Then the representation $\rho\restr{\lls_k(\DcNm)}$ 
of $\fS_N$
is irreducible
  for $k=0,\dots,m$, and these representations are distinct 
(i.e., non-isomorphic). Hence, \eqref{a10} yields the (unique) decomposition of
  $\llslek(\DcNm)$ into its irreducible components.
\end{theorem}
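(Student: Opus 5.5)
The proof will be by character computations. By \eqref{llD}, $\lls(\DcNm)$ is the permutation representation of $\fS_N$ on $\ZZ Nm$. Its character $\chi_m(\gs)$ is therefore the number of $m$-subsets of $[N]$ fixed by $\gs$. By \eqref{b5b}, valid since $N\ge 2m\ge k+m$, we have $\chi_{m,k}=\chix_k=\chi_k-\chi_{k-1}$. So it suffices to show that the class functions $\chix_0,\dots,\chix_m$ are orthonormal for the standard inner product $\innprod{\chi,\psi}=\frac1{N!}\sum_\gs\chi(\gs)\overline{\psi(\gs)}$. By \refApp{Arep}, $\innprod{\chix_k,\chix_k}=1$ gives irreducibility of $\rho\restr{\lls_k(\DcNm)}$, and $\innprod{\chix_j,\chix_k}=0$ for $j\ne k$ gives non-isomorphism. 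The uniqueness of the decomposition \eqref{a10} then follows from the standard uniqueness of isotypic decompositions, since each irreducible occurs with multiplicity one.

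The key computation is
\begin{align*}
\innprod{\chi_j,\chi_k}=\min(j,k)+1,\qquad 0\le j,k\le m,\ N\ge j+k.
\end{align*}
To prove it, use that $\chi_j\chi_k(\gs)$ counts pairs $(A,B)\in\ZZ Nj\times\ZZ Nk$ fixed by $\gs$, i.e.\ the fixed points of the product permutation action. By Burnside's lemma, $\innprod{\chi_j,\chi_k}$, which equals $\frac1{N!}\sum_\gs \chi_j\chi_k(\gs)$ since the characters are real, is the number of $\fS_N$-orbits on such pairs. An orbit is determined by $|A\cap B|=i$. The condition $N\ge j+k$ guarantees that every $i$ with $0\le i\le\min(j,k)$ actually occurs, because we need $j+k-i\le N$. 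Hence there are exactly $\min(j,k)+1$ orbits.

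Then bilinearity gives, for $j\le k$ (with $\chi_{-1}=0$),
\begin{align*}
\innprod{\chix_j,\chix_k}=\innprod{\chi_j,\chi_k}-\innprod{\chi_{j-1},\chi_k}-\innprod{\chi_j,\chi_{k-1}}+\innprod{\chi_{j-1},\chi_{k-1}}.
\end{align*}
All four indices satisfy the orbit count, because $j+k\le 2m\le N$. If $j<k$ the right side is $(j+1)-j-(j+1)+j=0$. If $j=k$ it is $(k+1)-k-k+k=1$. The boundary cases $j=0$ or $k=0$ reduce to the same arithmetic, since the terms involving $\chi_{-1}$ vanish. This establishes orthonormality.

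The main point needing care is bookkeeping rather than depth. First, \eqref{b5b} must apply for every $k\le m$, and $N\ge 2m$ ensures $N\ge k+m$. Second, the orbit count must hold for all index pairs used, which again needs $j+k\le 2m\le N$. Third, $\chi_{m,k}$ must be a genuine character of a nonzero space. This holds because a class function that is a character and has norm $1$ is automatically the character of an irreducible representation, in particular a nonzero one.
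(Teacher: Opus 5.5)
Your proposal is correct and follows the paper's proof essentially step by step. Like the paper, you compute $\innprod{\chi_j,\chi_k}=\min(j,k)+1$ for $j+k\le N$, then combine it with \eqref{b5b} and bilinearity to show that $\chix_0,\dots,\chix_m$ are orthonormal. The one cosmetic difference is that you get the inner product from Burnside's lemma, counting orbits of pairs $(A,B)$ classified by $|A\cap B|$, whereas the paper directly counts the permutations preserving $A\cap B$, $A\setminus B$, $B\setminus A$ and the complement; that count is just the orbit--stabilizer computation underlying Burnside.
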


\begin{proof}
Consider first $\lls(\DcNm)$.
As said in \eqref{llD}, we can identify $\lls(\DcNm)$ with the
space $\ell^2(\ZZ Nm)$ of all functions $\ZZ{N}m\to\bbC$, and since a
permutation $\gs\in \fS_N$ 
acts as a permutation on $\ZZ{N}m$ in the obvious way, it follows 
(see \refE{Echi})
that $\chi_{m}(\gs)$ equals
the number of subsets $A\subseteq N$ of size $m$ that are fixed by $\gs$,
i.e.,
\begin{align}\label{b2.5}
  \chi_m(\gs)=
\sum_{A\in\ZZ{N}m}\indic{\gs(A)=A}.
\end{align}
Hence, for any $k$ and $m\ge0$, 
\begin{align}\label{b3}
  \innprod{\chi_{k},\chi_{m}}&
:=\frac{1}{N!}\sum_{\rho\in \fS_N}\chi_{k}(\gs)
\overline{\chi_{m}(\gs)}
\notag\\&
=\frac{1}{N!}
\sum_{\gs\in \fS_N}\sum_{A\in\ZZ{N}k,\,B\in\ZZ{N}m}\indic{\gs(A)=A}\indic{\gs(B)=B}. 
\end{align}
We interchange the order of summation, and note that the product of
indicator functions is 1 if and only if $\gs$ maps the four disjoint sets
$A\cap B$, $A\setminus B$, $B\setminus A$, $[N]\setminus(A\cup B)$ into
themselves. Hence, if $|A\cap B|=j\le k\bmin m$, 
then the number of such $\gs$ is 
$j!\,(k-j)!\,(m-j)!\,(N-m-k+j)!$. Furthermore, for every $j\ge0$ with 
$j\le k$, $j\le m$, and $m+k-j\le N$, the number of such pairs $(A,B)$ of
sets is $\binom{N}{j,k-j,m-j,N-m-k+j}$.
Consequently, if we  assume $k\le m$,
then, since $N\ge2m\ge m+k$,
\begin{align}\label{b4}
&  \innprod{\chi_{k},\chi_{m}}
\notag\\&
=\frac{1}{N!}\sum_{j=0}^k\binom{N}{j,k-j,m-j,N-m-k+j}
j!\,(k-j)!\,(m-j)!\,(N-m-k+j)!
\notag\\&
=\sum_{j=0}^k1
=k+1.
\end{align}

It follows from \eqref{b4} and \eqref{b5b} that if $k,\ell\le m$, 
then
\begin{align}
  \innprod{\chix_k,\chix_\ell}&
=  \innprod{\chi_k,\chi_\ell} -   \innprod{\chi_k,\chi_{\ell-1}}
-   \innprod{\chi_{k-1},\chi_\ell}
+   \innprod{\chi_{k-1},\chi_{\ell-1}}
\notag\\&=
(k+1)\land(\ell+1) - (k+1)\land\ell-k\land(\ell+1)+k\land\ell
\notag\\&
=
  \begin{cases}
    0, &k\neq\ell,
\\
1, & k=\ell.
  \end{cases}
\end{align}
Hence, the characters $\chix_0,\dots,\chix_m$ are orthonormal, which shows
(using \eqref{b5b} again and \eqref{lr00a})
that the corresponding representations 
$\rho\restr{\lls_0(\DcNm)}, \dots,\rho\restr{\lls_m(\DcNm)}$ 
are irreducible and distinct.
(See \refApp{Arep}.)
\end{proof}

\begin{remark}\label{RYoung}
  It is well-known that the irreducible representations of $\fS_N$ are in 1--1
  correspondence with Young diagrams of size $N$, see for example
\cite[Lecture 4]{FultonHarris}. It can be shown that, for $N\ge m+k$,
the representation $\rho\restr{\lls_k(\DcNm)}$ corresponds to the Young diagram
$(N-k,k)$, see \refE{EYoung}.
\end{remark}

\begin{theorem}\label{TR2}
Assume $N\ge 2m$. Then the irreducible representation
$\rho\restr{\lls_m(\DcNm)}$ 
of $\fS_N$
appears exactly once as a component of $\rho\restr{\ell^2(\Nm)}$, and it
does not appear at all as a component of $\rho\restr{\ell^2(\Nk)}$ for any $k<m$.
\end{theorem}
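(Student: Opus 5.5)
The plan is to compute multiplicities by character inner products, exactly as in the proof of \refT{TR1}. By \eqref{b5b}, with $N\ge 2m$, the character of $\rho\restr{\lls_m(\DcNm)}$ is $\chix_m=\chi_m-\chi_{m-1}$. By \refE{Echi} (characters of permutation representations), the character $\psi_k$ of $\rho\restr{\ell^2(\Nk)}$ is the number of fixed points of $\gs$ acting on $\Nk$, that is, $\psi_k(\gs)=c_1(\gs)^k$, where $c_1(\gs)$ is the number of fixed points of $\gs$ in $[N]$. The multiplicity in question is therefore $\innprod{\psi_k,\chix_m}=\innprod{\psi_k,\chi_m}-\innprod{\psi_k,\chi_{m-1}}$, and we must show that this equals $1$ when $k=m$ and $0$ when $k<m$.

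To compute $\innprod{\psi_k,\chi_j}$ for $j\le m$, decompose $\Nk$ into $\fS_N$-orbits according to the equality pattern of the indices. Each orbit is determined by a set partition $\pi$ of $[k]$, and it is in bijection with the set of injections from the blocks of $\pi$ into $[N]$. Hence $\ell^2(\Nk)\cong\bigoplus_{\pi}\ell^2(D\comp_{N,|\pi|})$, summing over set partitions $\pi$ of $[k]$, and $\psi_k=\sum_{\ell}\stirling{k}{\ell}\gf_\ell$, where $\gf_\ell$ is the character of $\ell^2(D\comp_{N,\ell})$, i.e.\ the number of ordered $\ell$-tuples of distinct points fixed by $\gs$.

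Next, compute $\innprod{\gf_\ell,\chi_j}$ by Burnside-type counting, as in \eqref{b3}--\eqref{b4}. It equals the number of $\fS_N$-orbits on pairs consisting of an injective $\ell$-tuple and a $j$-set $B$. Such an orbit is determined by specifying which coordinates of the tuple lie in $B$, and these are realizable whenever $N\ge \ell+j$. The count is therefore $\sum_{i}\binom{\ell}{i}$, restricted to $i\le j$ and $\ell-i\le N-j$ (the latter automatic here). So
\[
\innprod{\gf_\ell,\chi_j}-\innprod{\gf_\ell,\chi_{j-1}}=\binom{\ell}{j},
\]
provided $N\ge \ell+j$, which holds because $\ell\le k\le m$, $j\le m$ and $N\ge 2m$.

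Combining the two steps,
\[
\innprod{\psi_k,\chix_m}=\sum_{\ell\le k}\stirling{k}{\ell}\binom{\ell}{m}.
\]
This vanishes for $k<m$, since then $\ell<m$ in every term. For $k=m$ only the term $\ell=m$ survives, giving $\stirling mm\binom mm=1$. This proves both claims.

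The main obstacle is bookkeeping rather than a deep step: the orbit decomposition of $\Nk$ must be stated correctly, and the condition $N\ge\ell+j$ must be checked so that every intersection pattern is realizable. Both are guaranteed by $N\ge 2m$. Alternatively, one could argue directly via \refR{RYoung}, but the character computation above stays within the tools already used in the paper.
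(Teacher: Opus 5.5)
Your proof is correct. The overall strategy is the paper's: the multiplicity is $\innprod{\tau^k,\chix_m}$, with $\chix_m=\chi_m-\chi_{m-1}$ from \eqref{b5b}. Where you differ is in how this inner product is evaluated.

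The paper keeps $\tau^k$ intact and splits the sum over $\gs$ instead. A $\gs$ with $\gs(A)=A$ for an $m$-set $A$ is a pair $(\gs_1,\gs_2)\in\fS_m\times\fS_{N-m}$, and $\tau(\gs)=\tau(\gs_1)+\tau(\gs_2)$ is expanded binomially. The moments of the number of fixed points are then identified as Bell numbers, via $x^k=\sum_\ell\stirling{k}{\ell}(x)_\ell$ and \eqref{c22}. This gives $\innprod{\tau^k,\chi_m}=\sum_j\binom{k}{j}B_jB_{k-j}$, which does not depend on $m$, so the difference vanishes for $k<m$. For $k=m$, the value $1$ comes from a boundary effect: on $\fS_{m-1}$ the $m$th moment is $B_m-1$ rather than $B_m$ (\eqref{c1c}, \eqref{c7}).

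You instead apply the Stirling expansion at the level of modules. You split $\ell^2(\Nk)$ into orbits indexed by set partitions of $[k]$, each isomorphic to $\ell^2(\DcNx{\ell})$ with character $(\tau)_\ell$. You then compute $\innprod{(\tau)_\ell,\chi_j}$ by Burnside counting on pairs, which is exactly the technique of \eqref{b3}--\eqref{b4} already used for \refT{TR1}.

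What your route buys:
\begin{itemize}
\item It gives an explicit multiplicity $\sum_\ell\stirling{k}{\ell}\binom{\ell}{m}$. This is valid for every $k$, not only $k\le m$, as long as $N\ge 2m$ and $N\ge k+m$.
\item It shows that the unique copy for $k=m$ sits in the off-diagonal summand $\ell^2(\DcNm)$, while the diagonal orbits contribute nothing.
\item It avoids Bell numbers and the truncation step \eqref{c1c}.
\end{itemize}
The paper's route, in exchange, yields the $m$-independent identity \eqref{c5} and the link with Poisson moments noted in \refR{Rpoisson}.
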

\begin{proof}
  Let us first compute the character $\chi_{\rho\restr{\ell^2(\Nk)}}$.
For $\gs\in \fS_N$, let $\tau(\gs)$ be the number of fixed points of $\gs$.
Since $\ell^2(\Nk)$ is the space of all functions on $[N]^k$, and $\fS_N$ acts by
permuting the elements of $[N]^k$, the character is 
(see again \refE{Echi}) 
given by the number of
fixed points of this action, which is $\tau(\gs)^k$. 
Thus
\begin{align}
  \chi_{\rho\restr{\ell^2(\Nk)}}(\gs)=\tau(\gs)^k,
\qquad \gs\in \fS_N
.\end{align}

We  compute the mean of this character, which we denote by
$M_k:=\innprod{\tau^k,1}$.
It is simpler to consider the descending factorials, denoted by
$(a)_\ell:=a(a-1)\dotsm(a-\ell+1)$. Then, 
for $\ell\le N$, 
with $\sumx$ denoting the sum over
distinct variables $x_1,\dots,x_\ell$ only and
by interchanging the order of summation,
\begin{align}\label{c2}
  \innprod{(\tau)_\ell,1}&
=\frac{1}{N!}\sum_{\gs\in  \fS_N}(\tau(\gs))_\ell
=\frac{1}{N!}\sum_{\gs\in  \fS_N}
\sumx_{x_1,\dots,x_\ell\in[N]}\indic{\gs(x_i)=x_i\;\forall i}
\notag\\&
=\frac{1}{N!}
\sumx_{x_1,\dots,x_\ell\in[N]}(N-\ell)!
=1.
\end{align}
(See also \refR{Rpoisson} below.)
If $\ell>N$, we instead trivially have $(\tau)_\ell=0$. Hence we conclude
\begin{align}\label{c22}
  \innprod{(\tau)_\ell,1}&
=
\begin{cases}
1, & \ell \le N,
\\
0, & \ell>N.                        
\end{cases}
\end{align}
We have $x^k=\sum_{\ell=0}^k \stirling {k}{\ell} (x)_\ell$, 
where $\stirling n\ell$ are the 
Stirling numbers of the second kind,  
see e.g.\ \cite[(6.10)]{CM} or \cite[1.(24d)]{StanleyI}. 
Hence, \eqref{c22} yields
\begin{align}\label{c1a}
M_k:=\innprod{\tau^k,1}
=\frac{1}{N!}\sum_{\gs\in \fS_N}\tau(\gs)^k
=\sum_{\ell=0}^k \stirling {k}{\ell}\innprod{(\tau)_\ell,1} 
=\sum_{\ell=0}^{k\bmin N} \stirling {k}{\ell}.
\end{align}
In particular, for $k\le N$,
\begin{align}\label{c1b}
M_k=\sum_{\ell=0}^{k} \stirling {k}{\ell}
=:B_k,
\qquad 0\le k\le N,
\end{align}
where $B_k$ is the $k$:th Bell number
\cite[Exercise 7.15]{CM},
see again \refR{Rpoisson}.
(We can take \eqref{c1b} as a definition
of $B_k$ and do not need to know its combinatorial meaning.)

Furthermore, for the case $k=N+1$,
 \eqref{c1a} yields
\begin{align}\label{c1c}
M_k=\sum_{\ell=0}^{k-1} \stirling {k}{\ell}
=B_k - \stirling{k}k
=B_k-1,
\qquad k=N+1.
\end{align}

Next, we consider $\innprod{\tau^k,\chi_m}$. We have, by \eqref{b2.5},
\begin{align}\label{c3}
  \innprod{\tau^k,\chi_m}
=\frac{1}{N!}\sum_{\gs\in \fS_N} \sum_{A\in\ZZ{N}m}\indic{\gs(A)=A} 
\tau(\gs)^k
\end{align}
For a given $A$, the permutations $\gs\in \fS_N$ that satisfy $\gs(A)=A$ 
consist of one permutation of $A$ and another of $[N]\setminus A$. 
Hence \eqref{c3} yields, by interchanging the order of summation,
\begin{align}\label{c4}
  \innprod{\tau^k,\chi_m}&
=\frac{1}{N!} \sum_{A\in\ZZ{N}m}\sum_{\gs_1\in \fS_m}\sum_{\gs_2\in \fS_{N-m}}
(\tau(\gs_1)+\tau(\gs_2))^k
\notag\\&
=\frac{1}{N!}\binom{N}{m}
\sum_{\gs_1\in \fS_m}\sum_{\gs_2\in \fS_{N-m}}
 \sum_{j=0}^k\binom kj\tau(\gs_1)^j\tau(\gs_2)^{k-j}
\notag\\&
= \sum_{j=0}^k\binom kj
\frac{1}{m!}\sum_{\gs_1\in \fS_m}\tau(\gs_1)^j
\frac{1}{(N-m)!}\sum_{\gs_2\in \fS_{N-m}}\tau(\gs_2)^{k-j}
.\end{align}
If $k\le m$ and $k\le N-m$, then
the two normalized inner sums on the last line of \eqref{c4}
are evaluated by \eqref{c1a}--\eqref{c1b} to $B_j$ and $B_{k-j}$, respectively.
Hence, \eqref{c4} yields
\begin{align}\label{c5}
  \innprod{\tau^k,\chi_m}&
= \sum_{j=0}^k\binom kj B_j B_{k-j},
\qquad 0\le k\le m, \;  N\ge k+m.
\end{align}
Note that the \rhs{} does not depend on $m$, subject to $k\le m\le N-k$.
Consequently, if $0\le k<m$ and $N\ge 2m$, 
so that \eqref{c5} applies both for $m$ and for $m-1$, 
then \eqref{b5b} and \eqref{c5}
yield
\begin{align}\label{c6}
    \innprod{\tau^k,\chix_m}&
=  \innprod{\tau^k,\chi_m} -   \innprod{\tau^k,\chi_{m-1}}
=0.
\end{align}
This yields the claim for $k<m$, since $\tau^k$ is the character of
$\rho\restr{\ell^2(\Nk)}$ 
and $\chix_m$ is the character of $\rho\restr{\lls_m(\DcNm)}$, which is irreducible by
\refT{TR1}.

If $k=m+1$ and $N\ge k+m=2k-1$, we obtain similarly from \eqref{c4},
\eqref{c1b}, and \eqref{c1c} (the latter for the case $j=k$),
using also $B_0=1$ (see \eqref{c1b}),
\begin{align}\label{c7}
  \innprod{\tau^k,\chi_{k-1}}&
= \sum_{j=0}^{k-1}\binom kj B_j B_{k-j} + \binom kk (B_k-1)B_0
= \sum_{j=0}^{k}\binom kj B_j B_{k-j} -1
.\end{align}
Consequently, assuming $N\ge 2m$, for $k=m$ 
we obtain instead of \eqref{c6},
from \eqref{c5} and \eqref{c7} (with $k$ replaced by $m$),
\begin{align}\label{c8}
    \innprod{\tau^m,\chix_m}&
=  \innprod{\tau^m,\chi_m} -   \innprod{\tau^m,\chi_{m-1}}
=1.
\end{align}
This shows that  the irreducible representation $\rho\restr{\lls_m(\DcNm)}$
appears exactly once in $\rho\restr{\ell^2(\Nm)}$, as claimed.
\end{proof}

\begin{remark}\label{Rpoisson}
 The formulas \eqref{c2} and \eqref{c1b} are well-known, and say
that the number of fixed points in a random permutation of $[N]$ has 
the same factorial moments and  moments up to order $N$ as a 
Poisson$(1)$ distribution; 
this implies the even more well-known fact that 
the distribution of
the number of fixed points is asymptotically Poisson$(1)$ as $\Ntoo$. 
\end{remark}

We already know that  $\rho\restr{\lls_m(\DcNm)}$
appears in $\rho\restr{\ell^2(\Nm)}$, since $\lls_m(\DcNm) \subseteq \ell^2(\Nm)$. The point of
\refT{TR2} is that this is the only way it appears in $\ell^2(\Nm)$.

\begin{corollary}\label{CR1}
  Let $m\ge1$ and $N\ge 2m$. Then the operator
\begin{align}\label{cr1}
    \Rchixm:=\frac{\dim(\lls_m(\DcNm))}{N!}\sum_{\gs\in \fS_N}\chix_m(\gs) \rho_\gs
  \end{align}
is the orthogonal projection of $\ell^2(\Nm))$ onto the subspace
$\lls_m(\DcNm)$. 
\end{corollary}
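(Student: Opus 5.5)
The plan is to invoke the standard formula for isotypic projections (see \refApp{Arep}): for a finite group $G$, a finite-dimensional unitary representation $\rho$ on $V$, and an irreducible character $\chi$ of degree $d$, the operator
\begin{align*}
\frac{d}{|G|}\sum_{g\in G}\overline{\chi(g)}\,\rho_g
\end{align*}
is the orthogonal projection of $V$ onto the isotypic component of type $\chi$. That component is the sum of all irreducible subspaces of $V$ that are isomorphic to the representation with character $\chi$.

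I will apply this with $G=\fS_N$, $V=\ell^2(\Nm)$ and $\chi=\chix_m$. By \refT{TR1} (valid since $N\ge 2m$), $\chix_m$ is the character of the irreducible representation $\rho\restr{\lls_m(\DcNm)}$, so its degree is $d=\chix_m(\mathrm{id})=\dim(\lls_m(\DcNm))$. Next I need $\overline{\chix_m(\gs)}=\chix_m(\gs)$. This holds because $\chix_m=\chi_m-\chi_{m-1}$ by \eqref{b5b}, and by \eqref{b2.5} each $\chi_j$ counts fixed subsets, so it is integer-valued and hence real. (Alternatively, all characters of $\fS_N$ are real since $\gs$ and $\gs\qw$ are conjugate.) Hence $\Rchixm$ in \eqref{cr1} is exactly the isotypic projection onto the $\chix_m$-component of $\ell^2(\Nm)$.

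It remains to identify that isotypic component with $\lls_m(\DcNm)$. By \refT{TR2}, the irreducible representation with character $\chix_m$ occurs exactly once in $\ell^2(\Nm)$; equivalently, $\innprod{\tau^m,\chix_m}=1$ by \eqref{c8}. Thus the isotypic component is a single irreducible subspace of dimension $d$. On the other hand, $\lls_m(\DcNm)\subseteq\ell^2(\Nm)$ is an invariant subspace on which $\rho$ has character $\chix_m$, so it lies inside the isotypic component. Comparing dimensions gives equality, and so $\Rchixm$ is the orthogonal projection onto $\lls_m(\DcNm)$.

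I expect the main work to be quoting the projection formula correctly: the normalization $d/|G|$, the conjugate on the character, and the fact that the resulting projection is orthogonal for unitary $\rho$, which holds because the operator is self-adjoint and idempotent. If the appendix states only the multiplicity-one case or only the version without the conjugate, I will use the reality of $\chix_m$ established above to bridge the gap. The identification step itself is immediate from \refT{TR2}.
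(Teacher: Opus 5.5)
Your proposal is correct and follows the paper's proof. Like the paper, you apply \refP{PA4} with $V=\ell^2(\Nm)$ and the irreducible subspace $W=\lls_m(\DcNm)$, use \refT{TR2} to rule out any other component of $\ell^2(\Nm)$ isomorphic to $W$, and drop the complex conjugate because $\chix_m$ is real by \eqref{b5b} and \eqref{b2.5}; your dimension comparison via the isotypic component is just a more explicit version of the paper's remark that one may take $U_1=W$ in the decomposition.
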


\begin{proof}
This is a special case of \refP{PA4}, with 
$V=\ell^2(\Nm)$ and $W=\lls_m(\DcNm)$;
since $W\subseteq V$ we may take $U_1=W$, and by \refT{TR2},
no other $U_i$ yields a representation $\rho\restr{U_i}$ 
isomorphic to $\rho\restr{W}$.
Note that $\chix_m$ is real (and integer-valued) as a consequence of 
\eqref{b5b} and \eqref{b2.5}, and thus we may ignore the complex conjugate
in \eqref{pa4}.
\end{proof}

We remark that \emph{any} character on the symmetric group $\fS_N$ is
real, and in fact integer-valued. (This does not hold for general groups.)

\subsection{Proof of \refT{T1}}\label{SSpf3}
We can now use the discrete results in \refS{Spf}
to prove the continuous analogue \refT{T1}.

Recall that we may identify $\ell^2(\Nm)$ with the subspace 
$\tell^2(\Nm)\subset L^2(\oi^m)$.
In this subsection we will freely use this identification; 
we identify stepfunctions constant on cubes $Q_{N,\bi}$ with functions on $\Nm$,
and we abandon the notation $\tell^2$.
The following lemma shows that then $\Phi_{m,k}$ defined in \eqref{Phi} and
its discrete version $\PhiN$ defined in \eqref{PhiN} ``almost'' agree; more
precisely, they agree on the set $\DcNm$
(or, regarded as functions in $L^2(\oi^m)$, on $\cDcNm$);
note that this set has measure $1-O(m^2/N)$. 

\begin{lemma}\label{LQK}
  Let $N\ge1$ and $0\le k\le m$.
If $f\in\lls(\Nk)\subseteq L^2(\oi^k)$, then $f\cdot\ettDcNk\in\lls(\DcNk)$ and
\begin{align}\label{qk1}
\Phi_{m,k}f\cdot\ettcDcNm
=
  \PhiN_{m,k}(f\cdot\ettDcNk)
\end{align}
as elements of $\lls(\Nm)\subseteq L^2(\oi^m)$.
\end{lemma}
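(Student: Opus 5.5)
The proof is a direct pointwise verification, carried out on each cube $Q_{N,\bi}$ separately. Both sides of \eqref{qk1} are step functions: $f$ is constant on each cube $Q_{N,\bj}$ in $\oi^k$, so $f(x_I)$ is constant on each $Q_{N,\bi}\subseteq\oi^m$. We therefore only have to compare the values of the two sides at an arbitrary index $\bi\in\Nm$, using the identification of $\ell^2(\Nm)$ with $\tell^2(\Nm)$.

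First, the claim that $f\cdot\ettDcNk\in\lls(\DcNk)$. The function $f$ is symmetric, and the set $\DcNk$ is invariant under permutations of coordinates. Hence the product is symmetric and vanishes on $\DNk$, which is exactly the definition \eqref{lo1} via \eqref{lo0}.

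Next, fix $\bi=(i_1,\dots,i_m)\in\Nm$ and split into two cases.

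\emph{Case $\bi\in\DNm$.} The left-hand side of \eqref{qk1} vanishes on $Q_{N,\bi}$ because of the factor $\ettcDcNm$. The right-hand side is zero there as well: $\PhiN_{m,k}$ is defined in \eqref{PhiN} only for distinct arguments and maps into $\lls(\DcNm)$, whose elements vanish on $\DNm$ by \eqref{lo0}.

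\emph{Case $\bi\in\DcNm$.} For $x\in Q_{N,\bi}$ and any $I\in\ZZ mk$, the point $x_I$ lies in the cube $Q_{N,\bi_I}$, where $\bi_I$ denotes the restriction of $\bi$ to $I$, in the notation \eqref{da3}. Thus $f(x_I)=f(\bi_I)$ as a function on $[N]^k$. Since the entries of $\bi$ are distinct, $\bi_I\in\DcNk$, so $\ettDcNk(\bi_I)=1$ and $f(\bi_I)=(f\cdot\ettDcNk)(\bi_I)$. Summing over $I\in\ZZ mk$, definition \eqref{Phi} gives
\[
(\Phi_{m,k}f)(x)=\sum_{I\in\ZZ mk}(f\cdot\ettDcNk)(\bi_I),
\]
and this is precisely $\PhiN_{m,k}(f\cdot\ettDcNk)(\bi)$ by \eqref{PhiN}.

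The two sides therefore agree on every cube, which proves \eqref{qk1}. No step is a genuine obstacle. The only point requiring care is bookkeeping: one must check that restricting a step-function index to the coordinates in $I$ matches the cube containing $x_I$, and that the boundary conventions (half-open intervals on $\oio$, with null sets ignored) cause no trouble.
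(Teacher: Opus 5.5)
Your proof is correct and follows the same route as the paper. The paper also gets $f\cdot\ettDcNk\in\lls(\DcNk)$ directly from \eqref{lo0}--\eqref{lo1}, then compares the two sides cube by cube: they agree on cubes $Q_{N,\bi}$ with $\bi\in\DcNm$, and both vanish on diagonal cubes. Your version is slightly more explicit in checking that $\bi_I\in\DcNk$ whenever $\bi\in\DcNm$.
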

\begin{proof}
  We have $f\cdot\ettDcNk\in\lls(\DcNk)$ by the definitions \eqref{lo0}--\eqref{lo1}.

By comparing \eqref{Phi} and \eqref{PhiN}, we see that for any
$\bi\in\DcNm$ and $x\in Q_{N,\bi}$ (defined in \eqref{QN}),
we have  
$\Phi_{m,k}f(x)=\PhiN_{m,k}(f\cdot\ettDcNk)(\bi)$.
Furthermore, by definition, 
$\PhiN_{m,k}(f\cdot\ettDcNk)(\bi)=0$ for $\bi\notin\DcNk$
(while $\Phi_{m,k}f(x)$ may be non-zero for $x\in Q_{N,\bi}$ in this case).
Hence, \eqref{qk1} holds.
\end{proof}

We continue with a lemma that contains most of the work in this subsection.

\begin{lemma}\label{LQL}
Let $m\ge0$ and let $f\in \Ls(\oi^m)\setminus \Lslex{m-1}(\oi^m)$.
Then, for every sufficiently large $N$ we have
\begin{align}\label{ql1}
  \lls_m(\DcNm)\subseteq\rhof.
\end{align}
\end{lemma}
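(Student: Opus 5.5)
The plan is to use \refL{L2} to bring the step-function approximations of $f$ into $\rhof$, and then to use the projection operator $\Rchixm$ from \refC{CR1}. Since $\rhof$ is a closed, $\GG$-invariant subspace containing $P_Nf$, it is in particular invariant under $\tfS_N$. Therefore $\rhof$ contains $\Rchixm P_N f$, which is a finite linear combination of the functions $\rho_\gs P_Nf$ with $\gs\in\tfS_N$. By \refC{CR1}, this function is the orthogonal projection of $P_Nf$ onto $\lls_m(\DcNm)$. By \refT{TR1}, $\lls_m(\DcNm)$ is irreducible for $\fS_N$ when $N\ge 2m$. So as soon as $g_N:=\Rchixm P_Nf\neq0$, the hull $\rhox{g_N}\subseteq\rhof$ is a nonzero invariant subspace of the irreducible space $\lls_m(\DcNm)$, and hence equals it. This gives \eqref{ql1}. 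The whole proof thus reduces to showing that $P_Nf$ is not orthogonal to $\lls_m(\DcNm)$ for all large $N$.

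To prove this non-orthogonality, write $f=f_m+f'$ with $f_m\in\Lsmx{m}$ and $f'\in\Lslex{m-1}$, using \eqref{d3}. The hypothesis $f\notin\Lslex{m-1}$ gives $f_m\neq0$. Since $f$ is symmetric and $P_N$ commutes with coordinate permutations, $P_Nf\in\lls(\Nm)$. Its restriction to the off-diagonal part lies in $\lls(\DcNm)$, and the diagonal part carries only $O(1/N)$ of the measure. I would show two things.
\begin{itemize}
\item[(a)] The distance from $P_Nf'\cdot\ettcDcNm$ to $\llslex{m-1}(\DcNm)$ tends to $0$.
\item[(b)] The projection of $P_Nf_m\cdot\ettcDcNm$ onto $\llslex{m-1}(\DcNm)$ tends to $0$.
\end{itemize}
Granted (a) and (b), the component of $P_Nf\cdot\ettcDcNm$ in $\lls_m(\DcNm)$ has norm tending to $\normm{f_m}>0$. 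The component of $P_Nf$ itself in $\lls_m(\DcNm)$ is the same, because $\lls_m(\DcNm)$ vanishes on the diagonal.

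For (a): by \refL{LPhi}, $f'=\Phi_{m,m-1}u$ with $u\in\Ls(\oi^{m-1})$. Approximate $u$ in $L^2$ by symmetric step functions $u_M\in\lls(\Nx{m-1})$ at a fixed level $M$; by \refL{LPhi}, $\Phi_{m,m-1}u_M$ is then close to $f'$. For $N$ a multiple of $M$ we have $u_M\in\lls(\Nx{m-1})$, and \refL{LQK} shows that $\Phi_{m,m-1}u_M\cdot\ettcDcNm=\PhiN_{m,m-1}(u_M\ettDcNx{m-1})$, which lies in $\llslex{m-1}(\DcNm)$. For general $N$, I would instead approximate $u$ by continuous functions and use $P_N$, which commutes suitably with $\Phi$ up to diagonal errors.

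For (b): elements of $\llslex{m-1}(\DcNm)$ have the form $\PhiN_{m,m-1}v$, and by \refL{LR0} we have $\normm{v}\le C\normm{\PhiN_{m,m-1}v}$. Extending $v$ by zero on the diagonal gives a function in $L^2(\oi^{m-1})$, and $\Phi_{m,m-1}v$ agrees with $\PhiN_{m,m-1}v$ off $\cDNm$. Then
\[
\innprod{P_Nf_m,\PhiN_{m,m-1}v}=\innprod{f_m,\Phi_{m,m-1}v}+\text{error},
\]
and the main term vanishes because $f_m\perp\Lslex{m-1}$. The error comes from the diagonal set, where $\Phi_{m,m-1}v$ is evaluated; it is bounded by $\normm{f_m\etta_{\cDNm}}\cdot C\normm{v}$, and $\normm{f_m\etta_{\cDNm}}\to0$ since $|\cDNm|\to0$. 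The uniform constant from \refL{LR0} is exactly what makes this step work.

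I expect the main obstacle to be the bookkeeping in (a) and (b), that is, controlling the diagonal contributions uniformly in $N$. The uniform bound \eqref{lr0} is the key input there.
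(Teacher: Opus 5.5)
Your proposal is correct and is essentially the paper's proof. The reduction to $\Rchixm P_Nf\neq0$ via \refL{L2}, \refC{CR1} and \refT{TR1} is the same, and your step (b) is the paper's key estimate (the uniform bound \eqref{lr0}, \refL{LQK}, $f_m\perp\Lslex{m-1}(\oi^m)$, $|\cDNm|\to0$); the paper runs it as a contradiction, pairing $f_m$ with $\PhiN_{m,m-1}F_N$ where $P_Nf\cdot\ettDcNm=\PhiN_{m,m-1}F_N$, so the $f'$-part never needs separate treatment. Your step (a) is in fact exact for every $N$, with no approximation or multiples of $M$ needed: $P_N\Phi_{m,m-1}=\Phi_{m,m-1}P_N$ (averaging a function of $x_I$ over product cubes is just averaging in $x_I$), so \refL{LQK} gives $P_Nf'\cdot\ettcDcNm=\PhiN_{m,m-1}(P_Nu\cdot\ettDcNmi)\in\llslex{m-1}(\DcNm)$.
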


\begin{proof}
The case $m=0$ is trivial (with $f$ constant), so we may assume $m\ge1$.
Fix $m\ge1$ and assume $N\ge2m$.
We let $C$ denote constants 
that may depend on $m$ but not on $N$;
these constants may be different at each occurrence.

By the definition of $\Ls_m(\oi^m)$ in \refSS{SS1}, or by \eqref{d3}, we have
$\Ls(\oi^m)=\Ls_m(\oi^m)\oplus\Lslex{m-1}(\oi^m)$. Hence,
$f=f_m+\fx$ for some $f_m\in\Ls_m(\oi^m)$ and $\fx\in\Lslex{m-1}(\oi^m)$.
By assumption, $f\notin\Lslex{m-1}(\oi^m)$, and thus $f_m\neq0$.

By \eqref{pn} and the symmetry of $f$,
we have
$P_Nf\in\lls(\Nm)$. 
Denote the operator $\Rchixm$ in \eqref{cr1} by $R_N$.
We have $P_Nf\in\rhof$ by \refL{L2}, and it follows by \eqref{cr1} that
also
\begin{align}\label{ql2}
  R_NP_N(f)\in\rhof.
\end{align}
We consider two cases.

\pfcase1{$R_N P_N f\neq0$.}
By \refC{CR1}, $R_NP_Nf\in\lls_m(\DcNm)$, and
by \refT{TR1}, the representation $\rho$ of $\fS_N$ on $\lls_m(\DcNm)$
is irreducible. 
Hence, 
\refP{PA1} shows that $\lls_m(\DcNm)=\rhox{R_NP_Nf}$.
Thus, the definition \eqref{hull} shows that linear combinations of
functions $\rho_g(R_N P_N f)$ with $g\in\fS_N$ are dense in $\lls_m(\DcNm)$.
It follows from \eqref{ql2} that every such linear combination is in $\rhof$,
and thus \eqref{ql1} holds.

\pfcase2{$R_N P_N f=0$.}
By \refC{CR1}, $R_N$ is the orthogonal projection onto $\lls_m(\DcNm)$.
Hence, we have $P_Nf\perp\lls_m(\DcNm)$.
It follows that for any $h\in\lls_m(\DcNm)$,
\begin{align}\label{qj2}
  0 = \innprod{P_N f,h} 
= \innprod{P_Nf,h\cdot\ettDcNm}
= \innprod{P_Nf\cdot \ettDcNm,h}
.\end{align}
Since $P_N f\cdot\ettDcNm\in \lls(\DcNm)$, this shows that, 
using \eqref{d2sN} and \eqref{ste12},
\begin{align}\label{qj3}
  P_Nf\cdot\ettDcNm\in\lls(\DcNm)\ominus\lls_m(\DcNm)
=\llslex{m-1}(\DcNm).
\end{align}
This means by the definition \eqref{a5} that there exists
$F_N\in\lls(\DcNx{m-1})$ such that 
\begin{align}\label{ql3}
  P_Nf\cdot\ettDcNm=\PhiN_{m,m-1}F_N.
\end{align}
Furthermore, by \refL{LR0},
we have
\begin{align}\label{ql4}
  \norm{F_N}_2 
\le C \norm{P_Nf\cdot\ettDcNm}_2
\le C\norm{P_Nf}_2
\le C\norm{f}_2
.\end{align}

By \eqref{ql3} and \refL{LQK}, we have, 
since $F_N\in\lls(\DcNx{m-1})$   and thus
$F_N=F_N\cdot\ettDcNmi$,
\begin{align}\label{ql5}
  P_Nf\cdot\ettDcNm
=\PhiN_{m,m-1}F_N
=\PhiN_{m,m-1}(F_N\cdot\ettDcNmi)
=\Phi_{m,m-1}F_N\cdot\ettcDcNm.
\end{align}
Furthermore, by \refL{LPhi},
$\Phi_{m,m-1}F_N\in \Ls_{m-1}(\oi^m)$, and thus $\Phi_{m,m-1}F_N\perp f_m$.
Consequently, using \eqref{ql5},
\begin{align}\label{ql6}
  0 &= \innprod{f_m,\Phi_{m,m-1}F_N}
= \innprod{f_m,\ettcDcNm\cdot\Phi_{m,m-1}F_N} 
+ \innprod{f_m,\ettcDNm\cdot\Phi_{m,m-1}F_N}
\notag\\&
= \innprod{f_m,\ettcDcNm\cdot P_Nf} 
+ \innprod{f_m,\ettcDNm\cdot\Phi_{m,m-1}F_N}
\notag\\&
= \innprod{f_m, P_Nf} 
- \innprod{\ettcDNm\cdot f_m, P_Nf} 
+ \innprod{\ettcDNm\cdot f_m,\Phi_{m,m-1}F_N}
.\end{align}
It is well-known that as \Ntoo, we have $P_N f\to f$ in $L^2(\oi^m)$.
Hence, 
\begin{align}\label{ql7}
\innprod{f_m, P_Nf}
\to
\innprod{f_m,f}
=\innprod{f_m,f_m}
=\norm{f_m}_2^2
>0.
\end{align}
Furthermore, $|\cDNm|\le \binom m2 N\qw\to0$ as \Ntoo, and thus
\begin{align}\label{ql8}
  \norm{\ettcDNm\cdot f_m}_2^2
=\int_{\cDNm}|f_m|^2 \to0.
\end{align}
The operator $P_N$ is an orthogonal projection and has norm 1, and
$\Phi_{m,m-1}$ in \eqref{Phi} is clearly bounded; thus
\eqref{ql4} implies
\begin{align}\label{qm1}
  \norm{-P_Nf + \Phi_{m,m-1}F_N}_2
\le \norm{f}_2+ C\norm{F_N}_2
\le C\norm{f}_2.
\end{align}
Consequently, we obtain by \eqref{ql8}, \eqref{qm1}, and the \CSineq, 
as \Ntoo,
\begin{align}\label{ql9}
 - \innprod{\ettcDNm\cdot f_m, P_Nf} 
+ \innprod{\ettcDNm\cdot f_m,\Phi_{m,m-1}F_N}
\to0.
\end{align}

We have shown in \eqref{ql7} and \eqref{ql9}
that as \Ntoo, the \rhs{} of \eqref{ql6} tends to 
$\norm{f_m}_2^2>0$.
This means that if $N$ is large enough, then \eqref{ql6} cannot hold.
This contradiction means that the assumption $R_NP_Nf=0$ then cannot hold;
in other words, if $N$ is large enough, then we must have $R_NP_Nf\neq0$
so Case 1 holds, and as shown above, then \eqref{ql1} holds.
\end{proof}

\begin{proof}[Proof of \refT{T1}]
We prove first that every closed invariant subspace
$M$ has a decomposition \eqref{t1}.
We use induction on $m$. The base case $m=0$ is trivial, so we may assume
$m\ge1$. 
Assume that $M$ is a closed $\rho$-invariant subspace of $\Ls(\oi^m)$.
We consider two cases.

\pfcase1{$M\subseteq\Lslex{m-1}(\oi^m)$}
By \refL{LPhi}, $\Phi_{m,m-1}$ is an isomorphism
$\Ls(\oi^{m-1})\to\Lslex{m-1}(\oi^m)$, and it follows from the definition
\eqref{Phi} that $\Phi_{m,m-1}$ intertwines the action $\rho$ of $\GG$ on
$L^2(\oi^{m-1})$ and $L^2(\oi^m)$.
Consequently, $\Phi_{m,m-1}\qw M$ is a closed $\rho$-invariant subspace of
$\Ls(\oi^{m-1})$, and by the induction hypothesis we have a decomposition
\begin{align}\label{t1q}
  \Phi_{m,m-1}\qw M 
=  \bigoplus_{k\in K} \Ls_k\bigpar{\oi^{m-1}}
\end{align}
for some set $K\subseteq\set{0,\dots,m-1}$.
We then obtain the desired decomposition \eqref{t1} by applying
$\Phi_{m,m-1}$ and using \refL{LPhi} again.

\pfcase2{$M\not\subseteq\Lslex{m-1}(\oi^m)$}
In this case there exists $f\in M\subseteq\Ls(\oi^m)$ such that
$f\notin\Lslex{m-1}(\oi^m)$. 
Since $M$ is invariant, we have $\rhof\subseteq M$, and thus \refL{LQL}
yields, for every sufficiently large $N$.
\begin{align}\label{qn1}
  \lls_m(\DcNm) \subseteq \rhof \subseteq M.
\end{align}

Consider now the orthogonal complement $M^\perp:=\Ls(\oi^m)\ominus M$.
This is also a closed $\rho$-invariant subspace of $\Ls(\oi^m)$, since $M$ is.
We have the same two cases for $M^\perp$.
If Case 1 holds, i.e., $M^\perp\subseteq \Lslex{m-1}(\oi^m)$,
then, as shown above, \eqref{t1} holds for $M^\perp$, i.e., 
\begin{align}\label{t1perp}
M^\perp=  \bigoplus_{k\in K^\perp} \Lsmk
\end{align}
for some $K^\perp\subseteq\set{0,1,\dots,m}$. 
This and \eqref{d3} (with $\ell=m$) imply that 
\eqref{t1} holds witk $K:=\set{0,\dots,m}\setminus K^\perp$.

Finally, suppose that Case 2 applies to both $M$ and $M^\perp$.
Then, for every large $N$, \eqref{qn1} holds (for some $f$), and also similarly
\begin{align}\label{qn3}
  \lls_m(\DcNm) \subseteq  M^\perp.
\end{align}
But this is absurd, since \eqref{qn1} and \eqref{qn3} imply that  
$\lls_m(\DcNm) \subseteq  M\cap M^\perp=\set0$.
Hence this case cannot occur, which completes the proof of \eqref{t1}.

Let $V_k:=\Ls_k(\oi^m)$, and 
suppose that $W$ is a closed invariant subspace of $V_k$
Then $W$ has a decomposition \eqref{t1}, and since $W\subseteq V_k$ it is
obvious that $K=\emptyset$ or $\set k$, and thus $W=\set0$ or $W=V_k$, which
shows that $\rho\restr V_k$ is irreducible.

Finally, suppose that two of these representations are
equivalent, say $\rho\restr V_i$ and $\rho\restr V_j$ with $0\le i<j\le m$.
Then there exists an invertible bounded linear  operator $T:V_i\to V_j$ such
that  
$T \rho_\gf = \rho_\gf T$ on $V_i$ for every $\gf\in\GG$.
This implies that $W:=\set{f+Tf:f\in V_i}$ is a closed invariant subspace of
$V_i\oplus V_j\subseteq \Ls(\oi^m)$. But $W\cap V_k=\set0$ for every $k$,
and thus $W$ cannot have a decomposition \eqref{t1}. This contradiction
shows that the representations are non-equivalent. 
\end{proof}

\appendix

\section{Group representations}\label{Arep}

In this appendix we collect for easy reference
some results on group representations that are used above. 
The results are known, 
and can be found in many textbooks, see for example
\cite{FultonHarris} and \cite{Serre}.

A \emph{representation} of a group $G$ on a vector space $V$
is a group homomorphism $\rho: g\mapsto\rho_g$
from $G$ to $\GL(V)$, the group of linear bijections (isomorphisms) $V\to V$.
In this paper, 
all representations are \emph{unitary}, 
which means that $V$ is a complex Hilbert space and
$\rho_g$ is a unitary operator in $V$ for every $g\in G$.
Furthermore, 
the vector spaces $V$ that we consider are 
always subspaces of $L^2(\cX,\mu)$ for some measure space $(\cX,\mu)$.
(Recall that $L^2(\cX,\mu)$ is the Hilbert space of square-integrable
complex-valued functions defined on $X$,
with functions that are a.e.\ equal identified.)
Note that we consider both finite-dimensional and infinite-dimensional Hilbert
spaces $V$.

Let $\rho$ be a unitary representation of $G$ on $V$.
We recall some notation from \refSS{SSnotF}.

A closed subspace $W\subseteq V$ is \emph{invariant} if $\rho_g(W)\subseteq W$
for every $g\in G$. (Then, in fact, $\rho_g(W)=W$.)
This means that the restriction of $\rho_g$ to $W$ defines a representation
$\rho\restr{W}$ of $G$ on $W$.

Given a representation $\rho$ of $G$ on $V$, and an element $f\in V$,
define
\begin{align}\label{hullA}
  \rhof := \text{the closed linear hull of }\set{\rho_g(f):g\in G}
\subseteq V.
\end{align}
It is easily seen that $\rhof$ is an invariant subspace of $V$; in fact, it
is the smallest invariant subspace that contains $f$.

A representation $\rho$  is \emph{irreducible} if the only closed invariant
subspaces are the trivial $\set{0}$ and $V$.
We record a well-known fact.
\begin{prop}\label{PA1}
  Let $\rho$ be a unitary representation of a group $G$ on a Hilbert space $V$.
Then the following are equivalent.
\begin{romenumerate}
  
\item \label{PA1a}
$\rho$ is irreducible.

\item \label{PA1b}
$\rhof=V$ for every $0\neq f\in V$.
\end{romenumerate}
\end{prop}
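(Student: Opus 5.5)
The plan is to prove the two implications separately. Both rest only on the definition \eqref{hullA} of $\rhof$ as the smallest closed invariant subspace containing $f$, and on the fact that orthogonal complements of invariant subspaces are invariant for unitary representations.

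For \ref{PA1a}$\implies$\ref{PA1b}, assume $\rho$ is irreducible and let $0\neq f\in V$. First I will check that $\rhof$ is a closed invariant subspace. Each $\rho_g$ maps the linear span of $\set{\rho_h(f):h\in G}$ into itself, since $\rho_g\rho_h=\rho_{gh}$. Each $\rho_g$ is also continuous, being unitary. Hence $\rho_g$ maps the closure of that span into the closure as well. Since $f=\rho_e(f)\in\rhof$, we have $\rhof\neq\set0$, and irreducibility forces $\rhof=V$.

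For \ref{PA1b}$\implies$\ref{PA1a}, let $W\subseteq V$ be a closed invariant subspace with $W\neq\set0$, and pick $0\neq f\in W$. Invariance gives $\rho_g(f)\in W$ for every $g$, so the linear span of the orbit lies in $W$. Because $W$ is closed, its closure $\rhof$ also lies in $W$. By \ref{PA1b}, $V=\rhof\subseteq W$, so $W=V$. Thus the only closed invariant subspaces are $\set0$ and $V$.

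No step is a real obstacle. The one point that needs care is that $\rhof$ is closed and invariant, which uses continuity of the $\rho_g$. Unitarity is not essential here, since boundedness of the operators suffices. Orthogonal complements are not needed for this equivalence; they would enter only in complete-reducibility statements elsewhere in the appendix.
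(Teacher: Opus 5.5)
Your proof is correct and follows the paper's argument: $f\in[\rho(f)]$ together with irreducibility gives (i)$\Rightarrow$(ii), and $[\rho(f)]\subseteq W$ for any nonzero $f$ in a nonzero closed invariant subspace $W$ gives (ii)$\Rightarrow$(i). You also verify that $[\rho(f)]$ is closed and invariant, which the paper takes as already noted, and your opening mention of orthogonal complements is unnecessary, as you yourself observe at the end.
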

\begin{proof}
As said above,
$\rhof$ is a closed invariant subspace for every
  $f\in V$.
Hence, if $\rho$ is irreducible and $f\neq0$, then $\rhof=V$.

Conversely, if 
$W\neq\set0$ is a closed invariant subspace,
let $0\neq f\in W$. Since $W$ is invariant and closed, 
it follows that $\rhof\subseteq W$.
Consequently, \ref{PA1b} implies $W=V$ and thus $\rho$ is irreducible.
\end{proof}

\subsection{Finite-dimensional representations of finite groups}

In this subsection, we assume that $\rho$ is a finite-dimensional
representations of a finite group $G$. 
(In this case, there is always an inner product on $V$ that makes the
representation unitary, so this can be assumed without loss of generality.
In our applications the representations are already unitary for the given inner
products.) 

If $\rho$ is a representation of a finite group $G$ 
on a finite-dimensional space $V$, 
its \emph{character}
$\chi_\rho$ is defined as the function $G\to\bbC$ given by the trace 
of $\rho$:
\begin{align}\label{chi}
  \chi_\rho(g):=\Tr(\rho_g),
\qquad g\in G.
\end{align}
Two representations are isomorphic if and only if they have the
same character.

We regard the characters as elements of $L^2(G)$, where $G$ is equipped with
the normalized uniform (Haar) measure (i.e., $1/|G|$ times counting measure).
Hence, for two characters $\rho_1$ and $\rho_2$, their inner product is
\begin{align}\label{a00}
  \innprod{\chi_{\rho_1},\chi_{\rho_2}}
:=\frac{1}{|G|}\sum_{g\in G}\chi_{\rho_1}(g)\overline{\chi_{\rho_2}(g)}
.\end{align}
\begin{prop}[{\cite[Th{\'e}or{\`e}mes 3 and 5]{Serre}}]
\label{PA3}
A finite-dimensional representation of a finite group $G$
is irreducible if and only if its character has
norm $1$, and  different (i.e., non-isomorphic)
irreducible representations of $G$ have orthogonal characters.  
\end{prop}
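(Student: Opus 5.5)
The plan is the standard route through Schur's lemma and the Schur orthogonality relations for matrix coefficients. Throughout, $\rho$ is a finite-dimensional unitary representation of the finite group $G$ on $V$. Decompose $V=\bigoplus_i V_i$ orthogonally into irreducible invariant subspaces, which is possible by complete reducibility: the orthogonal complement of an invariant subspace is invariant because the $\rho_g$ are unitary. Then $\chi_\rho=\sum_i\chi_{\rho|V_i}$, so both claims reduce to computing $\innprod{\chi_{\rho_1},\chi_{\rho_2}}$ for irreducible $\rho_1,\rho_2$.

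The key step is Schur's lemma and its averaging consequence. For irreducible $\rho_1$ on $V_1$ and $\rho_2$ on $V_2$, and any linear map $A:V_2\to V_1$, set
\begin{align*}
T_A:=\frac1{|G|}\sum_{g\in G}\rho_1(g)A\rho_2(g)\qw .
\end{align*}
This map intertwines $\rho_2$ and $\rho_1$. Its kernel and image are invariant subspaces, so $T_A$ is either $0$ or an isomorphism. If $\rho_1\not\cong\rho_2$, then $T_A=0$. If $\rho_1=\rho_2$, then $T_A$ has an eigenvalue $\lambda$, and $T_A-\lambda I$ is a non-invertible intertwiner, hence zero; so $T_A=\lambda I$. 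Taking traces gives $\lambda=\Tr(A)/\dim V_1$.

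Next, specialise $A$ to rank-one operators $e_j\innprod{\cdot,e'_l}$ built from orthonormal bases, and read off matrix entries. Using unitarity, $\rho(g)\qw=\rho(g)^*$. This yields
\begin{align*}
\frac1{|G|}\sum_g \rho_1(g)_{ij}\overline{\rho_2(g)_{kl}} =
\begin{cases}
0, & \rho_1\not\cong\rho_2,\\
\delta_{ik}\delta_{jl}/\dim V_1, & \rho_1=\rho_2.
\end{cases}
\end{align*}
Summing over $i=j$ and $k=l$ gives $\innprod{\chi_{\rho_1},\chi_{\rho_2}}=0$ for non-isomorphic irreducibles and $\innprod{\chi_{\rho_1},\chi_{\rho_1}}=\sum_{i,k}\delta_{ik}/\dim V_1=1$. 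Isomorphic representations have equal characters, since the trace is invariant under conjugation. This proves the orthogonality statement and the direction ``irreducible $\Rightarrow$ norm $1$''.

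For the converse, group the decomposition by isomorphism class, so that $\chi_\rho=\sum_\pi n_\pi\chi_\pi$ with non-isomorphic irreducibles $\pi$ and multiplicities $n_\pi\in\bbN$. Orthonormality gives $\norm{\chi_\rho}^2=\sum_\pi n_\pi^2$. This equals $1$ only if there is exactly one summand with multiplicity one, that is, only if $\rho$ is irreducible.

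The main obstacle is the careful derivation of the matrix-coefficient identity, in particular tracking conjugates and inverses under unitarity. The remaining steps are bookkeeping.
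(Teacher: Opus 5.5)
Your proof is correct. It is the standard argument from the source the paper cites (Serre, Th\'eor\`emes 3 and 5): Schur's lemma gives the orthogonality relations for matrix coefficients, hence orthonormality of irreducible characters, and the converse follows from $\lVert\chi_\rho\rVert^2=\sum_\pi n_\pi^2$; the paper itself offers no proof beyond that citation. Your standing unitarity assumption is harmless because, as the paper's appendix notes, every finite-dimensional representation of a finite group admits an invariant inner product, and the character does not depend on that choice.
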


\begin{prop}
[{\cite[Th{\'e}or{\`e}me 2]{Serre}; \cite[Corollary 1.6]{FultonHarris}}]
\label{PA5}
  Let $\rho$ be a representation of a finite group $G$ on a
  finite-dimen\-sional space $V$. 
Then there exists a decomposition $V=U_1\oplus\dotsm\oplus U_m$
of $V$ as a direct sum of invariant subspaces such
  that every $\rho\restr{U_i}$ is irreducible.
\end{prop}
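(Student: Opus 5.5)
The idea is to use the averaging argument behind Maschke's theorem. Since $G$ is finite, any invariant subspace of a finite-dimensional representation has an invariant complement. We then proceed by induction on $\dim V$.

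First, we make sure there is a $G$-invariant inner product on $V$. In our setting $\rho$ is already unitary for the given inner product. In general, starting from any inner product $(\cdot,\cdot)$, we define
\begin{align*}
\innprod{u,v}_G:=\frac1{|G|}\sum_{g\in G}(\rho_gu,\rho_gv).
\end{align*}
This is again positive definite, and it satisfies $\innprod{\rho_hu,\rho_hv}_G=\innprod{u,v}_G$ because $g\mapsto gh$ is a bijection of $G$. So we may assume that every $\rho_g$ is unitary.

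Next, the key observation: if $W\subseteq V$ is an invariant subspace, then so is $W^\perp$. Indeed, take $v\in W^\perp$, $w\in W$ and $g\in G$. Then
\begin{align*}
\innprod{\rho_gv,w}=\innprod{v,\rho_g^{-1}w}=\innprod{v,\rho_{g^{-1}}w}=0,
\end{align*}
since $\rho_{g^{-1}}w\in W$. Every subspace of a finite-dimensional space is closed, so $V=W\oplus W^\perp$ is an orthogonal decomposition into invariant subspaces.

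Now we run the induction on $\dim V$.
\begin{itemize}
\item If $\dim V=0$, take the empty decomposition, $m=0$. (Alternatively, start the induction at $\dim V=1$, which is trivially irreducible.)
\item If $V\neq\set0$ is irreducible, take $m=1$ and $U_1=V$.
\item Otherwise there is an invariant subspace $W$ with $\set0\neq W\neq V$. Then $W$ and $W^\perp$ are invariant, both have strictly smaller dimension, and the restrictions $\rho\restr{W}$ and $\rho\restr{W^\perp}$ are unitary representations. The induction hypothesis decomposes each into irreducible invariant pieces. Concatenating the two decompositions gives the required decomposition of $V$.
\end{itemize}

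There is no real obstacle. The only point needing care is the invariance of the orthogonal complement, which uses unitarity together with $\rho_g^{-1}=\rho_{g^{-1}}$. The averaged inner product makes that step available even when $\rho$ is not given as unitary.
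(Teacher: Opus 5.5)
Your argument is correct and is the standard one the paper relies on. The paper gives no proof of its own and only cites Serre and Fulton--Harris; your route (averaged invariant inner product, invariance of orthogonal complements, induction on $\dim V$) is exactly that of Fulton--Harris, Proposition~1.5 and Corollary~1.6, with Serre differing only in averaging a projection rather than the inner product. In the paper's unitary setting, your argument moreover gives a decomposition that is orthogonal for the given inner product, as the paper's notation $\oplus$ presupposes.
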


\begin{prop}
[{\cite[Th{\'e}or{\`e}me 8]{Serre}; \cite[(2.31) p 23]{FultonHarris}}]
\label{PA4}
  Let $\rho$ be a representation of a finite group $G$ on a
  finite-dimen\-sional space $V$. Suppose that $V=U_1\oplus\dotsm\oplus U_m$
  is a decomposition of $V$ as a direct sum of invariant subspaces such
  that every $\rho\restr{U_i}$ is irreducible.
Let $\qrho$ be an irreducible finite-dimensional representation of $G$
on a vector space $W$ and let $\qchi$ be its character.
Then the linear operator
\begin{align}\label{pa4}
  R_{\qchi}:=\frac{\dim(W)}{|G|}\sum_{g\in G}\overline{\qchi(g)} \rho_g
  \end{align}
is the projection of $V$ onto the subspace
consisting of the sum of all $U_i$ such that $\rho\restr{U_i}\cong \qrho$.
If the representation $\rho$ is unitary (as we assume above), then the projection
$R_\qchi$ is orthogonal.
\end{prop}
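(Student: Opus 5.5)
The statement to be proved is \refP{PA4}: $R_\qchi$ is the projection onto the $\qrho$-isotypic part of $V$, and it is orthogonal when $\rho$ is unitary. I would use the standard character-projection argument. By linearity of $R_\qchi$ and the direct sum decomposition $V=U_1\oplus\dotsm\oplus U_m$, it suffices to understand $R_\qchi$ on each invariant summand $U_i$. The operator $R_\qchi$ commutes with every $\rho_h$, because $\qchi$ is a class function. Indeed, $\rho_h R_\qchi \rho_h\qw=\frac{\dim W}{|G|}\sum_g\overline{\qchi(g)}\rho_{hgh\qw}=R_\qchi$ after the substitution $g\mapsto h\qw gh$. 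Hence $R_\qchi$ maps each $U_i$ into itself, and its restriction to $U_i$ is a $G$-endomorphism of the irreducible representation $\rho\restr{U_i}$. By Schur's lemma this restriction is therefore a scalar $\lambda_i I$.

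Next I would compute $\lambda_i$ by taking traces on $U_i$. Writing $d_i:=\dim U_i$ and $\chi_i$ for the character of $\rho\restr{U_i}$, we get
\begin{align*}
\lambda_i d_i=\Tr\bigpar{R_\qchi\restr{U_i}}
=\frac{\dim W}{|G|}\sum_{g\in G}\overline{\qchi(g)}\chi_i(g)
=\dim W\,\innprod{\chi_i,\qchi}.
\end{align*}
By \refP{PA3}, $\innprod{\chi_i,\qchi}$ equals $1$ if $\rho\restr{U_i}\cong\qrho$ (characters determine isomorphism class) and $0$ otherwise. In the first case $d_i=\dim W$, so $\lambda_i=1$; in the second case $\lambda_i=0$. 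Therefore $R_\qchi$ is the identity on the sum of those $U_i$ isomorphic to $\qrho$ and vanishes on the remaining $U_i$. This means it is the projection onto that sum along the others; in particular $R_\qchi^2=R_\qchi$.

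For orthogonality under unitarity, I would check that $R_\qchi$ is self-adjoint. Since $\rho_g^*=\rho_{g\qw}$ and $\overline{\qchi(g)}=\qchi(g\qw)$, the latter because the eigenvalues of $\qrho_g$ are roots of unity, we get
\begin{align*}
R_\qchi^*=\frac{\dim W}{|G|}\sum_g\qchi(g)\rho_{g\qw}
=\frac{\dim W}{|G|}\sum_g\overline{\qchi(g\qw)}\rho_{g\qw}=R_\qchi.
\end{align*}
A self-adjoint idempotent is an orthogonal projection, which proves the last claim.

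The main obstacle is Schur's lemma for possibly non-unitary complex representations. I would prove it by taking an eigenvalue $\lambda$ of the endomorphism $T$ (possible since we are over $\bbC$); then $\ker(T-\lambda)$ is a nonzero invariant subspace, so by irreducibility it is everything. Everything else in the argument is the bookkeeping above.
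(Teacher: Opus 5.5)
Your proof is correct. It is the standard argument of the references the paper cites (Serre, Th\'eor\`eme~8; Fulton--Harris (2.31)); the paper itself gives no proof beyond that citation. In that argument, a class-function average of the $\rho_g$ commutes with $G$, Schur's lemma makes it a scalar on each irreducible $U_i$, the trace together with \refP{PA3} fixes that scalar as $0$ or $1$, and your self-adjointness computation supplies the orthogonality the paper only asserts.

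One small slip: $R_{\qchi}(U_i)\subseteq U_i$ does not follow from the commutation relation, since an intertwiner need not preserve a given irreducible summand. It holds because $R_{\qchi}$ is a linear combination of the $\rho_g$, each of which preserves $U_i$.
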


\begin{example}\label{Echi}
  Let a group $G$ act on a finite set $S$ by permutations $\gs_g$, 
$g\in G$. This defines a representation $\rho$ of $G$ on $\ell^2(S)$ by
$\rho_g(f)(x):=f(\gs_g\qw(x))$ (cf.\ \eqref{rho}).
By considering the standard basis in $\ell^2(S)$, we see that the character
of this representation is given by
\begin{align}
  \chi_\rho(g) = |\set{x\in S: \gs_g(x)=x}|,
\end{align}
the number of fixed points of $\rho_g$.
\end{example}

\begin{example}\label{EYoung}
We give a simple proof that, as claimed in \refR{RYoung},
if $0\le k\le m$ and $N\ge m+k$, then
the  representation of $\fS_N$ 
given by $\rho\restr{\lls_k(\DcNm)}$ 
is the irreducible representation 
corresponding to the Young diagram (or the partition)
$(N-k,k)$. 
(This gives an alternative proof of \refT{TR1}.)

Let $\gs\in\fS_N$ and suppose that $\gs$ has $\njgs$ cycles of length $j$.
Thus $\sum_{j=1}^N j\njgs=N$.
A subset of $[N]$ is fixed by $\gs$ if and only if it is a union of cycles
of $\gs$; in other words, for every cycle we have the choice of either
including it or not. 
Hence, the generating function for the sizes of the fixed sets is
\begin{align}\label{k1}
  \sum_{A\subseteq[N]}\indic{\gs(A)=A} x^{|A|}
= \prod_{j=1}^N(1+x^j)^{\njgs}.
\end{align}
Let $[x^m]G(x)$ denote the coefficient of $x^m$ in a polynomial $G(x)$,
Then, by \eqref{b2.5} and \eqref{k1}, 
\begin{align}\label{k2}
  \chi_k(\gs) 
= [x^k]  \sum_{A\subseteq[N]}\indic{\gs(A)=A} x^{|A|}
= [x^k]\prod_{j=1}^N(1+x^j)^{\njgs},
\end{align}
and thus by \eqref{b5b}, 
\begin{align}\label{k3}
\chix_k(\gs)
=  \chi_k(\gs)-\chi_{k-1}(\gs) 
= [x^k]\biggpar{(1-x)\prod_{j=1}^N(1+x^j)^{\njgs}}.
\end{align}
We rewrite \eqref{k3} using homogeneous polynomials as
\begin{align}\label{k4}
\chix_k(\gs)
= [x_1^{N-k+1}x_2^k]\biggpar{(x_1-x_2)\prod_{j=1}^N(x_1^j+x_2^j)^{\njgs}},
\end{align}
which agrees with
the Frobenius character formula \cite[4.10 p.~49]{FultonHarris}
for the character defined by the Young diagram $(N-k,k)$
\end{example}

\begin{acks}
I thank Martin Herschend for enlightening discussions on group representations. 
\end{acks}

\newcommand\AAP{\emph{Adv. Appl. Probab.} }
\newcommand\JAP{\emph{J. Appl. Probab.} }
\newcommand\JAMS{\emph{J. \AMS} }
\newcommand\MAMS{\emph{Memoirs \AMS} }
\newcommand\PAMS{\emph{Proc. \AMS} }
\newcommand\TAMS{\emph{Trans. \AMS} }
\newcommand\AnnMS{\emph{Ann. Math. Statist.} }
\newcommand\AnnPr{\emph{Ann. Probab.} }
\newcommand\CPC{\emph{Combin. Probab. Comput.} }
\newcommand\JMAA{\emph{J. Math. Anal. Appl.} }
\newcommand\RSA{\emph{Random Structures Algorithms} }
\newcommand\DMTCS{\jour{Discr. Math. Theor. Comput. Sci.} }

\newcommand\AMS{Amer. Math. Soc.}
\newcommand\Springer{Springer-Verlag}
\newcommand\Wiley{Wiley}

\newcommand\vol{\textbf}
\newcommand\jour{\emph}
\newcommand\book{\emph}
\newcommand\inbook{\emph}
\def\no#1#2,{\unskip#2, no. #1,} 
\newcommand\toappear{\unskip, to appear}

\newcommand\arxiv[1]{\texttt{arXiv}:#1}
\newcommand\arXiv{\arxiv}

\newcommand\xand{and }
\renewcommand\xand{\& }

\def\nobibitem#1\par{}


\begin{thebibliography}{99}

\bibitem[Borgs, Chayes, \Lovasz, S\'os and Vesztergombi(2008)]{BCLSV1}
Christian Borgs, Jennifer T. Chayes, L{\'a}szl{\'o} \Lovasz, Vera T. S\'os
\& Katalin Vesztergombi. 
Convergent sequences of dense graphs I: Subgraph
frequencies, metric properties and testing,
\emph{Advances in Math.} {\bf 219} (2008), 1801--1851.

\bibitem[Borgs, Chayes, \Lovasz, S\'os and Vesztergombi(2012)]{BCLSV2}
Christian Borgs, Jennifer T. Chayes, L{\'a}szl{\'o} \Lovasz, Vera T. S\'os
\& Katalin Vesztergombi. 
Convergent sequences of dense graphs II. Multiway cuts and statistical
physics. 
\emph{Ann. of Math. (2)} \vol{176} (2012), no. 1, 151--219.

\bibitem[Chung and Graham(1992)]{ChungG}
Fan R. K. Chung \& Ronald L. Graham.
Maximum cuts and quasirandom graphs.  
\emph{Random graphs, Vol. 2 (Pozna\'n, 1989)},  23--33, 
Wiley, New York, 1992. 

\bibitem[Chung, Graham and Wilson(1989)]{ChungGW:quasi}
Fan R. K. Chung, Ronald L. Graham \&  Richard M. Wilson. 
Quasi-random graphs.  
\jour{Combinatorica}  \vol9  (1989),  no. 4, 345--362. 

\bibitem{FultonHarris}
William Fulton \& Joe Harris.
\emph{Representation Theory. A First Course.} 
Springer-Verlag, New York, 1991. 

\bibitem{CM}
Ronald L. Graham, Donald E. Knuth \& Oren Patashnik.
\emph{Concrete Mathematics}.
2nd ed, Addison-Wesley,  Reading, MA, 1994.

\bibitem[Hatami, Hatami and Li(2014)]{Hatami2Li}
Hamed  Hatami, Pooya Hatami \& Yaqiao  Li. 
A characterization of functions with vanishing averages over products of
disjoint sets. 
\emph{European J. Combin.} \vol{56} (2016), 81--93. 

\bibitem[Huang and Lee(2012)]{HuangLee}
Hao Huang \& Choongbum Lee.
Quasi-randomness of graph balanced cut properties.
\emph{Random Structures  Algorithms}
\vol{41} (2012), no. 1,  124--145.

\bibitem[Janson(2011)]{SJ234}
Svante Janson.
Quasi-random graphs and graph limits.
\emph{European J. Combin.} \vol{32} (2011), 1054--1083.

\bibitem[Janson and S{\'o}s(2015)]{SJ294}
Svante Janson \& Vera T. S{\'o}s.
More on quasi-random graphs, subgraph counts and graph limits.
\emph{European J. Combin.} \textbf{46} (2015), 134--160.

\bibitem[\Lovasz{}(2012)]{Lovasz}
L{\'a}szl{\'o} Lov{\'a}sz.
\emph{Large Networks and Graph Limits}.
American Mathematical Society, Providence, RI, 2012.

\bibitem[\Lovasz{} and Szegedy(2006)]{LSz}
L{\'a}szl{\'o} Lov{\'a}sz \& Bal{\'a}zs Szegedy.
Limits of dense graph sequences. 
\emph{J. Combin. Theory B} \vol{96} (2006), no. 6, 933--957.

\bibitem{MazurO}
 S. Mazur and W. Orlicz. 
Grundlegende Eigenschaften der polynomischen Operationen. 
\emph{Studia Math.} \vol5 (1934), 50--68.

\bibitem[Petrov(2013)]{Petrov}
Feodor Petrov.
General removal lemma.
Preprint, 2013.
\arxiv{1309.3795v1}

\bibitem{Serre}
Jean-Pierre Serre. 
\emph{Représentations linéaires des groupes finis}. 
2nd ed., 
Hermann, Paris, 1971. 

\bibitem[Shapira(2008)]{Shapira}
Asaf Shapira.
Quasi-randomness and the distribution of copies of a fixed graph. 
\emph{Combinatorica} \vol{28} (2008), 735--745. 


\bibitem[Shapira and Yuster(2010)]{ShapiraYuster}
Asaf Shapira \& Raphael Yuster.
The effect of induced subgraphs on quasi-randomness. 
\emph{Random Struct Algorithms} \vol{36} (2010), 90--109.


\bibitem[Shapira and Yuster(2012)]{ShapiraYuster:hyper}
Asaf Shapira \& Raphael Yuster.
The quasi-randomness of hypergraph cut properties. 
\emph{Random Structures Algorithms} \vol{40} (2012), no. 1, 105--131.

\bibitem[Simonovits and S\'os(1997)]{SS:nni}
Mikl{\'o}s Simonovits \& Vera T. S\'os.
Hereditarily extended properties, quasi-random graphs and not necessarily
induced subgraphs. 
\emph{Combinatorica} \vol{17} (1997), no. 4, 577--596. 

\bibitem[Simonovits and S\'os(2003)]{SS:ind}
Mikl{\'o}s Simonovits \& Vera T. S\'os.
Hereditary extended properties, quasi-random graphs and induced
subgraphs. 
\emph{Combin. Probab. Comput.}  \vol{12}  (2003),  no. 3, 319--344.

\bibitem{StanleyI}
Richard P. Stanley.
\emph{Enumerative Combinatorics, Volume I}.
Cambridge Univ. Press, Cambridge, 1997.

\bibitem{Thomas}
Erik G. F. Thomas.
A polarization identity for multilinear maps.
With an appendix by Tom H. Koornwinder.
\emph{Indag. Math. (N.S.)} \vol{25} (2014), no. 3, 468--474.

\bibitem[Thomason(1987)]{Thomason87a}
Andrew Thomason.
Pseudorandom graphs. 
\inbook{Random graphs '85 (Pozna\'n, 1985)}, 307--331, 
North-Holland, Amsterdam, 1987.

\bibitem[Thomason(1987)]{Thomason87b}
Andrew Thomason.  
Random graphs, strongly regular graphs and pseudorandom graphs. 
\inbook{Surveys in Combinatorics 1987 (New Cross, 1987)}, 173--195, 
London Math. Soc. Lecture Note Ser. \vol{123}, 
Cambridge Univ. Press, Cambridge, 1987. 

\bibitem[Yuster(2008)]{Yuster}
Raphael Yuster.
Quasi-randomness is determined by the distribution of copies of a
fixed graph in equicardinal large sets.
\inbook{Approximation, Randomization and Combinatorial Optimization},
596--601,
Lecture Notes in Comput. Sci. \vol{5171}, Springer, Berlin, 2008. 



\end{thebibliography}
\end{document}